\documentclass{article}

\usepackage[shortlabels]{enumitem}
\usepackage{amsfonts,amsmath, soul, matlab-prettifier, bm, bbm, amsthm,enumitem,amssymb,multirow,float,mathtools,bbm,array,varwidth,hyperref,bm}
\usepackage[all]{xy}
\usepackage[margin=0.9in]{geometry}
\usepackage{graphicx}
\usepackage{mathtools, caption, dsfont}
\usepackage{tikz}
\usepackage{amsmath}
\usetikzlibrary{arrows.meta, decorations.pathmorphing, calc}
\numberwithin{equation}{subsection}

\mathchardef\mhyphen="2D

\newcommand{\Hom}{\mathrm{Hom}}

\newcommand{\GL}{\mathrm{GL}}
\newcommand{\SL}{\mathrm{SL}}

\newcommand{\ind}{\mathrm{ind}}

\newcommand{\supp}{\mathrm{supp}}
\newcommand{\End}{\mathrm{End}}

\newcommand{\val}{\mathrm{val}}

\newcommand{\cO}{\mathcal{O}}
\newcommand{\cF}{\mathcal{F}}

\newcommand{\cH}{\mathcal{H}}

\newcommand{\cW}{\mathcal{W}}
\newcommand{\cA}{\mathcal{A}}
\newcommand{\cI}{\mathcal{I}}

\newcommand{\cV}{\mathcal{V}}
\newcommand{\cC}{\mathcal{C}}
\newcommand{\cG}{\mathcal{G}}

\newcommand{\cB}{\mathcal{B}}
\newcommand{\cS}{\mathcal{S}}

\newcommand{\CC}{\mathbb{C}}
\newcommand{\FF}{\mathbb{F}}

\newcommand{\RR}{\mathbb{R}}
\newcommand{\ZZ}{\mathbb{Z}}
\newcommand{\GG}{\mathbb{G}}

\newcommand{\TT}{\mathbb{T}}

\newcommand{\fg}{\mathfrak{g}}

\newcommand{\pair}[2]{\langle #1,#2 \rangle}

\newcommand{\der}{\mathrm{der}}
\newcommand{\cpder}{(G(k)_{x,+:1})_\der}
\newcommand{\ab}{\mathrm{ab}}
\newcommand{\ur}{\mathrm{ur}}

\theoremstyle{plain}
\newtheorem{thm}{Theorem}[subsection]

\newtheorem{prop}[thm]{Proposition}

\newtheorem{lemma}[thm]{Lemma}
\newtheorem{cor}[thm]{Corollary}

\theoremstyle{definition}
\newtheorem{defn}[thm]{Definition}
\newtheorem{notn}[thm]{Notation}
\newtheorem{rem}[thm]{Remark}
\newtheorem{example}[thm]{Example}

\makeatletter
\let\c@equation\c@thm
\makeatother

\title{Exceptional supercuspidal representations in small residue characteristic}
\author{Yiannis Fam}

\begin{document}
	\maketitle
	\thispagestyle{empty}
	\pagenumbering{arabic}
	\thispagestyle{empty}

	\setcounter{page}{1}

	\begin{abstract}
		In this paper, in residue characteristic 2 and 3, we extend the construction of epipelagic representations of Reeder--Yu to produce new supercuspidals of higher depth, building on work of Gastineau. In particular, we produce examples of epipelagic representations that do not arise from the construction of Reeder--Yu.
	\end{abstract}

\section{Introduction}
Let $k$ be a non-archimedean local field with residue characteristic $p$, residue field $\FF_q$, ring of integers $\cO_k$ and uniformiser $\varpi$. Let $G$ be a connected reductive group over $k$, and assume that $G$ splits over a tamely ramified extension of $k$. By a representation of $G(k)$ we mean a smooth complex representation. This paper focuses on the construction of supercuspidal representations of $G(k)$ when $p$ is small. 

The construction of supercuspidal representations of $\GL_n(k)$ and $\SL_n(k)$ was completed by Bushnell--Kutzko in \cite{BK93} and \cite{BK94}. The case of inner forms of $\GL_n(k)$ was completed by S{\'e}cherre--Stevens \cite{SS08}. In \cite{Ste08}, Stevens constructs all supercuspidal representations when $p>2$ and $G$ is a classical group. For general $G$, by work of Kim \cite{Kim07} and Fintzen \cite{Fin21}, Yu's construction \cite{Yu01} of supercuspidal representations is exhaustive when $p$ is sufficiently large; \cite{Fin21} shows that it suffices to take $p$ not dividing the order of the Weyl group of $G$. Yu's construction exists when $p \neq 2$, and has recently been extended to the $p=2$ case by \cite{FS25}. One is then interested in constructing, when $p$ is small, irreducible supercuspidal representations of $G(k)$ that are not expected to arise from Yu's construction. The cases when $p=2$ or $G$ is an exceptional group are particularly interesting.

The case of depth-zero supercuspidal representations has been completed by Moy--Prasad \cite{MP1} \cite{MP2}, and separately by Morris \cite{Mor99}. In positive depth there exist constructions, uniform in the residue characteristic $p$, due to Gross--Reeder \cite{GR10} and Reeder--Yu \cite{RY14} of supercuspidal representations that do not necessarily arise from the construction of Yu (or \cite{FS25}). For the rest of this paper we assume that $G$ is a simple split reductive group over $k$ (although \cite{RY14} works in greater generality). We will, for convenience, label the first three terms of the Moy--Prasad filtration of $G(k)$ at a point $x$ in the building $\cB(G,k)$ of $G$ by $G(k)_{x,0}$, $G(k)_{x,+}$ and $G(k)_{x,++}$. Let $\GG_x/\FF_q$ denote the reductive quotient of $G$ at $x$ so that $G(k)_{x,0}/G(k)_{x,0+} \cong \GG_x(\FF_q)$.

The epipelagic representations constructed in \cite{RY14} take as input a point $x \in \cB(G,k)$, a linear functional $\lambda$ on the $\FF_q$-vector space $G(k)_{x,+}/G(k)_{x,++}$, and a fixed nontrivial additive character $\chi: \FF_q \to \CC^\times$. When $\lambda$ is stable under the action of $\GG_x$ in the sense of Geometric Invariant Theory \cite{Mum77}, the compactly induced representation $\ind_{G(k)_{x,+}}^{G(k)} \chi \circ \lambda$ is a finite direct sum of irreducible supercuspidal representations of $G(k)$. These irreducible constituents are examples of epipelagic representations, and may also be realised as the compact induction of a representation of a slightly larger compact open subgroup than $G(k)_{x,+}$. When specialising to the case that $x$ is the barycentre of an alcove, one recovers the simple supercuspidal representations of \cite{GR10}. The corresponding condition on $\lambda$ in this setting is more elementary to describe. Outside of this case the stable functionals $\lambda$ have not been classified in general. When $p$ is large, the stability condition may be transferred to a stability condition in Vinberg--Levy theory \cite{RY14} where \cite{RLYG12} applies. The assumption on $p$ was later removed by \cite{FR17}. The points $x$ for which $G(k)_{x,+}/G(k)_{x,++}$ admits stable functionals have been classified in \cite[Corollary 5.1]{RY14}. Reeder--Yu find that these stability points are rather rare; when $G=\SL_n$ the only stable point is the barycentre of an alcove.

In Section 2, to extend the above construction to higher depth we consider the compact induction of characters of $G(k)_{x,+}$ that do not factor through $G(k)_{x,+}/G(k)_{x,++}$. In order to do so, we must explicitly describe the abelianisation of $G(k)_{x,+}$. We show that this abelianisation is a quotient of $G(k)_{x,+}/G(k)_{x,1+}$ when $p>2$. To make calculation more manageable, we only work with the smaller quotient $G(k)_{x,+}/G(k)_{x,1}$. In this section we describe an algorithm that computes the abelianisation $V(x)$ of $G(k)_{x,+}/G(k)_{x,1}$ when $x$ is the barycentre of a facet. When $p>3$, we show that this abelianisation is $G(k)_{x,+}/G(k)_{x,++}$ and we are constrained to the setting of \cite{RY14}. When $p=2$ or $3$ we find that $V(x)$ may be larger than $G(k)_{x,+}/G(k)_{x,++}$, but still always admits the structure of an $\FF_q$-vector space. The question of describing such characters of $G(k)_{x,+}$ was previously considered by \cite{Gas20}. Taking $q=2$, $G=\mathrm{Sp}_4$ and $x$ the barycentre of an alcove, Gastineau produces a character $\rho: \mathrm{Sp}_4(k)_{x,+} \to \CC^\times$ that does not factor through $\mathrm{Sp}_4(k)_{x,+}/\mathrm{Sp}_4(k)_{x,++}$, such that $\ind_{\mathrm{Sp}_4(k)_{x,+}}^{\mathrm{Sp}_4(k)} \rho$ is an irreducible supercuspidal representation.

We now describe our main result. In Section 3, we review the construction of supercuspidal representations due to \cite{RY14} and verify that they are epipelagic. In Definition \ref{def:hm} we define a weaker notion of stability than the notion from Geometric Invariant Theory, that of $\FF_q$-stability. In short, this asks for the Hilbert--Mumford criterion (\cite{Mum77}) to hold for the action of any one parameter family $\GG_m \to \GG_x$, defined over $\FF_q$, on $V(x)^\vee$. We prove the following:

\begin{thm}[Theorem \ref{thm:main}]
    Fix a nontrivial additive character $\chi: \FF_q \to \CC^\times$. Let $\lambda \in V(x)^\vee$ be an $\FF_q$-stable functional and let $\chi_\lambda$ denote the inflation of $\chi \circ \lambda$ to a character of $G(k)_{x,+}$. The compactly induced representation $$\pi_\lambda = \ind_{G(k)_{x,+}}^{G(k)} \chi_\lambda$$ is a finite direct sum of irreducible supercuspidal representations of $G(k)$.
\end{thm}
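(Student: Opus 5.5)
The proof follows the Reeder--Yu template, using the standard criterion: if $K$ is compact open and $\chi_\lambda$ is a character of $K$ such that the intertwining set $\{g \in G(k) : \Hom_{K \cap gKg^{-1}}(\chi_\lambda, {}^g\chi_\lambda) \neq 0\}$ is contained in a compact set modulo nothing (here $G$ is simple, so the centre is finite), then $\ind_K^{G(k)} \chi_\lambda$ is a finite direct sum of irreducible supercuspidals. Finiteness comes from the finite length of $\ind_{K}^{J}\chi_\lambda$, where $J$ is the normaliser or stabiliser of $\chi_\lambda$ in $G(k)_{x,0}$. So it suffices to show the following: if $g \in G(k)$ intertwines $\chi_\lambda$, then $g \in G(k)_{x,0}$, or at least $g$ lies in a fixed compact set.

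By the affine Bruhat decomposition, write $g = k_1 t k_2$ with $k_i \in G(k)_{x,0}$ and $t$ in the extended affine Weyl group (or in $T(k)$ modulo $T(k)_0$). Since $G(k)_{x,0}$ normalises $G(k)_{x,+}$ and acts on $V(x)^\vee$ through $\GG_x(\FF_q)$, we may replace $\lambda$ by a $\GG_x(\FF_q)$-conjugate. $\FF_q$-stability is invariant under this action. We are therefore reduced to $g = t$ a translation part $\varpi^{\check\mu}$ (times a finite Weyl element) with $\check\mu$ nonzero, and we must derive a contradiction. Following \cite{RY14}, the key point is that $t$ intertwining $\chi_\lambda$ forces $\chi_\lambda$ to be trivial on $G(k)_{x,+} \cap t^{-1}G(k)_{x,++}t$, or more precisely on the image in $V(x)$ of $G(k)_{x,+} \cap t^{-1} \ker(\chi_\lambda) t$. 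This image should contain the span of the "positive-weight" part $V(x)_{\check\mu>0}$ for the cocharacter $\check\mu$ viewed as a one-parameter subgroup $\GG_m \to \GG_x$ defined over $\FF_q$. Here one uses that $t$ contracts the root subgroups $U_\psi$ with $\psi(x)>0$, $\langle \psi,\check\mu\rangle>0$, into deeper filtration levels (at least past $G(k)_{x,1}$, or into the kernel), where $\chi_\lambda$ is trivial.

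Consequently $\lambda$ vanishes on all nonnegative-weight (or all positive-weight) spaces for some nontrivial $\FF_q$-rational cocharacter. By Definition \ref{def:hm} this means $\lambda$ is unstable, a contradiction. Hence the intertwining is contained in $G(k)_{x,0}$, and the induction decomposes into finitely many irreducible supercuspidals.

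The main obstacle is the step identifying the image of $G(k)_{x,+} \cap t^{-1}G(k)_{x,+}t$ in $V(x)$ with the weight spaces of $\check\mu$. Because $V(x)$ is the abelianisation of $G(k)_{x,+}/G(k)_{x,1}$ rather than a graded piece of the Moy--Prasad filtration, elements of $V(x)$ are not simply root-subgroup coordinates. Commutator relations in residue characteristic $2$ and $3$ produce extra classes in the abelianisation. I would handle this using the explicit algorithm description of $V(x)$ from Section 2. One checks that $V(x)$ is spanned by images of affine root subgroups $U_\psi$ with $0<\psi(x)<1$, which is a $T$-stable decomposition. Each such image carries the $\GG_m$-weight $\langle\psi,\check\mu\rangle$, so the weight decomposition of $V(x)$ is compatible with these root images. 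With that in hand, the contraction argument goes through root by root. A secondary issue is handling Weyl-group parts of $t$ that do not fix $x$; these should be absorbed by conjugating into $G(k)_{x,0}$ and using the normaliser.
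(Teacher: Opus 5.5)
There is a genuine gap at the central step. You try to show that no $g$ whose Iwahori--Weyl component has nonzero translation part can intertwine $\chi_\lambda$, and conclude that the intertwining lies in $G(k)_{x,0}$. That conclusion is false in general. In the $B_n$, $q=2$ example (Proposition~\ref{prop:BnI}), lifts of $w_{\alpha_n}$ intertwine $\chi_\lambda$, yet they lie neither in the Iwahori subgroup $G(k)_{x,0}$ nor in the stabiliser of $x$. For the adjoint group $\mathrm{SO}_{2n+1}$, the element $\sigma$ stabilising $\cC$ also intertwines; it fixes $x$ but has nonzero translation part.

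The argument breaks at the step you wave through. The finite Weyl part of $w=t_{\check\mu}\bar w$ cannot be absorbed into $G(k)_{x,0}$, because its lifts lie in the hyperspecial parahoric at $x_0$, not at $x$. Once that part is present, intertwining by $a^{-1}nb$ only tells you that $\chi_{b\cdot\lambda}$ is trivial on $U_\psi$ whenever $\psi(x)>0$ and $\psi(x)+\langle\dot\psi,\gamma\rangle\ge 1$. Here $\gamma=w^{-1}(x)-x\in X_*(T)\otimes\RR$ is a real displacement, not a cocharacter. Note also that the relevant kernel is $G(k)_{x,1}$, not $G(k)_{x,++}$, since $\chi_\lambda$ need not be trivial on $G(k)_{x,++}$.

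When $\gamma$ is small or zero, this condition imposes nothing, so the sign condition of Definition~\ref{def:hm} gives no contradiction. Your contraction argument is valid only for pure translations $\check\mu(\varpi)$. The missing idea is to use $\FF_q$-stability quantitatively. It makes $\{\gamma : \langle\dot\psi,\gamma\rangle\le r \text{ for all } \psi\in\supp(b\cdot\lambda)\}$ compact (Lemma~\ref{lem:lin_alg}). Since $W(G)\cdot x$ is discrete with finite stabilisers, only finitely many $w$ survive. What one proves is finiteness of the bad double cosets, not their absence.

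The final step is also not justified as written. Getting finiteness from $\ind_K^J\chi_\lambda$, with $J$ the stabiliser of $\chi_\lambda$ as in Lemma~\ref{lem:mackey}, requires $I(G,G(k)_{x,+},\chi_\lambda)=J$, and this fails in the example above. The paper does not pass through intertwining at all. It runs the compactness argument at every depth $r>0$ (Proposition~\ref{prop:finite_intertwine}), using Proposition~\ref{prop:bruhat} and replacing $\lambda$ by $b\cdot\lambda$, which is still $\FF_q$-stable. This shows that only finitely many $g\in G(k)_{x,r}\backslash G(k)/G(k)_{x,+}$ have $\lambda$ trivial on $G(k)_{x,+}\cap g^{-1}G(k)_{x,r}g$. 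By Mackey theory this is admissibility of $\pi_\lambda$ (Corollary~\ref{cor:admissible}), and \cite[Theorem 1]{Bus90} then gives the finite decomposition into irreducible supercuspidals.

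Finally, what you call the main obstacle is not one. $\FF_q$-stability is phrased via supports, and the argument only uses that if $\chi_{b\cdot\lambda}$ is trivial on a subgroup containing $U_\psi$, then $\psi\notin\supp(b\cdot\lambda)$. No weight decomposition of $V(x)$ is needed. In general none compatible with root subgroups exists, since tuples in $\cS(x)$ identify lines with different gradients.
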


In contrast with \cite{RY14}, our method does not generally give us an explicit description of the irreducible constituents of $\pi_\lambda$ as compactly induced representations.

In Section 4, we give examples of supercuspidal representations using Theorem \ref{thm:main}. These examples have the following features:
\begin{itemize}
    \item For $G=G_2$, $q=2$ and $x$ the barycentre of an alcove, we produce two irreducible supercuspidal representations of $G(k)$. We exhibit an isomorphism between these and epipelagic supercuspidal representations constructed by \cite[Section 7.5]{RY14} from a \textit{different} point in the building.
    %\item For $G=F_4$ and $p=2$ we produce a representation $\pi_\lambda$ that is a finite direct sum of irreducible supercuspidals of $G(k)$. Each summand has depth at least $1/8$ and at least one is epipelagic of depth $1/8$. 
    \item For $G$ of type $B_n$, $q=2$ and $x$ the barycentre of an alcove, we produce irreducible supercuspidal representations of $G(k)$, of which at least one does not arise from the construction of \cite{RY14} by depth considerations. However, we show how these may be obtained from the setting of Reeder--Yu using our weaker notion of $\FF_q$-stability.
    \item For $G=G_2$ and $p=3$ we produce an irreducible supercuspidal representation of $G(k)$ that is epipelagic of depth $1/2$ and does not arise from the construction of \cite{RY14} by formal degree considerations.
\end{itemize}

In particular, the latter two give examples of epipelagic representations that do not arise from the construction of \cite{RY14}.
% the existence of which was not previously known to the knowledge of the author.

\subsection{Acknowledgements}

It is a pleasure to thank my supervisor Beth Romano for her invaluable support and guidance in the writing of this paper and shaping of this project. I would also like to thank Ana Caraiani, Jessica Fintzen, Wee Teck Gan and Shaun Stevens for helpful comments, corrections and questions. This work was supported by
the Engineering and Physical Sciences Research Council [EP/S021590/1], the EPSRC Centre
for Doctoral Training in Geometry and Number Theory (The London School of Geometry and
Number Theory) at Imperial College London.

\subsection{Notation and Conventions}

Let $r+$ be a set of symbols indexed by $r \in \RR$ and let $\tilde{\RR}$ denote $\RR \sqcup \{r+ : r \in \RR\}$ with a total ordering defined by the following property: if $r<s$ are real numbers then $$r<r+<s<s+.$$

We write the commutator of two elements $a,b$ of a group to be $[a,b]=aba^{-1}b^{-1}$.

\section{Characters of some compact open subgroups}
In this section we classify characters of certain compact open subgroups of $G(k)$, specifically the first positively indexed subgroup in the Moy--Prasad filtration of $G(k)$ corresponding to a barycentre of a facet in the Bruhat--Tits building of $G(k)$.

\subsection{Notation and review of Bruhat--Tits theory}

Let $k$ be a non-archimedean local field with residue characteristic $p$, residue field $\FF_q$, ring of integers $\cO_k$ and uniformiser $\varpi$. Let $k^\ur$ be the maximal unramified extension of $k$ in a fixed algebraic closure $\overline{k}$. Let $\cO$ denote the ring of integers of $k^\ur$ and identify the residue field with $\overline \FF_q$.

Let $G$ be a simple split reductive group over $k$ with split $k$-rational maximal torus $T$. Denote the centre of $G$ by $Z$. Let $X_*(T) = \Hom_k(\GG_m,T)$ and $X^*(T) = \Hom_k(T,\GG_m)$ be the cocharacter and character lattices respectively of $T$. Let $\langle,\rangle: X^*(T) \times X_*(T) \to \ZZ$ be the natural pairing. We reuse this notation for the pairing $X^*(T)  \times X_*(T) \otimes \RR \to \RR$. Let $\Phi$ be the set of roots with respect to $T$, and for each $\alpha \in \Phi$ let $U_\alpha$ denote the corresponding root subgroup of $G$. Fix isomorphisms $u_\alpha: \GG_a \xrightarrow{\sim} U_\alpha$, defining a Chevalley basis $e_\alpha = du_\alpha(1) \in \mathrm{Lie}(U_\alpha)$ of $\fg := \mathrm{Lie}(G)$.

Let $\cA= \cA(T,k)$ denote the apartment of $T$ over $k$ and $\cB = \cB(G,k)$ denote the Bruhat--Tits building of $G$ over $k$. Our choice of Chevalley basis determines a unique point in $\cA$ fixed by $u_\alpha(-1)u_{-\alpha}(1)u_\alpha(-1)$ for all $\alpha \in \Phi$. Taking this hyperspecial point $x_0$ as the origin, we identify $\cA$ with $X_*(T) \otimes \RR$.

Let $\Psi$ be the set of affine roots with respect to $T$. These are the affine functions on $X_*(T) \otimes \RR$ of the form $\alpha+n$ for $\alpha$ a root and $n$ an integer. The gradient of an affine root $\psi =\alpha+n$ is defined to be $\alpha \in \Phi$ and we denote this by $\dot\psi$. For $\psi=\alpha+n$ let the affine root group $U_\psi$ denote $u_\alpha(\varpi^n \cO_k) \leq U_\alpha$. We have an isomorphism $u_\psi: \cO_k \xrightarrow{\sim} U_\psi$ of $\cO_k$-modules determined by sending 1 to $u_\alpha(\varpi^n)$. For any affine root $\psi$, denote by $V_\psi$ the quotient $U_\psi/U_{\psi+1}$ isomorphic to $\FF_q$ as an abelian group. Let $\overline{u}_\psi: \cO_k/\varpi \cO_k \xrightarrow{\sim} V_\psi$ be the isomorphism induced by $u_\psi$.

Let $W(\Phi)$ and $W(\Psi)$ denote the Weyl and affine Weyl groups respectively. As $T$ is split, we may view it as the generic fibre of a split torus $T_{\cO_k}$ over $\cO_k$. Denote by $N$ the normaliser of $T$ in $G$, and define the Iwahori--Weyl group $W(G)$ by $W(G)=N(k)/T_{\cO_k}(\cO_k)$. This contains $W(\Psi)$ as a finite-index normal subgroup, and the quotient $\Omega$ is isomorphic to the centre of the dual group $\hat G$.

Fix a choice of root basis for $\Phi$ and hence a choice of positive roots $\Phi^+$. Let $\cC$ be the alcove (maximal facet) in $\cA$ determined by the equations $0<\alpha(x) <1$ for all $\alpha \in \Phi^+$. The closure of $\cC$ contains the hyperspecial point $x_0$. Let $\Pi$ denote the set of simple affine roots with respect to our choice of root basis. Let $\Psi^+$ denote the positive affine roots with respect to $\Pi$. These are the affine roots that take positive values on $\cC$ and are non-negative integral linear combinations of affine roots in $\Pi$. 
%We may view the Weyl group $W(\Phi)$ as a subgroup of $W(G)$ by identifying it with the stabiliser of the hyperspecial vertex.

Fix a point $x \in \cA$ that is the barycentre of a facet in the closure of $\cC$. Let $G(k)_x$ denote the stabiliser in $G(k)$ of $x \in\cB$. This is a compact open subgroup of $G(k)$. For any non-negative real number $r$, we define the Moy--Prasad filtration subgroups 
$$G(k)_{x,r} := \langle T(F)_r, \hspace{0.5em} U_\psi : \psi(x) \geq r \rangle,$$
where
$$T(F)_0 = \{t \in T(F) : \val(\chi(t)) = 0 \text{ for all } \chi \in X^*(T)\} = T_{\cO_k}(\cO_k)$$
and, when $r>0$, 
$$T(F)_r = \{t \in T(F)_0 : \val(\chi(t)-1) \geq r \text{ for all } \chi \in X^*(T)\}.$$
Define  
$$G(k)_{x,r+} := \bigcup\limits_{s>r} G(k)_{x,s}.$$
%At any point $x$, the Moy--Prasad filtration $G(k)_{x,r}$ forms a neighbourhood basis of the identity (because any open subgroup of $G(k)$ will contain $T_s$ and $U_\alpha(\varpi^s \cO_k)$ for all $\alpha \in \Phi$, for sufficiently large $s$). The subgroup $G(k)_{x,0}$ is known as a parahoric subgroup of $G(k)$, and is finite index and normal in $G(k)_x$. 
The jumps in the Moy--Prasad filtration occur at a discrete set of real numbers. We often write $G(k)_{x,+}$ and $G(k)_{x,++}$ for the first two distinct positively indexed subgroups in the filtration. In other words, we use $+$ as shorthand for $0+$, and $++$ as shorthand for $r_1+$ where $r_1 \in \RR$ is such that $G(k)_{x,0+}=G(k)_{x,r_1} \neq G(k)_{x,r_1+}$. When $r<s$ in $\tilde{\RR}$, we write $G(k)_{x,r:s}$ for the quotient $G(k)_{x,r}/G(k)_{x,s}$.

%We will frequently drop $k$ from the notation.

From Bruhat--Tits theory \cite[4.6.2, 5.2.6]{BT2} we may view $G(k)_{x,0}$ as the $\cO_k$-points of a smooth connected affine $\cO_k$-group scheme $\cG_x$ containing the split torus $T_{\cO_k}$. Let $\GG_x$ denote the special fibre of $\cG_x$, a split reductive group over $\FF_q$, so that $\GG_x(\FF_q) \cong G(k)_{x,0:0+}$. Let $\TT_x \leq \GG_x$ denote the special fibre of $T_{\cO_k}$, a split maximal torus of $\GG_x$ over $\FF_q$. For any $r > 0$, the quotient $G(k)_{x,r:r+}$ may be viewed as an $\FF_q$-vector space, with a natural choice of basis given by the image of $u_\psi(1) \in G(k)_{x,r}$ over all affine roots $\psi$ with $\psi(x)=r$.

One may replace $k$ with any unramified extension in the definitions above. Fix a Frobenius morphism $F:G(k^\ur) \to G(k^\ur)$ so that $G(k)=G(k^\ur)^F$. As we assume that $G$ splits over $k$, we have $G(k)_{x,r} = G(k^\ur)_{x,r}^F$ for all $r$. We also have the isomorphism
$$G(k^\ur)_{x,r:r+} \cong G(k)_{x,r:r+} \otimes_{\FF_q} \overline{\FF}_q$$ 
as representations of $\GG_x(\overline{\FF}_q)$, when $r>0$. We say that $G(k^\ur)_{x,r:r+}$ admits an algebraic representation of $\GG_x$ defined over $\FF_q$.

Denote by $\Delta(x)$ the set of affine roots taking minimal positive value at $x$, and denote this value by $\delta(x)$. Let $\Pi(x)=\Delta(x) \cap \Pi$. As we assume that $x$ is the barycentre of a facet, $\Pi(x)$ is the subset of $\Pi$ not vanishing at $x$. Let $W(x)$ be the subgroup of $W(G)$ stabilising $x$. When $x$ is the barycentre of $\cC$, we have $\Delta(x)=\Pi$. In general, $\Delta(x)$ contains $w(\Pi(x))$ for all $w \in W(x)$. We define the following subsets of $\Psi$: let $\Psi_x^+$ be the set of affine roots taking a positive value at $x$, let $\Psi_{x,<1}^+$ be the subset taking value at $x$ strictly between 0 and 1, and let $\Psi_{x,\leq 1}^+$ be the subset satisfying $0<\psi(x)\leq 1$. We have 
$$G(k)_{x,+:++} \cong \bigoplus\limits_{\psi \in \Delta(x)} V_\psi$$
as $\cO_k$-modules. As $x$ is the barycentre of a facet, every affine root evaluated at $x$ takes value in $\delta(x) \ZZ$. Let $\Delta_2(x)$ denote the set of affine roots taking value $2\delta(x)$ at $x$.

\subsection{The abelianisation of $G(k)_{x,+:1}$}

The constructions in \cite{GR10} and \cite{RY14} take as input characters of the quotients $G(k)_{x,+:++}$. Certainly any character of $G(k)_{x,+}$ factors through the abelianisation of $G(k)_{x,+}$. We have the following elementary calculation.

\begin{lemma}\label{lem:sl2}
    Suppose $G=\SL_2$ or $\mathrm{PGL}_2$ and let $x$ be the barycentre of an alcove so that $\cI:=G(k)_{x,0}$ 
    is an Iwahori subgroup. Denote the next two terms of the Moy--Prasad filtration by $\cI_+$ and $\cI_{++}$. Then the derived subgroup of $\cI_+$ is contained in $\cI_{++}$. Suppose furthermore that $p>2$. Then the abelianisation of $\cI_+$ is $\cI_+/\cI_{++}$.
\end{lemma}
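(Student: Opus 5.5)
We will work with explicit matrices in the standard Iwahori for $\SL_2$, and handle $\mathrm{PGL}_2$ by the same computation on the image of $\GL_2$. With $x$ the barycentre of the alcove we have $\delta(x)=1/2$, and the filtration is indexed by $\tfrac12\ZZ$. Concretely,
$$\cI_+ = \begin{pmatrix} 1+\varpi\cO_k & \cO_k \\ \varpi\cO_k & 1+\varpi\cO_k\end{pmatrix}, \qquad \cI_{++} = \begin{pmatrix} 1+\varpi\cO_k & \varpi\cO_k \\ \varpi^2\cO_k & 1+\varpi\cO_k\end{pmatrix}$$
(intersected with $\SL_2$). The quotient $\cI_+/\cI_{++}\cong V_{\alpha}\oplus V_{-\alpha+1}\cong \FF_q^2$ is read off from the $(1,2)$ entry mod $\varpi$ and the $(2,1)$ entry mod $\varpi^2$, divided by $\varpi$.

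For the first claim, it is enough to show that $\cI_+/\cI_{++}$ is abelian, so that $[\cI_+,\cI_+]\subseteq\cI_{++}$. To see this, we check that the map $\cI_+\to \FF_q^2$,
$$\begin{pmatrix} a&b\\ c&d\end{pmatrix}\mapsto (\bar b,\overline{c/\varpi}),$$
is a group homomorphism with kernel $\cI_{++}$. Multiplying two such matrices gives off-diagonal entries $ab'+bd'$ and $ca'+dc'$. Since $a,d\equiv 1 \pmod \varpi$, these reduce to $b+b'$ and $c+c'$ modulo the relevant powers of $\varpi$. This is uniform in $p$. For $\mathrm{PGL}_2$ the same argument applies after choosing representatives in $\GL_2(\cO_k)$ of the given shape, since the torus part $T(k)_{0+}$ lies in $\cI_{++}$ in either case.

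For the second claim, with $p>2$, we need the reverse inclusion $\cI_{++}\subseteq[\cI_+,\cI_+]$, and the closure of the commutator subgroup suffices because $\cI_{++}$ is pro-$p$ and compact. We generate $\cI_{++}$ by $U_{\alpha+1}$, $U_{-\alpha+2}$ and $T(k)_{1}$, and produce each from commutators.

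We start with commutators of unipotents against the torus. For $t=\mathrm{diag}(s,s^{-1})$ with $s\in 1+\varpi\cO_k$, we have $[t,u_\alpha(y)]=u_\alpha((s^2-1)y)$. Taking $y\in\cO_k$ and $s=1+\varpi$, we get $s^2-1=2\varpi+\varpi^2$, which has valuation exactly $1$ when $p\neq 2$. This gives all of $U_{\alpha+1}$, and symmetrically all of $U_{-\alpha+2}$.

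For the torus part $T(k)_1$, we use commutators of $u_\alpha(y)$ with $u_{-\alpha}(\varpi z)$, which produce diagonal entries $1+\varpi yz+\dots$. Working modulo the unipotent pieces already obtained, these sweep out $T(k)_1$ modulo $T(k)_2$. A successive approximation argument, together with the completeness of the filtration, then gives all of $T(k)_1$.

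For $\mathrm{PGL}_2$ the torus commutator scales $u_\alpha$ by $s$ rather than $s^2$, so the unipotent step holds for all $p$. The torus part is again handled by the $u_\alpha$, $u_{-\alpha}$ commutators, where the factor $2$ can enter through the determinant normalisation.

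The main obstacle is this torus step. We must show that the commutators produce the full $T(k)_1$ and not just a finite-index subgroup, keeping track of where $p=2$ breaks the argument, in particular the squaring map on $1+\varpi\cO_k$. We would first handle it by a direct approximation modulo successive $\cI_{r}$, using that each graded quotient $\cI_{r:r+}$ for $r\ge 1$ is hit by commutators of graded pieces of degrees summing to $r$.
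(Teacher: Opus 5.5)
Your first half is correct, and your route differs slightly from the paper's. The paper uses three explicit commutator identities. You instead exhibit the homomorphism $\cI_+\to\FF_q^2$, $g\mapsto(\bar b,\overline{c/\varpi})$, with kernel $\cI_{++}$. This is a clean way to get $[\cI_+,\cI_+]\subseteq\cI_{++}$ for every $p$, and it descends to $\mathrm{PGL}_2$ because scalars in $1+\varpi\cO_k$ do not change $\bar b$ or $\overline{c/\varpi}$. Your unipotent step for $p>2$ is exactly the paper's: commutators of $T(k)_1$ with $U_\alpha$ and $U_{1-\alpha}$ give all of $U_{\alpha+1}$ and $U_{2-\alpha}$.

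The gap is the torus step, which you only sketch and yourself flag as the main obstacle. Successive approximation gives at best $T(k)_1\subseteq\overline{[\cI_+,\cI_+]}$. Your reason for discarding the closure, that $\cI_{++}$ is pro-$p$ and compact, does not work. What you need is that the abstract subgroup $[\cI_+,\cI_+]$ is closed, which is a property of that subgroup, not of $\cI_{++}$. This is true, for instance because $\cI_+$ is a finitely generated pro-$p$ group, but that is a nontrivial input your argument does not supply.

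The missing observation, which is the paper's argument, is that no approximation is needed: the Iwahori factorisation of a single commutator is exact. With $A=1+\varpi bc+\varpi^2b^2c^2$,
\[
u_{-\alpha}(-\varpi^2bc^2/A)\,[u_\alpha(b),u_{-\alpha}(\varpi c)]\,u_\alpha(\varpi b^2c/A)=\mathrm{diag}(A,A^{-1}),
\]
and the outer factors lie in $U_{2-\alpha}$ and $U_{\alpha+1}$, which you have already placed in the derived subgroup. Applying Hensel's lemma to $m+\varpi m^2=w$ (the derivative $1+2\varpi m$ is a unit), the values $1+\varpi m+\varpi^2m^2$ for $m\in\cO_k$ exhaust $1+\varpi\cO_k$. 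Hence $T(k)_1$, and therefore $\cI_{++}=U_{\alpha+1}T(k)_1U_{2-\alpha}$, lies in the abstract derived subgroup, with each element a product of boundedly many commutators.

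This also corrects your diagnosis of where $p=2$ enters.
\begin{itemize}
\item For $\SL_2$, the torus step is independent of $p$; the hypothesis $p>2$ is used only to obtain $U_{\alpha+1}$ and $U_{2-\alpha}$.
\item For $\mathrm{PGL}_2$ the roles are reversed. The unipotent step works for all $p$, and the same commutator gives the class of $\mathrm{diag}(A^2,1)$. So you need squaring to be surjective on $1+\varpi\cO_k$, which is exactly where $p>2$ is needed. At $p=2$ this genuinely fails: the determinant modulo squares is a character of $\cI_+$ that is nontrivial on $T(k)_1$.
\end{itemize}
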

\begin{proof}
    We prove this for $G=\SL_2$. A similar calculation holds for $\mathrm{PGL}_2$. After conjugating, we may assume that $\cI$ is the standard Iwahori subgroup generated by $T_0$, $U_{\alpha+0}$ and $U_{1-\alpha}$, where $U_{\alpha+0}$ denotes the strictly upper triangular matrices with entries in $\cO_k$, so that $\alpha$ is the standard choice of simple root of $\SL_2$. The following commutator relations 
    \begin{align*}
        \left[\begin{pmatrix}
            1+\varpi t & 0 \\ 0 & (1+\varpi t)^{-1}
        \end{pmatrix}, \begin{pmatrix}
            1 & b \\ 0 & 1
        \end{pmatrix}\right] &= \begin{pmatrix}
            1& \varpi(2tb+\varpi t^2b) \\ 0 & 1
        \end{pmatrix} \in \cI_{++} \\
        \left[\begin{pmatrix}
            1+\varpi t & 0 \\ 0 & (1+\varpi t)^{-1}
        \end{pmatrix}, \begin{pmatrix}
            1 & 0 \\ \varpi c & 1
        \end{pmatrix}\right] &= \begin{pmatrix}
            1& 0 \\ \varpi^2(2tc+\varpi t^2c) & 1
        \end{pmatrix} \in \cI_{++} \\
        \left[\begin{pmatrix}
            1 & b \\ 0 & 1
        \end{pmatrix}, \begin{pmatrix}
            1 & 0\\ \varpi c & 1
        \end{pmatrix}\right] &= \begin{pmatrix}
            1+\varpi bc + \varpi^2b^2c^2& -\varpi b^2c\\ \varpi^2 bc^2 & 1-\varpi bc
        \end{pmatrix} \in \cI_{++}
    \end{align*}
    for $b,c,t \in \cO_k$ imply that the derived subgroup $(\cI_+)_\mathrm{der}$ is contained in $\cI_{++}$. In other words, we recover the fact that $\cI_+/\cI_{++}$ is abelian. When $p>2$, the first two equations imply that $(\cI_+)_{\mathrm{der}}$ contains $U_{\alpha+1}$ and $U_{2-\alpha}$. Multiplying $\begin{psmallmatrix}
        1+\varpi bc + \varpi^2b^2c^2& -\varpi b^2c\\ \varpi^2 bc^2 & 1-\varpi bc
    \end{psmallmatrix}$ on the left and right by appropriate elements of $U_{2-\alpha}$ and $U_{\alpha+1}$ respectively, and varying $b,c$, we find that $(\cI_+)_\der$ contains $T_1$. It follows that $(\cI_+)_\mathrm{der} \geq \cI_{++}$, when $p>2$, and so the abelianisation of $\cI_+$ is $\cI_+/\cI_{++}$.
\end{proof}

\begin{cor}\label{cor:derived}
    When $p>2$, the derived subgroup of $G(k)_{x,+}$ contains $G(k)_{x,1+}$.
\end{cor}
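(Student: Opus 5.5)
The plan is to reduce to the rank one computation of Lemma \ref{lem:sl2}, root by root, and then handle the torus. Since $G(k)_{x,1+}$ is generated by $T(k)_{1+}$ and the affine root groups $U_\psi$ with $\psi(x)>1$, it suffices to show that each of these generators lies in $(G(k)_{x,+})_\der$. Note that $G(k)_{x,1+}$ does not depend on the facet in a way that matters here: an affine root $\psi=\alpha+n$ with $\psi(x)>1$ also satisfies $\psi-1 \in \Psi_x^+$.

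\textbf{Step 1 (root groups).} Fix $\psi=\alpha+n$ with $\psi(x)>1$. Take $t\in T(k)_1$, say $t=\alpha^\vee(1+\varpi s)$ for $s\in\cO_k$, which lies in $T(k)_{0+}\subseteq G(k)_{x,+}$. For $u\in \cO_k$,
$$[t, u_{\psi-1}(u)] = u_\alpha(\varpi^{n-1}((1+\varpi s)^2-1)u)=u_\alpha(\varpi^{n}(2s+\varpi s^2)u).$$
Since $\psi-1$ takes positive value at $x$, $u_{\psi-1}(u)\in G(k)_{x,+}$. When $p>2$ the scalar $2s+\varpi s^2$ with $s=1$ is a unit, so as $u$ ranges over $\cO_k$ we obtain all of $U_\psi$. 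This is the analogue of the first two commutators in Lemma \ref{lem:sl2}, transported by the homomorphism $\SL_2\to G$ attached to $\alpha$. If $\langle\alpha,\alpha^\vee\rangle=2$ fails to give a unit for some reason, I would instead use a cocharacter $\lambda$ with $\langle\alpha,\lambda\rangle=1$ in the adjoint case. Since $G$ is simple, however, $\alpha^\vee$ always exists and pairs to $2$.

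\textbf{Step 2 (torus).} Here I must show $T(k)_{1+}\subseteq (G(k)_{x,+})_\der$. Since $T(k)_{1+}=T(k)_2$ is generated by the $\alpha^\vee(1+\varpi^2\cO_k)$ together with possibly a finite-index complement, I would use the $\SL_2$-commutator $[u_{\alpha+m}(b),u_{-\alpha+m'}(c)]$ where $m+m'=1$ and both are chosen so that $\alpha+m,\,-\alpha+m'$ are positive at $x$. Such a choice is possible since their values at $x$ sum to $1$ and $x$ is a barycentre, so one can choose $m$ with $0<\alpha(x)+m<1$. This commutator equals $\alpha^\vee(1+\varpi bc+\dots)$ times root elements in $U_{\alpha+m+1}$ and $U_{-\alpha+m'+1}$, which by Step 1 lie in the derived subgroup. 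Removing them and varying $b,c$ yields $\alpha^\vee(1+\varpi\cO_k)$ modulo the derived subgroup, which in particular contains $\alpha^\vee(1+\varpi^2\cO_k)$. The main obstacle is that the coroots need not generate $T(k)_{2}$ when $G$ is not simply connected, as in the adjoint case. For this I would use that the image of $\prod_\alpha \alpha^\vee(1+\varpi^2\cO_k)\to T(k)_2$ has index prime to $p$, while $T(k)_2$ is pro-$p$. More concretely, $1+\varpi^2\cO_k$ is $n$-divisible for $n$ prime to $p$. Hence it is surjective, since $X_*(T)/\ZZ\Phi^\vee$ is finite, which for simple $G$ has order prime to $p$ except in a few cases. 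In those cases I would compare with $T(k)_{2}$ via the simply connected cover and its image, or argue directly using $p>2$ and exponent considerations.

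Combining both steps, every generator of $G(k)_{x,1+}$ lies in $(G(k)_{x,+})_\der$, which proves the claim.
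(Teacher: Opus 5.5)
\textbf{There is a genuine gap in Step 2, and it cannot be closed, because the statement is false in the cases you leave open.} Your overall route is the paper's own: reduce to rank one, obtain the root groups from torus/root-group commutators, and obtain the torus from $[U_{\alpha+m},U_{-\alpha+m'}]$.

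\textbf{Step 1 is correct.} The single commutator $[\alpha^\vee(1+\varpi),u_{\psi-1}(u)]=u_\psi((2+\varpi)u)$ does the job, exactly as in the first two relations of Lemma \ref{lem:sl2}.

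\textbf{A minor slip in Step 2.} If $\alpha(x)\in\ZZ$ (e.g.\ $x=x_0$, or $\alpha$ a root of $\GG_x$), there is no $m$ with $0<\alpha(x)+m<1$; being a barycentre does not help. You must take $\alpha(x)+m=1=m'-\alpha(x)$, so $m+m'=2$. The $\SL_2$ computation then gives only $\alpha^\vee(1+\varpi^2\cO_k)$. This is the paper's maximal-parahoric case, and it is all you need.

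\textbf{The gap: passing from the coroot subgroups to $T(k)_2$.} Inside $T(k)_2=X_*(T)\otimes_\ZZ(1+\varpi^2\cO_k)$, the subgroup generated by the $\alpha^\vee(1+\varpi^2\cO_k)$ has cokernel $(X_*(T)/\ZZ\Phi^\vee)\otimes_\ZZ(1+\varpi^2\cO_k)$. This is a pro-$p$ group killed by $|X_*(T)/\ZZ\Phi^\vee|$, not a group ``of index prime to $p$''.
\begin{itemize}
\item It vanishes when $p\nmid|X_*(T)/\ZZ\Phi^\vee|$, so your argument is complete there, e.g.\ for $G$ simply connected.
\item It is nonzero otherwise, i.e.\ for $\SL_n/\mu_d$ with $p\mid d$ and for adjoint $E_6$ with $p=3$.
\end{itemize}
In those cases your suggested fixes do not work. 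Passing to the simply connected cover only recovers the coroot subgroup.

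\textbf{Counterexample.} Take $G=\mathrm{PGL}_3$, $p=3$, $x=x_0$.
\begin{itemize}
\item $G(k)_{x,+}$ is the image of $K_1=1+\varpi M_3(\cO_k)$. Hence $(G(k)_{x,+})_\der$ is the image of $[K_1,K_1]\subseteq \SL_3(k)\cap(1+\varpi^2 M_3(\cO_k))$.
\item The class $h$ of $\mathrm{diag}(1,1,1+\varpi^2)$ lies in $T(k)_2\subseteq G(k)_{x,1+}$.
\item Every lift of $h$ lying in $1+\varpi^2M_3(\cO_k)$ has the form $c\,\mathrm{diag}(1,1,1+\varpi^2)$ with $c\in 1+\varpi^2\cO_k$.
\item For $p=3$ we have $c^3\in 1+\varpi^3\cO_k$. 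So the determinant satisfies $c^3(1+\varpi^2)\equiv 1+\varpi^2 \pmod{\varpi^3}$ and is never $1$.
\end{itemize}
Thus $h\notin (G(k)_{x,+})_\der$; this already fails over $\mathbb{Q}_3$.

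\textbf{The paper's proof has the same hole.} It concludes that the derived subgroup ``contains $T_2$'' after producing only the $\alpha^\vee(1+\varpi^2\cO_k)$. The corollary therefore needs either an extra hypothesis such as $p\nmid |X_*(T)/\ZZ\Phi^\vee|$, or a weaker conclusion with $T(k)_2$ replaced by the subgroup generated by the coroot images.
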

\begin{proof}
    For each root $\alpha \in \Phi$, the algebraic group $\langle U_\alpha,U_{-\alpha} \rangle$ is isomorphic to $\SL_2$ or $\mathrm{PGL}_2$.

    If $\alpha(x) \not\in \ZZ$ then $$\langle U_\alpha,U_{-\alpha} \rangle \cap G(k)_{x,+} = \langle U_{\alpha-\lceil\alpha(x)\rceil+1}, U_{\lceil\alpha(x)\rceil-\alpha}, \alpha^\vee(1+\varpi\cO_k)\rangle.$$
    This is the pro-unipotent radical of an Iwahori subgroup of $\SL_2$ or $\mathrm{PGL}_2$. By Lemma \ref{lem:sl2}, the derived subgroup of this contains $\langle U_{\alpha-\lceil\alpha(x)\rceil+2}, U_{\lceil\alpha(x)\rceil-\alpha+1}, \alpha^\vee(1+\varpi\cO_k)\rangle.$

    If $\alpha(x) \in \ZZ$ then $$\langle U_\alpha,U_{-\alpha} \rangle \cap G(k)_{x,+} = \langle U_{\alpha-\alpha(x)+1}, U_{\alpha(x)-\alpha+1}, \alpha^\vee(1+\varpi\cO_k)\rangle.$$ This is the pro-unipotent radical of a maximal parahoric subgroup of $\SL_2$ or $\mathrm{PGL}_2$ (so is conjugate to the kernel of reduction modulo $\varpi$ of $\SL_2(\cO_k)$ or $\mathrm{PGL}_2(\cO_k)$). By the same calculation as in Lemma \ref{lem:sl2} (replacing $b$ by $\varpi b$), the derived subgroup of this contains $\langle U_{\alpha-\lceil\alpha(x)\rceil+2}, U_{\lceil\alpha(x)\rceil-\alpha+2}, \alpha^\vee(1+\varpi^2\cO_k)\rangle.$

    Combining all of this, we see that the derived subgroup of $G(k)_{x,+}$ contains $T_2$ and $U_{\psi+1}$ for all $\psi \in \Psi$ satisfying $\psi(x)>0$, and so contains $ G(k)_{x,1+}$.
\end{proof}
%"same calculation" since we can just rewrite everything in terms of the u_\alpha
% we just need all the coroots and not coweights since the filtration of T is determined by evaluating at all weights not roots.

For $p>2$, the abelianisation of $G(k)_{x,+}$ is therefore the same as the abelianisation of $G(k)_{x,+:1+}$. To simplify calculations, for all $p$ we will instead describe the abelianisation of $G(k)_{x,+:1}$. When $p>3$ (or $p>2$ and $G$ is not of type $G_2$) we will show that this abelianisation is simply $G(k)_{x,+:++}$. In small residue characteristic the abelianisation of $G(k)_{x,+:1}$ may be larger, and thus allows us to define more characters of $G(k)_{x,+}$. In this way, after compactly inducing, we obtain new supercuspidal representations of $G(k)$.

% to get the rest want to find T_1 intersect the derived subgroup... not necessarily all of T_1...

To determine the abelianisation of $G(k)_{x,+:1}$ we must describe the derived subgroup. Note that if $0<\psi(x),\phi(x)<1$ and $\psi+\phi$ is constant (necessarily 1), then $[U_\psi,U_\phi] \subset \langle U_{\psi+1},U_{\phi+1}, T_1 \rangle \leq G(k)_{x,1}$ by the calculation of Lemma \ref{lem:sl2}. Chevalley's commutator formula for affine roots \cite[Lemma 15]{Ste16} gives, for affine roots $\psi,\phi$ with $\psi+\phi$ non-constant and $0<\psi(x),\phi(x)<1$, 
$$[u_{\psi}(r),u_{\phi}(s)] = \prod\limits_{i,j>0 \hspace{1mm} : \hspace{1mm} i\psi+j\phi \in \Psi_{x,< 1}^+} u_{i\psi+j\phi}(C_{\dot\psi,\dot\phi,i,j}(-r)^i s^j) \mod G(k)_{x,1}$$
taken with respect to increasing order in $i+j$ (the terms with $i+j=3$ commute), for all $r,s \in \cO_k$, and for explicit constants $C_{\dot\psi,\dot\phi,i,j} \in \{\pm 1, \pm 2, \pm 3\}$. 

%If $G$ is not of type $G_2$ then $C_{\dot\psi,\dot\phi,i,j} \in \{\pm 1, \pm 2\}$. If $G$ is not of type $B_n, C_n, F_4, G_2$ then $C_{\dot\psi,\dot\phi,i,j} \in \{\pm 1\}$.

We state some properties of these constants:

\begin{lemma}\label{lem:constants}
Let $\alpha, \beta$ be linearly independent roots of $G$. Then 
\begin{align*}
    C_{\alpha \beta i 1} &= M_{\alpha \beta i} \\
    C_{\alpha \beta 1 j} &= (-1)^j M_{\beta \alpha j} \\
    C_{\alpha \beta 3 2} &= \frac{1}{3}M_{\alpha+\beta,\alpha,2} \\
    C_{\alpha \beta 2 3} &= -\frac{2}{3}M_{\alpha+\beta,\beta, 2}
\end{align*}
where if the $\alpha$-string through $\beta$ is $\beta-p\alpha, \dots, \beta + q\alpha$, then $$M_{\alpha \beta i} = \pm \binom{p+i}{i}$$ where the sign depends on choices made in defining a Chevalley basis for $G$. In particular $C_{\alpha,\beta,i,j} \in \{\pm 1, \pm 2, \pm 3\}$ and if $\beta-\alpha$ is not a root then $C_{\alpha,\beta,i,1}$ and $C_{\alpha,\beta,1,j}$ are $\pm 1$.
\end{lemma}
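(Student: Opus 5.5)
We will derive Lemma \ref{lem:constants} from the standard description of the structure constants in Chevalley's commutator formula, e.g.\ as in Steinberg's lectures or Carter's \emph{Simple groups of Lie type}. There one writes
$$[u_\beta(s),u_\alpha(t)] = \prod_{i,j>0} u_{i\alpha+j\beta}\bigl(C'_{ij\alpha\beta}(-t)^i s^j\bigr),$$
with $C'_{i1\alpha\beta}=M_{\alpha\beta i}$, $C'_{1j\alpha\beta}=(-1)^jM_{\beta\alpha j}$, $C'_{32\alpha\beta}=\tfrac13 M_{\alpha+\beta,\alpha,2}$ and $C'_{23\alpha\beta}=-\tfrac23 M_{\alpha+\beta,\beta,2}$. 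Here $M_{\alpha\beta i}=\tfrac{1}{i!}N_{\alpha\beta}N_{\alpha,\alpha+\beta}\cdots N_{\alpha,(i-1)\alpha+\beta}$, and each $N_{\alpha,\gamma}=\pm(p_\gamma+1)$ with $p_\gamma$ maximal such that $\gamma-p_\gamma\alpha$ is a root. The first step is to match conventions. We compare the commutator convention $[a,b]=aba^{-1}b^{-1}$ and the ordering $[u_\psi(r),u_\phi(s)]$ used in the affine formula \cite[Lemma 15]{Ste16} with Carter's, and check that the constants $C_{\dot\psi,\dot\phi,i,j}$ there are exactly Carter's constants for the gradients. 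This works because $u_\psi(r)=u_{\dot\psi}(\varpi^n r)$, so the affine formula is obtained from the classical one by substitution, and the $\varpi$-powers are absorbed into the affine root groups. This bookkeeping, in particular tracking signs and the roles of $\alpha,\beta$ under swapping, is the main place errors can occur. If needed, we rename $\alpha\leftrightarrow\beta$ and use $[a,b]^{-1}=[b,a]$ to align the indexing.

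The second step is to compute the product of $N$'s. Along the $\alpha$-string through $\beta$, running from $\beta-p\alpha$ to $\beta+q\alpha$, the root $\beta+m\alpha$ has $p_{\beta+m\alpha}=p+m$. Hence
$$M_{\alpha\beta i}=\pm\frac{(p+1)(p+2)\cdots(p+i)}{i!}=\pm\binom{p+i}{i}.$$
The sign comes only from the sign choices of the $N$'s, that is, from the choice of Chevalley basis.

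For the final claims, we use that root strings in a reduced root system have length at most $4$, so $p+q\le 3$. A term with index $i$ is nonzero only if $\beta+i\alpha$ is a root, so $i\le q$, which gives $p+i\le 3$. The possible values of $\binom{p+i}{i}$ are therefore $1,2,3$, and $\binom{3}{1}=3$ occurs only in type $G_2$. For $C_{32}$ and $C_{23}$, which occur only in $G_2$, we check the values $\tfrac13 M$ and $\tfrac23 M$ directly. For $\alpha$ short and $\beta$ long with $\beta-\alpha$ not a root, we have $M_{\alpha+\beta,\alpha,2}=\pm3$ and $M_{\alpha+\beta,\beta,2}=\pm3$ up to the relevant strings, giving $\pm1$ and $\pm2$. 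Lastly, if $\beta-\alpha$ is not a root then $p=0$, so $M_{\alpha\beta i}=\pm1$. Likewise $\alpha-\beta$ is not a root, so the $\beta$-string through $\alpha$ also starts at $\alpha$, and $M_{\beta\alpha j}=\pm1$. This gives $C_{\alpha,\beta,i,1},C_{\alpha,\beta,1,j}\in\{\pm1\}$.
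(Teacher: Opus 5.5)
Your route is the paper's route. The paper's proof consists of citing Carter's commutator formula (Theorem 5.2.2) together with $M_{\alpha\beta i}=\pm\binom{p+i}{i}$. You additionally derive the binomial from $N_{\alpha,\gamma}=\pm(p_\gamma+1)$ and spell out the two \emph{in particular} deductions, and those parts are correct.

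\textbf{Omitted step: reduction to the adjoint group.} Carter's formulas are proved for adjoint Chevalley groups, whereas $G$ has arbitrary isogeny type. You therefore need to note that the commutator constants do not depend on the isogeny type. The paper takes this from the uniqueness statement in Steinberg's Lemma 15. Alternatively, all $i\alpha+j\beta$ with $i,j\ge 0$ lie in one positive system, and the central isogeny $G\to G_{\mathrm{ad}}$ is an isomorphism on the corresponding unipotent radical that transports the $u_\gamma$.

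\textbf{The convention check does not give ``exactly Carter's constants''.} Carter uses $[x,y]_{\mathrm C}=x^{-1}y^{-1}xy$ and $[x_\beta(s),x_\alpha(t)]_{\mathrm C}=\prod x_{i\alpha+j\beta}(C^{\mathrm C}_{ij\alpha\beta}(-t)^is^j)$, as in your display. Since $aba^{-1}b^{-1}=[a^{-1},b^{-1}]_{\mathrm C}$, we get $[u_\psi(r),u_\phi(s)]=[u_\psi(-r),u_\phi(-s)]_{\mathrm C}$. Comparing with the paper's formula gives $C_{\dot\psi,\dot\phi,i,j}=C^{\mathrm C}_{ji\dot\phi\dot\psi}$. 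So in the paper's stated convention:
\begin{itemize}
\item $C_{\alpha\beta i1}=(-1)^iM_{\alpha\beta i}$ and $C_{\alpha\beta 1j}=M_{\beta\alpha j}$, so the first two displayed identities hold only up to sign;
\item $C_{\alpha\beta 32}=-\tfrac23 M_{\alpha+\beta,\alpha,2}$ and $C_{\alpha\beta 23}=\tfrac13 M_{\alpha+\beta,\beta,2}$, so the magnitudes $1$ and $2$ of the $(3,2)$ and $(2,3)$ constants are interchanged.
\end{itemize}
No renaming of $\alpha,\beta$ repairs this. The displayed identities are literally true only when the commutator formula is read in Carter's convention, which is what the paper's bare citation tacitly assumes.

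Nothing downstream depends on this. What the paper actually uses is exactly your final paragraph: $C_{\alpha,\beta,i,j}\in\{\pm1,\pm2,\pm3\}$, and $C_{\alpha,\beta,i,1},C_{\alpha,\beta,1,j}=\pm1$ when $\beta-\alpha\notin\Phi$. These hold in either convention, because the hypothesis is symmetric in $\alpha$ and $\beta$.

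\textbf{Minor slip in your $G_2$ check.} For $\alpha$ short and $\beta$ long, $\alpha+2\beta$ is not a root, so $M_{\alpha+\beta,\beta,2}=0$ there and no $(2,3)$ term occurs. The $(2,3)$ term, with $|M_{\alpha+\beta,\beta,2}|=3$, arises when $\alpha$ is long and $\beta$ is short. The values $\pm1$, $\pm2$ you conclude are nevertheless right.
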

\begin{proof}
    In general, the constants in the commutator formula are unique and depend only on the roots of a simple reductive group $H$ (more precisely, their structure constants) \cite[Lemma 15]{Ste16}, and not the isogeny type of $H$. In particular, the Chevalley constants for $G$ are the same as for the adjoint group $G_{\mathrm{ad}}$, and these are given in \cite[Section 4.3 and Theorem 5.2.2]{Car1}.
\end{proof}

%This is because, for a general group, the commutator subgroup is generated by the commutators of a set of generators. For more detail see \cite[Theorem 4]{Gas20}. 

\begin{lemma}\label{lem:delta_span}
    The gradients of $\bigcup\limits_{w \in W(x)} w(\Pi(x))$ span $X^*(T) \otimes \RR$.
\end{lemma}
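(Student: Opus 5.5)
The idea is to reduce the statement to the connectedness of the affine Dynkin diagram of $\Pi$, using only the reflections in $W(x)$ attached to simple affine roots that vanish at $x$. Let $J = \Pi \setminus \Pi(x)$ be the set of simple affine roots vanishing at $x$. For $\psi \in J$, the affine reflection $s_\psi \in W(\Psi) \leq W(G)$ fixes the hyperplane $\psi = 0$, so it fixes $x$; hence $W_J := \langle s_\psi : \psi \in J\rangle \leq W(x)$. It therefore suffices to show that the span $S$ of the gradients of $W_J(\Pi(x))$ is all of $X^*(T)\otimes \RR$. The gradients of all of $\Pi$ already span $X^*(T)\otimes\RR$: they consist of the simple roots of $\Phi$, together with $-\theta$ for the highest root $\theta$. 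So the plan is to show that $\dot\psi \in S$ for every $\psi \in J$.

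First, $\Pi(x) \neq \emptyset$. There is a relation $\sum_{\psi \in \Pi} c_\psi \psi = 1$ with all $c_\psi > 0$, so not every simple affine root can vanish at $x$. Since $G$ is simple, the affine Dynkin diagram on $\Pi$ is connected. Hence for every $\psi \in J$ there is a shortest path $\varphi = \psi_0, \psi_1, \dots, \psi_m = \psi$ in the diagram with $\varphi \in \Pi(x)$ and $\psi_1,\dots,\psi_m \in J$ pairwise distinct; here ``adjacent'' means $\langle \dot\psi_{i}, \dot\psi_{i+1}^\vee\rangle < 0$. Define $w_i = s_{\psi_i}\cdots s_{\psi_1}$, with $w_0 = 1$.

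The key step, proved by induction on $i$, is the following claim: the gradient of $w_i\varphi$ equals $\dot\varphi + \sum_{j=1}^{i} a_j \dot\psi_j$ with all $a_j > 0$. For the inductive step, pair with $\dot\psi_{i+1}^\vee$. The term $\langle\dot\varphi,\dot\psi_{i+1}^\vee\rangle$ is $\leq 0$, since $\varphi \neq \psi_{i+1}$ are distinct simple affine roots. The terms $a_j\langle \dot\psi_j,\dot\psi_{i+1}^\vee\rangle$ are all $\leq 0$ for the same reason, and the one with $j = i$ is strictly negative because $\psi_i$ and $\psi_{i+1}$ are adjacent. Hence $\langle \dot{(w_i\varphi)}, \dot\psi_{i+1}^\vee\rangle < 0$. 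Since $s_{\psi_{i+1}}$ acts on gradients as the reflection in $\dot\psi_{i+1}$, the gradient of $w_{i+1}\varphi$ is that of $w_i\varphi$ plus a strictly positive multiple of $\dot\psi_{i+1}$, which proves the claim. Consequently $\dot{(w_{m}\varphi)} - \dot{(w_{m-1}\varphi)}$ is a nonzero multiple of $\dot\psi_m = \dot\psi$. Both $w_m\varphi$ and $w_{m-1}\varphi$ lie in $W_J(\Pi(x))$, so $\dot\psi \in S$.

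Applying this to every $\psi \in J$, and using $\dot\varphi \in S$ trivially for $\varphi \in \Pi(x)$, we get that all gradients of $\Pi$ lie in $S$, hence $S = X^*(T)\otimes\RR$.

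The main obstacle is the sign bookkeeping in the induction. One must check that the off-diagonal pairings between distinct simple affine roots, computed via gradients and coroots, are $\leq 0$ in the affine setting, including pairings involving $\alpha_0 = 1-\theta$. One must also check that the positivity of the coefficient $a_i$ is not destroyed by earlier terms. Working with a shortest path into $J$, which avoids repeated vertices and keeps all $\psi_j$ distinct from $\varphi$, is exactly what makes these sign arguments go through.
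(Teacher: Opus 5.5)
Your proof is correct and follows the paper's argument. You take a path in the affine Dynkin diagram from $\psi\in\Pi\setminus\Pi(x)$, through vertices vanishing at $x$ (whose reflections lie in $W(x)$), to a vertex of $\Pi(x)$; you apply these reflections successively and note that consecutive images differ by a positive multiple of the next $\psi_r$. The only difference is how you justify that the multiple is positive: you use that all pairings between gradients of distinct simple affine roots are nonpositive, with $\varphi=\psi_0$ carrying coefficient $1$ in the base step, whereas the paper uses that non-consecutive vertices on the path are orthogonal.
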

\begin{proof}
    Let $(,)$ denote the bilinear form on $X^*(T) \otimes \RR$ determined by the Killing form on $\mathrm{Lie}(T)$. Let $\dot\Pi$ and $\dot\Pi(x)$ denote the set of gradients of $\Pi$ and $\Pi(x)$ respectively. As $\dot\Pi$ contains the simple roots of $\Phi$, and thus spans $X^*(T) \otimes \RR$, it suffices to show that the $\RR$-span of the gradients of $\bigcup\limits_{w \in W(x)} w(\Pi(x))$ contains $\dot\Pi$. Certainly it contains $\dot\Pi(x)$. Note that $\Pi(x)$ is always non-empty. Let $\psi \in \Pi\backslash\Pi(x)$. As the affine Dynkin diagram of $G$ is path-connected (as $G$ is simple), there exists $\psi=\psi_1,\psi_2,\dots,\psi_n \in \Pi \backslash \Pi(x)$ and $\psi_{n+1} \in \Pi(x)$ such that $(\dot\psi_i,\dot\psi_j)$ is negative if $|i-j|=1$ and is zero if $|i-j|>1$. 
    
    Then, for all $1 \leq r \leq n$, we have $$w_{\psi_r} w_{\psi_{r+1}} \dots w_{\psi_n}(\psi_{n+1}) \in \bigcup\limits_{w \in W(x)} w(\Pi(x)),$$ where $w_{\psi_r}$ denotes the simple reflection in $W(G)$ corresponding to $\psi_r$. Note that $w_{\psi_r} \in W(x)$ as $\psi_r \in \Pi\backslash \Pi(x)$. The difference 
    $$w_{\psi_r} w_{\psi_{r+1}} \dots w_{\psi_n}(\psi_{n+1}) - w_{\psi_{r+1}} w_{\psi_{r+2}} \dots w_{\psi_n}(\psi_{n+1})$$ is of the form $a_r\psi_r+b_r$ for some integers $a_r,b_r$, with $a_r>0$. Taking gradients, it follows that $\dot\psi_r$ is in the $\RR$-span of the gradients of $\bigcup\limits_{w \in W(x)} w(\Pi(x))$ for all $1 \leq r \leq n+1$. In particular, $\dot\psi$ is in the $\RR$-span of the gradients of $\bigcup\limits_{w \in W(x)} w(\Pi(x))$, completing the proof.
\end{proof}

\begin{cor}
    The gradients of $\Delta(x)$ span $X^*(T) \otimes \RR$.
\end{cor}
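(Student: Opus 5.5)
This follows almost directly from Lemma \ref{lem:delta_span}, once we check that $\Delta(x)$ contains $\bigcup_{w \in W(x)} w(\Pi(x))$; the paper already notes this inclusion.

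To verify it, we use that $W(x)$ fixes $x$. For $w \in W(x)$ and an affine root $\psi$, we have $(w\psi)(x) = \psi(w^{-1}x) = \psi(x)$. So the action of $W(x)$ on $\Psi$ preserves the value of an affine root at $x$. Every element of $\Pi(x)$ takes the value $\delta(x)$ at $x$, because $\Pi(x)=\Delta(x)\cap\Pi$. Its $W(x)$-translates therefore also take the value $\delta(x)$ at $x$. They are still affine roots, since $W(G)$ permutes $\Psi$. Hence they lie in $\Delta(x)$, which gives
$$\bigcup_{w \in W(x)} w(\Pi(x)) \subseteq \Delta(x).$$

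Taking gradients preserves this inclusion. The $\RR$-span of the gradients of $\Delta(x)$ therefore contains the span of the gradients of the union. By Lemma \ref{lem:delta_span}, the latter span is all of $X^*(T)\otimes\RR$, which proves the corollary.

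The only point needing care is the claim that $W(G)$, including its $\Omega$-part, acts on affine roots compatibly with evaluation at points. This is standard: $W(G)$ acts on $\cA$ by affine automorphisms and on $\Psi$ by precomposition with the inverse.
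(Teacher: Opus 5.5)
Your proof is correct and is the paper's argument: the paper states this corollary with no separate proof, as an immediate consequence of Lemma \ref{lem:delta_span} together with the earlier remark that $\Delta(x)$ contains $w(\Pi(x))$ for all $w \in W(x)$. Your justification of that inclusion, namely that $W(x)$ permutes $\Psi$ while preserving values at $x$, is exactly what is needed to fill in that remark.
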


\begin{lemma}
    There exist positive integers $a_\theta$ such that $$\sum\limits_{\theta \in \bigcup\limits_{w \in W(x)}w(\Pi(x))} a_\theta \theta$$ is a constant function.
\end{lemma}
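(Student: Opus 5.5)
The natural approach is to exploit the relation among the simple affine roots, $\sum_{\psi\in\Pi} m_\psi \psi = 1$ with all $m_\psi$ positive integers, and to average it over $W(x)$. Set $S := \bigcup_{w\in W(x)} w(\Pi(x))$. Since $W(x)$ fixes $x$, every $\theta\in S$ satisfies $\theta(x)=\delta(x)$, so $S\subseteq\Delta(x)$. The group $W(x)$ is finite, because it stabilises a point and acts on the affine roots preserving values at $x$. Moreover $W(x)$ permutes $S$. So the candidate is
$$\sigma:=\sum_{w\in W(x)} w\Big(\sum_{\psi\in\Pi(x)} m_\psi \psi\Big)=\sum_{\theta\in S} a_\theta\theta, \qquad a_\theta=\sum_{\substack{w\in W(x),\ \psi\in \Pi(x)\\ w\psi=\theta}} m_\psi,$$
and each $a_\theta$ is a positive integer because $\theta\in S$ has at least one such preimage. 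It then remains to show that $\sigma$ is constant.

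To prove constancy, I would split the basic relation as $\sum_{\Pi(x)} m_\psi\psi = 1-\sum_{\psi\in\Pi\setminus\Pi(x)} m_\psi\psi$. Each $\psi\in\Pi\setminus\Pi(x)$ vanishes at $x$, so its gradient $\dot\psi$ is a root, and $w_\psi\in W(x)$. Applying $w$ and summing over $W(x)$ gives
$$\sigma = |W(x)| - \sum_{\psi\in \Pi\setminus\Pi(x)} m_\psi \sum_{w\in W(x)} w(\psi).$$
It therefore suffices to show that $\sum_{w\in W(x)} w(\psi)=0$ for every affine root $\psi$ vanishing at $x$. Let $\ell(y)=\sum_{w} w(\psi)(y)$. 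This $\ell$ is affine, vanishes at $x$, and is $W(x)$-invariant. Its gradient $\sum_w \dot w(\dot\psi)$ is invariant under the linear parts of $W(x)$, and these include the reflection $s_{\dot\psi}$ because $w_\psi\in W(x)$. This alone does not force the gradient to vanish.

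For the vanishing, one could sum instead over the finite subgroup $W_x'$ generated by reflections in roots vanishing at $x$, which is the Weyl group of $\GG_x$. The sum of a root's orbit under the Weyl group of a root system lies in the fixed space of that group. Roots vanishing at $x$ are spanned by roots of $\GG_x$, so the fixed space meets their span only in $0$. Hence $\sum_{w\in W'_x} w(\psi)$ has zero gradient, and since it vanishes at $x$ it is identically $0$. Summing over cosets of $W'_x$ in $W(x)$ then gives $\sum_{w\in W(x)}w(\psi)=0$, using $W'_x\le W(x)$.

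The main obstacle is exactly this vanishing step: checking that the linear part of $W'_x$ is the Weyl group of the root subsystem $\{\dot\psi : \psi(x)=0\}$, and that the orbit sum lands in that subsystem's span and is fixed by it, and is therefore zero. If anything goes wrong with this, the fallback is to take $\sigma$ and use Lemma \ref{lem:delta_span}. The gradient of $\sigma$ is $W(x)$-invariant and lies in the span of $\dot S$. One then shows directly, by comparing with the basic relation, that its pairing with each coroot $\dot\psi^\vee$ for $\psi\in\Pi\setminus\Pi(x)$ and with the remaining directions vanishes.
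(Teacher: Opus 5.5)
Your proposal is correct and follows the same route as the paper: average the relation $\sum_{\psi\in\Pi} b_\psi\psi=1$ over $W(x)$, then show that the contribution of the affine roots vanishing at $x$ cancels. Your fixed-space argument for $\sum_{w\in W(x)}w(\psi)=0$ is valid, but the step you flagged as the main obstacle is immediate (and the fallback unnecessary): reindexing by $w\mapsto ww_\psi$, a bijection of $W(x)$ since $w_\psi\in W(x)$, gives $\sum_w w(\psi)=-\sum_w w(\psi)$, which is exactly the paper's observation that the averaged sum has equal coefficients on $\theta$ and $-\theta$.
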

\begin{proof}
    There exist unique positive integers $b_\psi$ such that $\sum_{\psi \in \Pi}b_\psi \psi = 1$. View the left hand side of this equation as an element of the free abelian group $\ZZ[\Pi]$ on the set $\Pi$. This naturally embeds into the free abelian group $\ZZ[\bigcup\limits_{w \in W(x)} w(\Pi)]$. Define 
    $$\sigma: = \sum\limits_{w \in W(x)} \sum\limits_{\psi \in \Pi} b_\psi w(\psi) \in \ZZ[\bigcup\limits_{w \in W(x)} w(\Pi)].$$ Then all coefficients of $\sigma$ are positive integers. Denote the coefficient of $\theta$ in $\sigma$ by $\sigma(\theta)$. The elements of $\bigcup\limits_{w \in W(x)} w(\Pi \backslash \Pi(x))$ are exactly the affine roots vanishing at $x$. For such an affine root $\theta$, let $w_\theta$ denote the corresponding reflection in $W(x)$. Then, as $\sigma$ is invariant under the action of $w_\theta$, and $w_\theta(\theta)=-\theta$, we observe that $\sigma(\theta)=\sigma(-\theta)$. By construction, the sum of affine roots, 
    $$\sum\limits_{\theta \in \bigcup\limits_{w \in W(x)} w(\Pi)} \sigma(\theta) \theta,$$ is a constant function. As $\sigma(\theta)=\sigma(-\theta)$ for $\theta \in \bigcup\limits_{w \in W(x)} w(\Pi \backslash \Pi(x))$, it follows that the sum of affine roots, 
    $$\sum\limits_{\theta \in \bigcup\limits_{w \in W(x)} w(\Pi(x))} \sigma(\theta) \theta,$$ is a constant function. We have seen that all such $\sigma(\theta)$ are positive integers, and we take $a_\theta=\sigma(\theta)$.
\end{proof}

\begin{cor}\label{cor:delta_span}
    For any positive affine root $\psi \in \Psi_{x,\leq 1}^+ \backslash \Delta(x)$ there exists $\theta \in \Delta(x)$ such that $\psi-\theta \in \Psi_{x,\leq 1}^+$.
\end{cor}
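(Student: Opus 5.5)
Let $\psi \in \Psi_{x,\leq 1}^+ \backslash \Delta(x)$. Since every affine root takes values in $\delta(x)\ZZ$ at $x$ and $\psi \notin \Delta(x)$, we have $\psi(x) \geq 2\delta(x)$. So for every $\theta \in \Delta(x)$ the value $(\psi-\theta)(x) = \psi(x)-\delta(x)$ lies in $[\delta(x), 1-\delta(x)] \subset (0,1]$. The only issue is therefore whether $\psi-\theta$ is an affine root. Writing $\psi = \alpha + n$ and $\theta = \beta + m$, the difference $\psi-\theta = (\alpha-\beta)+(n-m)$ is an affine root exactly when $\alpha-\beta \in \Phi$. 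It is not an affine root if $\alpha=\beta$ (then it is constant) or if $\alpha-\beta$ is not a root. The claim reduces to finding $\theta \in \Delta(x)$ with $\dot\theta \neq \dot\psi$ and $\dot\psi - \dot\theta \in \Phi$.

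\textbf{Step 1.} Use the preceding lemma. It gives positive integers $a_\theta$ with $\sum_{\theta} a_\theta \theta = c$ constant, the sum running over $S := \bigcup_{w\in W(x)} w(\Pi(x)) \subseteq \Delta(x)$. Each $\theta \in S$ takes the value $\delta(x)>0$ at $x$, so $c>0$. On gradients this gives $\sum_\theta a_\theta \dot\theta = 0$, with all $a_\theta>0$.

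\textbf{Step 2.} Pair this relation with $\dot\psi$ using the $W$-invariant form $(\,,\,)$ from Lemma \ref{lem:delta_span}:
$$\sum_{\theta \in S} a_\theta (\dot\psi,\dot\theta) = 0.$$
By the Corollary following Lemma \ref{lem:delta_span}, the gradients $\dot\theta$ span $X^*(T)\otimes\RR$, so they are not all orthogonal to $\dot\psi \neq 0$. Some term is therefore nonzero. Since the weighted sum vanishes and all $a_\theta>0$, some $\theta \in S$ must satisfy $(\dot\psi,\dot\theta) > 0$.

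\textbf{Step 3.} Fix such a $\theta$. By the standard fact for root systems, if $\dot\theta \neq \dot\psi$ then $\dot\psi - \dot\theta \in \Phi$; note that $\dot\psi \neq -\dot\theta$ because the pairing is positive. The remaining case is $\dot\theta = \dot\psi$, where I expect the main obstacle to lie.

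In that case $\psi-\theta$ is a positive constant, since $\psi(x) > \theta(x)$, and it is an integer. Hence $\psi(x) \geq \theta(x) + 1 > 1$, contradicting $\psi \in \Psi_{x,\leq 1}^+$. So this case cannot occur, and $\theta$ has the required property.

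\textbf{Step 4.} As noted at the outset, $(\psi - \theta)(x) \in (0,1]$. Together with Step 3 this shows $\psi - \theta \in \Psi_{x,\leq 1}^+$, which proves the corollary.
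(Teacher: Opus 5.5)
Your proof is correct and follows the paper's argument essentially verbatim: you pair the positive relation $\sum a_\theta \dot\theta = 0$ with $\dot\psi$, and use positive definiteness plus spanning to get some $(\dot\psi,\dot\theta)>0$. You then rule out $\dot\theta=\dot\psi$, since $\psi-\theta$ would be a positive integer contradicting $\psi(x)\leq 1$, and conclude $\psi-\theta\in\Psi_{x,\leq 1}^+$. One small citation point: in Step 2 you need spanning for the gradients of $\bigcup_{w\in W(x)} w(\Pi(x))$, which is Lemma \ref{lem:delta_span} itself, not the weaker corollary about $\Delta(x)$.
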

\begin{proof}
    The above lemma gives the equality 
    $$\sum\limits_{\theta \in \bigcup\limits_{w \in W(x)} w(\Pi(x))}a_\theta (\dot\psi,\dot\theta) = 0.$$
    As the Killing form is positive definite, by Lemma \ref{lem:delta_span} we must have that $\dot\psi$ pairs positively with $\dot\theta$ for some $\theta \in \bigcup\limits_{w \in W(x)} w(\Pi(x)) \subset \Delta(x)$, with respect to $(,)$. Since $0<\theta(x) < \psi(x)\leq 1$, we know that $\dot\psi \neq \dot\theta$. So $\psi-\theta \in \Psi$, and $0< (\psi-\theta)(x) \leq 1$. 
\end{proof}

\begin{notn}
    For $\psi \in \Psi_{x}^+$, let $\overline{U}_\psi$ denote the image of $U_\psi$ in $G(k)_{x,+:1}$.
\end{notn}

\begin{prop}
    The following hold:
    \begin{itemize}
        \item If $G$ is simply laced then the abelianisation of $G(k)_{x,+:1}$ is $G(k)_{x,+:++}$.
        \item If $G$ is of type $B_n,C_n,F_4$ and $p>2$ then the abelianisation of $G(k)_{x,+:1}$ is $G(k)_{x,+:++}$.
        \item If $G$ is of type $G_2$ and $p>3$ then the abelianisation of $G(k)_{x,+:1}$ is $G(k)_{x,+:++}$.
    \end{itemize}
\end{prop}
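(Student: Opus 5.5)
We must show that the derived subgroup $D$ of $H:=G(k)_{x,+:1}$ equals $G(k)_{x,++:1}$. The containment $D\subseteq G(k)_{x,++:1}$ is immediate. The quotient $G(k)_{x,+:++}\cong\bigoplus_{\psi\in\Delta(x)}V_\psi$ is abelian, since any two root subgroups $U_\psi,U_\phi$ with $\psi,\phi\in\Delta(x)$ commute modulo $G(k)_{x,++}$. This follows from Chevalley's commutator formula when $\psi+\phi$ is non-constant. When $\psi+\phi$ is constant, it follows from the $\SL_2$ calculation of Lemma \ref{lem:sl2}. The torus part is central modulo $G(k)_{x,++}$ as well.

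For the reverse containment, note that $G(k)_{x,++:1}$ is generated by the images $\overline U_\psi$ for $\psi\in\Psi^+_{x,<1}\setminus\Delta(x)$, together with the image of $T(F)_{0+}\cap G(k)_{x,++}$. The latter is trivial, since $T(F)_{0+}=T(F)_1\subseteq G(k)_{x,1}$. So it suffices to show $\overline U_\psi\subseteq D$ for every such $\psi$. I would argue by induction on $\psi(x)\in\delta(x)\ZZ$, starting at $2\delta(x)$. I would also work modulo the higher filtration steps, using that $D\cap G(k)_{x,r:1}$ surjecting onto $G(k)_{x,r:r+}$ for each $r$ (with everything of value $>r$ already in $D$) gives the result. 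The filtration is finite in $H$, so this descending argument terminates.

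For the inductive step, Corollary \ref{cor:delta_span} gives $\theta\in\Delta(x)$ with $\phi:=\psi-\theta\in\Psi^+_{x,\le1}$. Since $\psi(x)<1$, we in fact have $\phi\in\Psi^+_{x,<1}$. Let $\alpha=\dot\theta$ and $\beta=\dot\phi$. Then
\[
[u_\theta(r),u_\phi(s)]\equiv u_\psi(C_{\alpha\beta11}(-r)s)\cdot\prod_{i+j\ge3}u_{i\theta+j\phi}(\cdots)\pmod{G(k)_{x,1}},
\]
and the terms with $i+j\ge3$ have value $>\psi(x)$, so by induction they already lie in $D$. Hence $u_\psi(C_{\alpha\beta11}rs)\in D$ for all $r,s$. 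If $C_{\alpha\beta11}$ is a unit in $\cO_k$, then $\overline U_\psi$ modulo higher terms lies in $D$, and the step is done. By Lemma \ref{lem:constants}, $C_{\alpha\beta11}=\pm(p'+1)$, where $p'$ is the length of the $\alpha$-string going down from $\beta$. Thus $C_{\alpha\beta11}\in\{\pm1\}$ in the simply laced case, $\{\pm1,\pm2\}$ for types $B_n,C_n,F_4$, and $\{\pm1,\pm2,\pm3\}$ for $G_2$. Under the stated hypotheses on $p$, this constant is therefore always invertible in $\cO_k$.

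The main obstacle is the bookkeeping in this step. One must check that every higher-order term $i\theta+j\phi$ is either of strictly larger value, hence already in $D$ by the descending induction, or lies in $G(k)_{x,1}$. One must also make sure the case $\theta+\phi$ constant never arises here; it cannot, since $\psi=\theta+\phi$ is a genuine affine root with $\psi(x)<1$. In addition, the induction should be organised from the top value down, so that the higher terms are already known to be in $D$ when we treat $\psi$. Once this ordering is set up, the three bullet points follow uniformly from the invertibility of the Chevalley constants $C_{\alpha\beta11}$.
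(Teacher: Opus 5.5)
Your proposal is correct and follows the paper's proof. Both run a descending induction on $\psi(x)\in\delta(x)\ZZ$, use Corollary \ref{cor:delta_span} to write $\psi=\theta+(\psi-\theta)$ with $\theta\in\Delta(x)$, put the higher commutator terms in the derived subgroup by induction, and conclude from the invertibility of $C_{\dot\theta,\dot\psi-\dot\theta,1,1}=\pm(p'+1)$ under the stated hypotheses on $p$. Your extra remarks (the torus part $T(F)_1$ being trivial in $G(k)_{x,+:1}$, and the constant-sum case not arising) only make explicit some bookkeeping the paper leaves implicit; your phrase ``starting at $2\delta(x)$'' should be read as the top-down induction you describe at the end.
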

    \begin{proof}
        Let $\psi \in \Psi_x^+$. Then $\psi(x) \in \delta(x)\ZZ$. We must show that if $\psi(x) > \delta(x)$ then $\overline{U}_\psi \subset \cpder$. By definition, when $\psi(x)\geq 1$, $\overline U_\psi$ is trivial. In particular, $\overline U_\psi$ is contained in $\cpder$ when $\psi(x) \geq \delta(x)m$ for any integer $m \geq 1/\delta(x)$. It then suffices to show that for any integer $m \geq 2$, if $\overline{U}_\psi \subset \cpder$ whenever $\psi(x)>\delta(x)m$ then $\overline{U}_\psi \subset \cpder$ when $\psi(x)\geq \delta(x)m$. 
        
        Suppose $1 \geq \psi(x) \geq \delta(x)m > \delta(x)$. Then $\psi \not\in \Delta(x)$. By Corollary \ref{cor:delta_span} there exists $\theta \in \Delta(x)$ such that $\psi-\theta \in \Psi_x^+$. Applying the commutator relations to $\theta$ and $\psi-\theta$, we see that 

        $$[u_{\theta}(r),u_{\psi-\theta}(s)] = \prod\limits_{i,j>0 \hspace{1mm} : \hspace{1mm} i\theta+j(\psi-\theta) \in \Psi_{x,< 1}^+} u_{i\theta+j(\psi-\theta)}(C_{\dot\theta,\dot\psi-\dot\theta,i,j}(-r)^i s^j) \mod G(k)_{x,1}$$
        for all $r,s \in \cO_k$. When $i>1$ or $j>1$, $(i\theta+j(\psi-\theta))(x) >\psi(x)$, and so by the induction hypothesis $\overline U_{i\theta+j(\psi-\theta)} \leq (G(k)_{x,+:1})_\der$. As $[u_{\theta}(r),u_{\psi-\theta}(s)] \mod G(k)_{x,1}\in \cpder$, it follows that 
        $$u_{\psi}(C_{\dot\theta,\dot\psi-\dot\theta,1,1}(-r) s) \mod G(k)_{x,1} \in \cpder$$
        for all $r,s \in \cO_k$. By assumption, $C_{\dot\theta,\dot\psi-\dot\theta,1,1}$ is invertible in $\cO_k$ and we deduce that $\overline U_\psi\leq \cpder$. Hence $(G(k)_{x,+:1})_\der \geq G(k)_{x,++:1}$, while certainly $G(k)_{x,+:++}$ is abelian.
    \end{proof}

    \iffalse
    \begin{rem}
        When $p>2$ in the above proposition, we may replace the abelianisation of $G(k)_{x,+:1}$ with the abelianisation of $G(k)_{x,+}$ in each statement. This almost follows from combining this proposition with Corollary \ref{cor:derived}. This stronger claim holds by replacing $G(k)_{x,1}$ in the above proof everywhere by the the more cumbersome group $\langle T_1,G(k)_{x,1+}\rangle$.
    \end{rem}

\fi

We now consider the remaining cases when $p=2,3$.

\begin{notn}
    Let $\psi,\theta \in \Psi_x^+$ such that $\overline U_\psi$ and $\overline U_\theta$ commute in $G(k)_{x,+:1}$. Let $\overline U_{\psi,\theta}$ denote the subgroup of $G(k)_{x,+:1}$  $$\overline U_{\psi,\theta} = \{u_\psi(r)u_\theta(r) \mod G(k)_{x,1} : r\in \cO_k\}.$$
\end{notn}

\begin{lemma}\label{lem:conditions}
    Suppose $G$ is not of type $G_2$, $p=2$, and let $\psi \in \Psi_x^+ \backslash \Delta(x)$.
    \begin{enumerate}
        \item If there exists $\theta \in \Delta(x)$ such that $$\pair{\dot\psi}{\dot\theta^\vee} = 1 = \pair{\dot\theta}{\dot\psi^\vee},$$ then $\overline U_\psi \leq \cpder$.
        \item Suppose there exists $\theta \in \Delta(x)$ such that $\pair{\dot\psi}{\dot\theta^\vee} = 1$ and $ \pair{\dot\theta}{\dot\psi^\vee}=2$. Then $2\psi-\theta \in \Psi_x^+$, and $\overline U_\psi$ and $\overline U_{2\psi-\theta}$ commute in $G(k)_{x,+:1}$. Moreover, if $q \neq 2$ then $\overline U_\psi, \overline U_{2\psi-\theta} \leq \cpder$. If $q=2$, then $\overline U_{\psi,\theta} \leq \cpder$.
    \end{enumerate}
\end{lemma}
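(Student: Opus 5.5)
The plan is to treat both parts by a single explicit commutator computation in $G(k)_{x,+:1}$, using the affine Chevalley commutator formula and Lemma \ref{lem:constants}; unlike the proof of the preceding proposition, no induction on $\psi(x)$ should be needed.

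Two preliminary observations will be used throughout.

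First, since $p=2$ we have $2\in\varpi\cO_k$. For any $\phi\in\Psi_x^+$ we get $u_\phi(2a)\in U_{\phi+1}\subset G(k)_{x,1}$, because $(\phi+1)(x)>1$. Hence the image of $u_\phi(a)$ in $G(k)_{x,+:1}$ depends only on $a \bmod \varpi$, and $\overline U_\phi\cong \FF_q$. In particular all the signs $\pm$ in the constants $C_{\alpha,\beta,i,j}$ disappear, and any constant equal to $\pm2$ kills its term.

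Second, since $\theta\in\Delta(x)$ and $\psi\notin\Delta(x)$ with $\psi(x)>0$, we have $0<\theta(x)<\psi(x)$. Thus whenever $\psi-\theta$ is an affine root it lies in $\Psi_x^+$, and I may apply the commutator formula to the pair $\theta$, $\psi-\theta$. If $\psi(x)\ge 1$ there is nothing to prove, so I assume $\psi(x)<1$.

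\textbf{Part (1).} Here $\dot\psi,\dot\theta$ have the same length and make an angle of $60^\circ$, so $\beta:=\dot\psi-\dot\theta$ is a root.

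\begin{itemize}
\item The $\dot\theta$-string through $\beta$: we have $\pair{\beta}{\dot\theta^\vee}=-1$, and $\beta-\dot\theta=\dot\psi-2\dot\theta$ has pairing $-3$ with $\dot\theta^\vee$, which is impossible outside $G_2$. So the string is $\beta,\beta+\dot\theta$.
\item The only other candidate term is $\dot\theta+2\beta=2\dot\psi-\dot\theta$. It has squared length $3|\dot\psi|^2$, so it is not a root outside $G_2$.
\end{itemize}

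Therefore
$$[u_\theta(r),u_{\psi-\theta}(s)]\equiv u_\psi(C\,rs) \pmod{G(k)_{x,1}},$$
and $C=\pm1$ by Lemma \ref{lem:constants}, since $\beta-\dot\theta$ is not a root. Varying $r,s$ gives $\overline U_\psi\le\cpder$.

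\textbf{Part (2).} Now $\dot\theta$ is long and $\dot\psi$ is short, with $|\dot\theta|^2=2|\dot\psi|^2$.

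\begin{itemize}
\item Reflecting, $s_{\dot\psi}(\dot\theta)=\dot\theta-2\dot\psi$, so $2\dot\psi-\dot\theta$ is a root and $2\psi-\theta\in\Psi$.
\item Its value is $(2\psi-\theta)(x)=2\psi(x)-\theta(x)>\psi(x)>0$, so $2\psi-\theta\in\Psi_x^+$.
\item $\overline U_\psi$ and $\overline U_{2\psi-\theta}$ commute. Indeed, $3\dot\psi-\dot\theta$ is not a root outside $G_2$, and the sum $\psi+(2\psi-\theta)$ is not constant since the gradients are not opposite. So the commutator formula produces no terms of value $<1$.
\end{itemize}

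Next I compute $[u_\theta(r),u_{\psi-\theta}(s)]$, where $\beta=\dot\psi-\dot\theta$ is now a short root.

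\begin{itemize}
\item The $\dot\theta$-string through $\beta$ is $\beta,\beta+\dot\theta$, because $\beta-\dot\theta$ has pairing $-3$ with $\dot\theta^\vee$.
\item The possible terms are therefore $(i,j)=(1,1)$, giving $\psi$, and $(1,2)$, giving $2\psi-\theta$.
\item By Lemma \ref{lem:constants}, $C_{\dot\theta,\beta,1,1}=\pm1$ and $C_{\dot\theta,\beta,1,2}=M_{\beta,\dot\theta,2}=\pm1$. For the latter, the $\beta$-string through $\dot\theta$ starts at $\dot\theta$, since $2\dot\theta-\dot\psi$ is not a root.
\end{itemize}

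By the first preliminary observation, this gives
$$\cpder \ni \overline{u}_\psi(\bar r\bar s)\,\overline{u}_{2\psi-\theta}(\bar r\bar s^2)\qquad\text{for all } \bar r,\bar s\in\FF_q ,$$
where $\overline u_\psi(\bar a)$ denotes the image of $u_\psi(a)$ in $G(k)_{x,+:1}$ for any lift $a$ of $\bar a$, which is well defined by the first preliminary observation.

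\begin{itemize}
\item Taking $\bar s=1$ gives the diagonal subgroup $\overline U_{\psi,2\psi-\theta}$. This is what I expect the $q=2$ conclusion to mean, i.e.\ the notation $\overline U_{\psi,\theta}$ should read $\overline U_{\psi,2\psi-\theta}$.
\item If $q\neq2$, choose $\bar s\notin\{0,1\}$, so $\bar s^2\ne\bar s$. Identify $\overline U_\psi\times\overline U_{2\psi-\theta}$ with $\FF_q^2$; this is legitimate because the two groups commute and are both isomorphic to $\FF_q$.
\item Subtracting the diagonal element $(\bar r\bar s,\bar r\bar s)$ from $(\bar r\bar s,\bar r\bar s^2)$ yields $(0,\bar r(\bar s^2-\bar s))$. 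Varying $\bar r$, this gives all of $\overline U_{2\psi-\theta}$, and then all of $\overline U_\psi$.
\end{itemize}

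I also considered the alternative pair $\psi$, $\psi-\theta$. Its commutator constant is $\pm2$, so it contributes nothing in characteristic $2$; this is why only the diagonal survives when $q=2$.

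The main obstacle is bookkeeping rather than anything conceptual: verifying each root-string and non-root claim uniformly for types $B_n,C_n,F_4$, and confirming that no term with constant sum arises for any of the pairs used. Everything else is linear algebra over $\FF_q$.
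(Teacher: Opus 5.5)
Your proposal is correct and follows essentially the same route as the paper. You use the commutator of $u_\theta(r)$ and $u_{\psi-\theta}(s)$ with constants $\pm1$ from Lemma \ref{lem:constants}, then $s^2\equiv s$ when $q=2$, and a choice of $\bar s$ with $\bar s^2\neq\bar s$ to separate the two factors when $q>2$; the paper solves the same $2\times 2$ linear system with explicit pairs $(r,s)$. Your reading of $\overline U_{\psi,\theta}$ as $\overline U_{\psi,2\psi-\theta}$ is exactly what the paper's computation actually establishes.
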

\begin{proof}
    Since $G$ is not of type $G_2$, any root string in $\mathrm{Lie}(G)$ has length at most 3.
    \begin{enumerate}
        \item The conditions on $\theta$ imply that the root string of $\dot\theta$ through $\dot\psi$ is $\dot\psi-\dot\theta, \dot\psi$, and the root string of $\dot\psi$ through $\dot\theta$ is $\dot\theta-\dot\psi, \dot\theta$. Then the commutator formula for $\theta$ and $\psi-\theta$ gives 
        $$[u_{\theta}(r),u_{\psi-\theta}(s)] = u_\psi(C_{\dot\theta,\dot\psi-\dot\theta,1,1}(-r)s) \mod G(k)_{x,1}$$
        for all $r,s \in \cO_k$. By Lemma \ref{lem:constants}, $C_{\dot\theta,\dot\psi-\dot\theta,1,1}=\pm 1$, and we deduce that $\overline U_\psi \leq \cpder$.
        \item As $\psi \in \Psi_x^+\backslash \Delta(x)$, we have $\psi(x) \geq 2\delta(x)$, so $(2\psi-\theta)(x) \geq \delta(x)>0$ and therefore $2\psi-\theta \in \Psi_x^+$. The subgroups $\overline U_\psi$ and $\overline U_{2\psi-\theta}$ of $G(k)_{x,+:1}$ commute as an immediate consequence of the commutator formula, as $3\dot\psi-\dot\theta \not\in \Phi$ (as root strings have length at most 3). The commutator formula also gives
        $$[u_{\theta}(r),u_{\psi-\theta}(s)] =  u_\psi((-r)s) \cdot  u_{2\psi-\theta} ((-r)s^2) \mod G(k)_{x,1}$$ for all $r,s \in \cO_k$. Thus $$u_\psi((-r)s) \cdot  u_{2\psi-\theta} ((-r)s^2) \mod G(k)_{x,1} \in \cpder$$ for all $r,s \in \cO_k$. When $q=2$, $s^2=s \mod \varpi$ for all $s \in \cO_k$, and so $\overline U_{\psi,\theta} \leq \cpder$. When $q>2$, let $e \in \FF_q^\times$ with $e^2 \neq e$. Fix $a,b \in \FF_q$. Consider pairs  
        $$(r,s) \equiv\left(\frac{a-b}{e^2-e},e\right) \mod \varpi \text{ and } (r',s') \equiv \left(\frac{ae-b}{1-e},1\right) \mod \varpi.$$
        Then 
        $$[u_\theta(r),u_{\psi-\theta}(s)][u_\theta(r'),u_{\psi-\theta}(s')] = u_\psi(\tilde{a})u_{2\psi-\theta}(\tilde{b}) \mod G(k)_{x,1} \in \cpder$$
        where $\tilde{a},\tilde{b}$ are lifts of $a,b$ to $\cO_k$. Considering the cases when $a=0$ or $b=0$, it follows that $\overline U_\psi, \overline U_{2\psi-\theta} \leq \cpder$.

    \end{enumerate}
\end{proof}

\begin{lemma}\label{lem:notG2}
    Suppose $p=2$ and that $G$ is not of type $G_2$. Let $S(x) \subset \Psi_x^+$ be the set 
    \begin{align*}
    S(x) := \Delta(x) &\sqcup \{\psi+\theta: \psi,\theta \in \Delta(x), \psi+\theta \in \Psi\} \\ 
    &\sqcup \{2\psi+\theta: \psi,\theta \in \Delta(x), 2\psi+\theta \in \Psi\} 
\end{align*}
    For any $\psi \in \Psi_x^+ \backslash S(x)$, we have $\overline U_\psi \leq \cpder$.
\end{lemma}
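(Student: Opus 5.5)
We argue by descending induction on $\psi(x)$, as in the proof of the Proposition above. If $\psi(x)\geq 1$ then $\overline U_\psi$ is trivial, so there is nothing to show. Fix $m\geq 2$ and assume $\overline U_{\phi}\leq\cpder$ for every $\phi\in\Psi_x^+\backslash S(x)$ with $\phi(x)>\delta(x)m$. Take $\psi\notin S(x)$ with $\psi(x)=\delta(x)m<1$. By Corollary \ref{cor:delta_span} there is some $\theta\in\Delta(x)$ with $\psi-\theta\in\Psi_{x,\leq1}^+$. Among all such $\theta$ we choose one, and then use the commutator formula applied to $u_\theta(r)$ and $u_{\psi-\theta}(s)$.

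The argument splits according to the Cartan integers $\pair{\dot\psi}{\dot\theta^\vee}$ and $\pair{\dot\theta}{\dot\psi^\vee}$. Since $G$ is not of type $G_2$, $\dot\psi\neq\pm\dot\theta$ and $\dot\psi-\dot\theta$ is a root, so these integers are positive and lie in $\{1,2\}$, with $(2,2)$ excluded.

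\textbf{Case $(1,1)$.} Lemma \ref{lem:conditions}(1) gives $\overline U_\psi\leq\cpder$ directly.

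\textbf{Case $\pair{\dot\psi}{\dot\theta^\vee}=1$, $\pair{\dot\theta}{\dot\psi^\vee}=2$.} Here $\dot\theta$ is long and $\dot\psi$ is short. The $\dot\psi$-string through $\dot\theta$ is $\dot\theta-2\dot\psi,\ \dot\theta-\dot\psi,\ \dot\theta$. Equivalently, starting from $\psi-\theta$, the $\theta$-direction gives $\psi$ and then $2\psi-\theta$. So the commutator produces $u_\psi(\pm rs)\,u_{2\psi-\theta}(\pm rs^2)$. The key point is that $2\psi-\theta\notin\Delta(x)$. Otherwise we would have $\psi=\theta+(\psi-\theta)$ where $\psi-\theta$ has value $(m-1)\delta(x)$. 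Then $2\psi-\theta\in\Delta(x)$ forces $2m-1=1$, which is impossible for $m\geq2$. Also $(2\psi-\theta)(x)>\psi(x)$. Hence, once we know $2\psi-\theta\notin S(x)$, the induction hypothesis gives $\overline U_{2\psi-\theta}\leq\cpder$, and then $\overline U_\psi\leq\cpder$ follows because the coefficient of $rs$ is $\pm1$.

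If instead $2\psi-\theta\in S(x)$, we fall back on Lemma \ref{lem:conditions}(2). This needs extra care when $q=2$, where it only yields the diagonal subgroup $\overline U_{\psi,\theta}$. In that situation we will choose a different $\theta$ to avoid the problem.

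\textbf{Case $\pair{\dot\psi}{\dot\theta^\vee}=2$, $\pair{\dot\theta}{\dot\psi^\vee}=1$.} Now $\dot\psi$ is long and $\dot\theta$ is short. Then $\psi-2\theta$ is an affine root, and if it is positive at $x$ we instead write $\psi=2\theta+(\psi-2\theta)$. The commutator of $u_\theta(r)$ with $u_{\psi-2\theta}(s)$ gives $u_{\psi-\theta}(\pm rs)\,u_{\psi}(\pm r^2s)$, and here the constant in front of $u_\psi$ is $\pm1$ by Lemma \ref{lem:constants}. Since $\psi\notin S(x)$, the term $\psi-\theta$ is not in $\Delta(x)$ (otherwise $\psi=2\theta+(\psi-2\theta)$ with both summands in $\Delta(x)$ would put $\psi$ in $S(x)$), and we then need to handle $\overline U_{\psi-\theta}$. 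Alternatively, we use the direct decomposition $\psi=\theta+(\psi-\theta)$. In that case the relevant coefficient is $C_{\dot\theta,\dot\psi-\dot\theta,1,1}=\pm2$, which is useless when $p=2$, so this case must be steered away from.

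The main obstacle is the combinatorial step of choosing $\theta$. We must show that for $\psi\notin S(x)$ one can always find $\theta\in\Delta(x)$ with $\psi-\theta\in\Psi_x^+$ such that the pair is of type $(1,1)$, or of type $(1,2)$ with the auxiliary root $2\psi-\theta$ of larger value and not in $S(x)$. I would first try to strengthen the positivity argument of Corollary \ref{cor:delta_span}. The sum $\sum a_\theta(\dot\psi,\dot\theta)=0$ over $\bigcup_{w}w(\Pi(x))$ is used not merely to get one positive pairing but to compare long and short contributions. If every $\theta$ with positive pairing were of the bad type (short $\theta$, long $\psi$, coefficient $2$), I would show this forces $\psi$ to be $\theta+\theta'$ or $2\theta+\theta'$ with $\theta,\theta'\in\Delta(x)$, contradicting $\psi\notin S(x)$. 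If this is not clean in general, I would fall back to a case check over the affine Dynkin diagrams of types $B_n,C_n,F_4$ (the simply laced cases already being handled by the Proposition).
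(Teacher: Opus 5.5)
Your induction set-up and your treatment of the case $\pair{\dot\psi}{\dot\theta^\vee}=1=\pair{\dot\theta}{\dot\psi^\vee}$ match the paper. However, there is a genuine gap at exactly the point you call the main obstacle. Your plan is to choose $\theta$ so as to avoid the case where $\dot\psi$ is long, $\dot\theta$ is short and $\pair{\dot\psi}{\dot\theta^\vee}=2$. This cannot be done, and a case check over Dynkin diagrams will not rescue it. Take $G$ of type $C_n$ with $n\geq3$ and $x$ the barycentre of $\cC$. In the usual coordinates, $\Delta(x)=\Pi=\{1-2e_1,\,e_1-e_2,\dots,e_{n-1}-e_n,\,2e_n\}$ and $\delta(x)=1/2n$. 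The affine root $\psi=2e_{n-2}$ takes value $5\delta(x)<1$ at $x$, so $\psi\in\Psi_x^+\backslash S(x)$. Yet the only element of $\Pi$ whose gradient pairs positively with $\dot\psi$ is the short root $e_{n-2}-e_{n-1}$, with $\pair{\dot\psi}{(e_{n-2}-e_{n-1})^\vee}=2$. So your hoped-for claim, that ``every positively pairing $\theta$ is bad'' forces $\psi\in S(x)$, is false. Indeed $2e_i$ is never $\alpha+\beta$ with $C_{\alpha,\beta,1,1}=\pm1$: the only decompositions are $(e_i+e_j)+(e_i-e_j)$, with constant $\pm2$. In residue characteristic $2$ you are therefore forced to use the quadratic term of a commutator.

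What is missing is the argument that disposes of this case. First, $m\geq3$ is automatic: $m=2$ forces $\psi-\theta\in\Delta(x)$, so $\psi=\theta+(\psi-\theta)\in S(x)$. In the bad case even $m\geq4$, since $m=3$ gives $\psi-2\theta\in\Delta(x)$ and $\psi=2\theta+(\psi-2\theta)\in S(x)$. The first remark also shows that in your $(1,2)$ case $(2\psi-\theta)(x)=(2m-1)\delta(x)\geq5\delta(x)$, which exceeds every value taken on $S(x)$; so the sub-case $2\psi-\theta\in S(x)$ that you deferred never occurs. (Your parenthetical reason for $\psi-\theta\notin\Delta(x)$ is also wrong; the correct reason is simply $\psi=\theta+(\psi-\theta)$.)

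Now, having obtained $u_{\psi-\theta}(\pm rs)\,u_\psi(\pm r^2s)\in\cpder$ from $\psi=2\theta+(\psi-2\theta)$, you must still show $\overline U_{\psi-\theta}\leq\cpder$. The induction hypothesis cannot give this, because $(\psi-\theta)(x)<\psi(x)$. The paper applies Corollary \ref{cor:delta_span} a second time, to the short root $\psi-\theta$, to get $\phi\in\Delta(x)$ with $\pair{\dot\psi-\dot\theta}{\dot\phi^\vee}=1$.
\begin{itemize}
\item If $\dot\phi$ is short, Lemma \ref{lem:conditions}(1) gives $\overline U_{\psi-\theta}\leq\cpder$.
\item If $\dot\phi$ is long, Lemma \ref{lem:conditions}(2) gives this directly when $q\neq2$. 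When $q=2$ it gives only the diagonal subgroup $\overline U_{\psi-\theta,\,2\psi-2\theta-\phi}$. But $2\psi-2\theta-\phi$ has value $(2m-3)\delta(x)>\max(m,3)\delta(x)$ since $m\geq4$. So it lies outside $S(x)$, and the induction hypothesis puts $\overline U_{2\psi-2\theta-\phi}$ in $\cpder$, which yields $\overline U_{\psi-\theta}\leq\cpder$.
\end{itemize}
Finally, setting $r=1$ in your commutator gives $\overline U_\psi\leq\cpder$.
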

\begin{proof}
    Let $\psi \in \Psi_x^+ \backslash S(x)$. Then $\psi(x) \in\delta(x)\ZZ$. Moreover, as $\psi \not\in \Delta(x)$, we have $\psi(x) \geq 2\delta(x)$. If $\psi(x) \geq 1$, then by definition $\overline U_\psi$ is trivial and so contained in $\cpder$. To prove the claim by induction on integers $m \geq 2$, it suffices to prove that, assuming $\overline U_\psi \leq \cpder$ whenever $\psi(x)>m\delta(x)$, we also have $\overline U_\psi \leq \cpder$ whenever $\psi(x)\geq m\delta(x)$. Suppose $\psi(x)= m\delta(x)$. We divide into cases depending on whether $\dot\psi$ is a short or long root.

    Suppose $\dot\psi$ is short. Then by Corollary \ref{cor:delta_span}, there exists $\theta \in \Delta(x)$ such that $\pair{\dot\psi}{\dot{\theta}^\vee} >0$, which is necessarily 1 as $\dot\psi$ is short. If $\pair{\dot\theta}{\dot\psi^\vee} = 1$, then $\overline U_\psi \leq \cpder$ by Lemma \ref{lem:conditions}. Otherwise, $2\dot\psi-\dot\theta \in \Phi$. Since $\psi \not\in \Delta(x)$, we have $(2\psi-\theta)(x)>\psi(x) \geq m\delta(x)$. If $2\psi-\theta \not\in S(x)$, then the induction hypothesis implies that $\overline U_{2\psi-\theta} \leq \cpder$, and this forces $\overline U_\psi \leq \cpder$. Otherwise, $2\psi-\theta$ is of the form $\psi_1+\psi_2$ or $2\psi_1+\psi_2$ for $\psi_1,\psi_2 \in \Delta(x)$. In the first case, $\psi-\theta \in \Psi$ takes value $\delta(x)/2$ at $x$, contradicting the minimality of $\delta(x)$. In the second case, the equality $(\psi-\theta)(x)=\delta(x)$ means that $\psi-\theta \in \Delta(x)$ by definition. But $\psi=\theta+(\psi-\theta)$  contradicts our assumption that $\psi \not\in S(x)$. 

    Suppose now $\dot\psi$ is long. Then by Corollary \ref{cor:delta_span}, there exists $\theta \in \Delta$ such that $\pair{\dot\psi}{\dot{\theta}^\vee} \in \{1,2\}$. If the value is 1, the same argument as above holds to show that $\overline U_\psi \leq \cpder$. If the value is 2 then the commutator formula gives 
    \begin{equation}\label{eqn:2}
        [u_{\psi-2\theta}(r),u_{\theta}(s)] =  u_{\psi-\theta}((-r)s) \cdot  u_{\psi} ((-r)s^2) \mod G(k)_{x,1}
    \end{equation}
    for all $r,s \in \cO_k$. Consider first the case that $\psi-\theta \in S(x)$. If $\psi-\theta \in \Delta(x)$ then $\psi =\theta+ (\psi-\theta) \in S(x)$. If $\psi-\theta = \psi_1+\psi_2$ for some $\psi_1,\psi_2 \in \Delta(x)$, then $\psi-2\theta \in \Psi$ takes value $\delta(x)$ at $x$, so is in $\Delta(x)$, and so $\psi = 2\theta + (\psi-2\theta) \in S(x)$. The case that $\psi-\theta = 2\psi_1+\psi_2$ for some $\psi_1,\psi_2 \in \Delta(x)$ does not occur as the gradient of $\psi-\theta$ is a short root, and the gradient of $2\psi_1+\psi_2$ is a long root. In any case, we have a contradiction to $\psi \not\in S(x)$. Finally, consider the case that $\psi-\theta \not\in S(x)$. The gradient $\dot\psi-\dot\theta$ is a short root, and again by Corollary \ref{cor:delta_span} there exists $\phi \in \Delta(x)$ such that $\pair{\dot\psi-\dot\theta}{\dot\phi^\vee}=1$. If $\dot\phi$ is a short root, then Lemma \ref{lem:conditions} implies that $\overline U_{\psi-\theta} \leq \cpder$. Then $\overline U_\psi \leq \cpder$ by (\ref{eqn:2}). Otherwise, $2\dot\psi-2\dot\theta-\dot\phi \in \Phi$. We have $(2\psi-2\theta-\phi)(x) = (2m-3)\delta(x)$. If $m>3$ then $\overline U_{2\psi-2\theta-\phi} \leq \cpder$ by the induction hypothesis, and then Lemma \ref{lem:conditions} forces $\overline U_{\psi-\theta} \leq \cpder$, and again we have $\overline U_\psi \leq \cpder$. The case $m=2$ implies $\psi-\theta \in \Delta(x)$, contradicting $\psi-\theta \not\in S(x)$. The case $m=3$ implies $\psi-2\theta \in \Delta(x)$, and then $\psi = 2\theta+(\psi-2\theta)$ contradicts $\psi \not\in S(x)$.
\end{proof}

We will show that the abelianisation $G(k)_{x,+:1}^\ab$ admits the structure of an $\FF_q$-vector space. Moreover, a basis for this vector space is naturally indexed by a finite set $\cS(x)$ that we will now define algorithmically. The set $\cS(x)$ consists of elements that are either affine roots $\psi$ or tuples of affine roots $(\psi_1,\psi_2,\dots,\psi_n)$. We continue to assume that $G$ is not of type $G_2$ and $p=2$.

We initialise $\cS(x)$ by taking 
\begin{align*}
    \cS(x)' := \Delta(x) &\sqcup \{\psi+\theta: \psi,\theta \in \Delta(x), \psi+\theta \in \Psi\} \\ 
    &\sqcup \{2\psi+\theta: \psi,\theta \in \Delta(x), 2\psi+\theta \in \Psi\} 
\end{align*}
to be the set $S(x)$ from the previous lemma.

For all pairs $(\psi,\theta) \in \Delta(x)^2$ we apply the following. Suppose $\dot\psi+\dot\theta \in \Phi$ and that $\dot\psi-\dot\theta \not\in \Phi$.

\begin{itemize}
    \item If $\pair{\dot\psi}{\dot\theta^\vee} = \pair{\dot\theta}{\dot\psi^\vee} = -1$, then remove $\psi+\theta$ from $\cS(x)'$.
    \item If $\pair{\dot\psi}{\dot\theta^\vee} = -1$ and $\pair{\dot\theta}{\dot\psi^\vee} = -2$, then add $(\psi+\theta,2\psi+\theta)$ to $\cS(x)'$.
\end{itemize}

Now, for $\epsilon \in \Delta(x)$ and $\eta \in \Delta_2(x)$, if $\epsilon+\eta \in \cS(x)'$ and $\eta-\epsilon \not\in \Delta(x)$, then remove $\epsilon+\eta$ from $\cS(x)'$.

If $(\psi+\theta,2\psi+\theta) \in \cS(x)'$ and $\psi+\theta \not \in \cS(x)'$ or $2\psi+\theta \not\in \cS(x)'$, remove all of $\psi+\theta,2\psi+\theta,(\psi+\theta,2\psi+\theta)$ from $\cS(x)'$. Repeat this step over each pair $(\psi+\theta,2\psi+\theta) \in \cS(x)'$ until $\cS(x)'$ stabilises.

Finally, define a graph on the set of (singleton) affine roots in $\cS(x)'$ by adding an edge between $\psi$ and $\theta$ if $(\psi,\theta) \in \cS(x)'$. If a connected component of this graph consists of affine roots $\psi_1,\dots,\psi_n$, with $n>1$, add the (unordered) tuple $(\psi_1,\dots,\psi_n)$ to $\cS(x)'$ and remove all edges $(\psi_i,\psi_j)$ and vertices $\psi_i$ from $\cS(x)'$ appearing in this connected component. After doing this for each connected component, when $q=2$ we take $\cS(x)$ to be the resulting set $\cS(x)'$. When $q>2$ we take $\cS(x)$ to be the subset of (singleton) affine roots in $\cS(x)'$.

\begin{defn}
    We define the $\FF_q$-vector space $\cV(x)$ as the direct sum of lines $V_s$ over $s \in \cS(x)$, where when $s=\psi$ is an affine root this notation is consistent with the notation $V_\psi$, and when $s$ is a tuple $(\psi_1,\dots,\psi_n)$ we define $V_s$ by 
    $$V_s = \left(\bigoplus\limits_{i=1}^n V_{\psi_i}\right) \bigg{/} \langle \overline{u}_{\psi_i}(r)+\overline{u}_{\psi_j}(r) : i \neq j \text{ and } r \in \cO_k/\varpi \cO_k\rangle.$$
\end{defn}

\begin{notn}
    For $\psi \in \Psi_x^+$, write $\nu_\psi: U_\psi \to \cV(x)$ for the homomorphism that is identically 0 if $\psi \not\in \cS(x)$, is the quotient map $U_\psi \to V_\psi$ if $\psi \in \cS(x)$, and is the composition $$U_\psi \twoheadrightarrow V_\psi \to V_s$$ when $\psi$ belongs to a tuple $s \in \cS(x)$.
\end{notn}

\begin{prop}
    Suppose $p=2$ and $G$ is not of type $G_2$. There is a unique homomorphism $\nu: G(k)_{x,+} \to \cV(x)$ that is trivial on $T_1$ and is given by $\nu_\psi$ on each $U_\psi$. Moreover, $\nu$ is surjective and realises $\cV(x)$ as the abelianisation $V(x)$ of $G(k)_{x,+:1}$.
\end{prop}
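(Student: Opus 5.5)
The plan is to prove two inclusions: first that a homomorphism $\nu$ exists (so $\ker\nu \supseteq$ the derived subgroup together with $G(k)_{x,1}$), and then that $\ker\nu$ is no bigger than that, i.e. $\ker \nu \le (G(k)_{x,+})_\der G(k)_{x,1}$.

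\textbf{Uniqueness and surjectivity.} Uniqueness is immediate, since $G(k)_{x,+}$ is generated by $T_1$ and the $U_\psi$ with $\psi \in \Psi_x^+$. Surjectivity is also clear: each line $V_s$, for $s\in\cS(x)$, is the image of some $U_\psi$ with $\psi$ equal to $s$ or $\psi \in s$.

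\textbf{Existence.} I would present $G(k)_{x,+:1}$ by generators and relations. Fix an ordering of $\Psi_{x,<1}^+$ by increasing value at $x$. Every element of $G(k)_{x,+}$ can then be written as an ordered product $t\prod_\psi u_\psi(r_\psi)$ with $t\in T_1$, and this product is unique modulo $G(k)_{x,1}$. This is the standard product decomposition of Moy--Prasad groups, which holds since $G$ is split.

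Define $\nu$ on such products by $\nu(t\prod u_\psi(r_\psi)) = \sum_\psi \nu_\psi(u_\psi(r_\psi))$. Since the target is abelian, showing that $\nu$ is a homomorphism reduces to two checks:
\begin{itemize}
\item $\nu$ kills every commutator $[u_\psi(r),u_\phi(s)]$ and every $[t,u_\psi(r)]$ with $t \in T_1$;
\item $\nu$ kills $G(k)_{x,1}$.
\end{itemize}

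The second check is trivial, because $\cS(x)\subset \Psi^+_{x,<1}$ up to tuples. The torus commutators land in $U_{\psi+1}\le G(k)_{x,1}$. If $\psi+\phi$ is constant, the commutator lies in $G(k)_{x,1}$ by Lemma \ref{lem:sl2}.

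So the real content is the Chevalley formula $[u_\psi(r),u_\phi(s)]\equiv \prod u_{i\psi+j\phi}(C(-r)^is^j)$, for which we need $\sum_{i,j}\nu_{i\psi+j\phi}(C(-r)^is^j)=0$ in $\cV(x)$. Since $p=2$, only terms with odd $C$ survive. I would then split into cases according to whether $\psi,\phi\in\Delta(x)$, mirroring the steps of the algorithm:
\begin{itemize}
\item If one of $\psi,\phi$ lies outside $\Delta(x)$, then $i\psi+j\phi$ has value at least $3\delta(x)$. It is either not in $\cS(x)$, or it is of the form $\epsilon+\eta$ with $\eta\in\Delta_2(x)$. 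The removal rule for $\epsilon+\eta$ with $\eta-\epsilon\notin\Delta(x)$ is designed exactly to handle the latter, while the case $\eta-\epsilon\in\Delta(x)$ has $C$ even.
\item If $\psi,\theta\in\Delta(x)$ with $\pair{\dot\psi}{\dot\theta^\vee}=\pair{\dot\theta}{\dot\psi^\vee}=-1$, then $C=\pm1$ and $\psi+\theta$ has been removed.
\item In the $(-1,-2)$ case, the commutator gives $u_{\psi+\theta}(rs)u_{2\psi+\theta}(r^2s)$ up to sign. If $q=2$, this maps to $0$ in the tuple line $V_s$. If $q>2$, both roots have been removed, as Lemma \ref{lem:conditions} anticipates.
\item When $\dot\psi-\dot\theta\in\Phi$, the relevant constants are even by Lemma \ref{lem:constants}.
\end{itemize}
The iterative removal and connected-component steps must be checked for consistency, namely that the tuple identifications respect all relations simultaneously.

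\textbf{Kernel.} Show that $\ker\nu\le (G(k)_{x,+})_\der G(k)_{x,1}$. By Lemma \ref{lem:notG2}, $\overline U_\psi\le\cpder$ for $\psi\notin S(x)$. Each element removed by the algorithm lies in $\cpder$, by Lemma \ref{lem:conditions}(1) or by the explicit commutator computations above. Each tuple identification $\overline u_{\psi_i}(r)\overline u_{\psi_j}(r)$ lies in $\cpder$, by Lemma \ref{lem:conditions}(2) chained along paths in the graph. Finally, $T_1$ lies in $G(k)_{x,1}$. Hence $G(k)_{x,+:1}/\cpder$ is a quotient of $\cV(x)$, and comparing cardinalities gives the isomorphism.

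\textbf{Main obstacle.} The step I expect to be hardest is the existence check: verifying exhaustively that every surviving commutator term with odd constant is killed by $\nu$, especially for roots $\epsilon+\eta$ of value $3\delta(x)$ and for the interaction of the iterated removal rule with tuples. I would attempt this by a case analysis on root lengths (for types $B_n$, $C_n$, $F_4$ root strings have length at most 3), following the structure of the proof of Lemma \ref{lem:notG2}.
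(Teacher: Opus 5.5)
Your proposal is correct and takes essentially the same approach as the paper. Existence of $\nu$ comes from checking the Chevalley commutator relations against the construction of $\cS(x)$; you justify that reduction with a normal-form collection argument, where the paper cites \cite[Theorem 4]{Gas20}. The universal property you make explicit via Lemmas \ref{lem:notG2} and \ref{lem:conditions} plus a cardinality count, where the paper just says the factorisation holds ``by construction''.

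Your case analysis is more detailed than the paper's. The consistency check you flag as the main obstacle is automatic: each commutator relation involves at most two affine roots appearing in $\cS(x)$. When there are two (the $(-1,-2)$ case), they are joined by an edge of the graph, so they lie in a common tuple, or for $q>2$ are both discarded.
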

\begin{proof}
    To show that $\nu$ is well-defined, by \cite[Theorem 4]{Gas20} (we may replace $\CC^\times$ with any abelian group, in particular $\cV(x)$), it suffices to show that, for all $\phi,\theta \in \Psi_{x,< 1}^+$, we have 
    \begin{equation}\label{eqn:commutator}
        0 = \sum\limits_{i,j>0: i\phi+j\theta \in \Psi_{x,< 1}^+} \nu_{i\phi+j\theta}(C_{\dot\phi,\dot\theta,i,j}(-r)^i s^j)
    \end{equation}
    in $\cV(x)$, for all $r,s \in \cO_k$. As $\nu_\psi$ is trivial when $\psi$ does not appear in $\cS(x)$ (as a singleton or a tuple), it suffices to check that (\ref{eqn:commutator}) holds whenever $\phi,\theta \in \Psi_{x,< 1}^+$ are such that there exists $i,j>0$ such that $i\phi+j\theta$ appears in $\cS(x)$. For any $\psi$ appearing in $\cS(x)$ we have $\psi(x) \leq 3\delta(x)$. So we must have $\phi,\theta \in \Delta(x),\Delta_2(x)$. Our tedious construction of $\cS(x)$ was designed exactly so that (\ref{eqn:commutator}) holds for all $\phi,\theta \in \Delta(x),\Delta_2(x)$. As $\nu_\psi$ is surjective when $\psi$ appears in $\cS(x)$, the homomorphism $\nu$ is surjective. Any homomorphism $\nu'$ from $G(k)_{x,+}$ to an abelian group satisfies the analogue of (\ref{eqn:commutator}), so that by construction $\nu'$ must factor through $\nu$. This is the universal property defining $\cV(x)$ as the abelianisation of $G(k)_{x,+:1}$.
\end{proof}

\begin{example}
    Suppose $x$ is the barycentre of $\cC$. We describe $\cS(x)$ in this setting. Initialise $\cS(x)'$ as described above. The set $\Delta(x)$ is simply the set $\Pi$ of simple affine roots. Any $\psi \in \Psi_x^+=\Psi^+$ may be expressed as a unique non-negative integer linear combination in $\Pi$. Suppose $\psi \in \cS(x)'$ is of the form $\psi_1+\psi_2$ for $\psi_1,\psi_2\in \Pi$. If $2\psi_1+\psi_2 \not \in \Psi$ and $\psi_1+2\psi_2 \not\in \Psi$, corresponding to the equalities $\pair{\dot\psi_1}{\dot\psi_2^\vee}= \pair{\dot\psi_2}{\dot\psi_1^\vee}=-1$, the commutator relation for $\psi_1$ and $\psi_2$ implies that $\overline{U}_\psi \leq \cpder$, which is why we remove $\psi=\psi_1+\psi_2$ from $\cS(x)'$ in the algorithm.

    Suppose $\psi \in \cS(x)'$ is of the form $2\psi_1+\psi_2$ for $\psi_1,\psi_2 \in \Psi$. Then we also have $\psi_1+\psi_2 \in \cS(x)'$. When $q>2$, the commutator relation for $\psi_1$ and $\psi_2$ implies that $\overline{U}_{\psi_1+\psi_2},\overline{U}_{2\psi_1+\psi_2} \leq \cpder$. When $q=2$, we instead have $\overline{U}_{\psi_1+\psi_2,2\psi_1+\psi_2}\leq \cpder$. The only simple affine roots $\psi_i,\psi_j$ such that $\psi_1+\psi_2$ or $2\psi_1+\psi_2$ appears as an integer linear combination of $\psi_i$ and $\psi_j$ are $\psi_1,\psi_2$ themselves. This explains why, in our algorithm, we remove $\psi_1+\psi_2$ and $2\psi_1+\psi_2$ from $\cS(x)'$ and, when $q=2$ only, we add back the pair $(\psi_1+\psi_2,2\psi_1+\psi_2)$. 

    After applying the above for all $\psi \in \cS(x)'$, the result is $\cS(x)$. For general points $x$, linear dependencies amongst $\Delta(x)$ lead to the additional steps described in the algorithm.
\end{example}

For an example when $G$ is of type $F_4$ and $x$ is not the barycentre of $\cC$, and for more details in the setting where $G$ is of type $G_2$, please see the author's forthcoming thesis. As $G_2$ has small rank, it is not difficult to compute the abelianisation $V(x)$ of $G(k)_{x,+:1}$ by hand for the few possibilities for $x$. This follows the same principles as the algorithm above. We still find that $V(x)$ admits the structure of an $\FF_q$-vector space. Some explicit examples are given in Section 4.

\section{Compactly induced representations and supercuspidals}\label{sec:intertwine}
We continue to let $G$ be a simple split reductive group over $k$ and $x$ be the barycentre of a facet in the closure of $\cC$. In the previous section we described the abelianisation $V(x)$ of $G(k)_{x,+:1}$. 

For the remainder of this section, fix a nontrivial additive character $\chi: \FF_q \to \CC^\times$. For any $\lambda \in V(x)^\vee$, we write $\chi_\lambda: G(k)_{x,+}\to \CC^\times$ for the inflation of $\chi \circ \lambda : V(x) \to \CC^\times$. We consider the compactly induced representation
$$\pi_\lambda := \ind_{G(k)_{x,+}}^{G(k)} \chi_\lambda$$
and describe a condition on $\lambda$ for which $\pi_\lambda$ decomposes as a finite direct sum of irreducible supercuspidal representations.

\subsection{Generalities on intertwining}

\begin{defn}
    Let $K$ be an open subgroup of $G(k)$ containing $Z(k)$. Let $(\tau,W)$ be a smooth finite-dimensional complex representation of $K$. For $g \in G(k)$, write $K^g=g^{-1}Kg$ and let $\tau^g:K^g \to \GL(W)$ be the representation given by $\tau^g(k^g)=\tau(k)$. The intertwining of $\tau$ is the set $$I(G,K,\tau) = \{g \in G(k): \Hom_{K \cap K^g}(\tau,\tau^g) \neq \{0\}\}.$$ This is a union of $K$-double cosets.
\end{defn}

% I think in Reeder-Yu don't need J to contain Z, see where Lemma 2.1 i s used in 2.2. Note: finite index is not a problem since Z is finite, and this assumption is made in that G is semisimple, e.g. GL_2 is not semisimple.

\begin{lemma}\label{lem:mackey_basic}
    Let $K$ be an open subgroup of $G(k)$ containing $Z(k)$. Let $(\tau,W)$ be a smooth finite-dimensional irreducible complex representation of $K$. Then $\ind_K^{G(k)} \tau$ is an irreducible representation of $G(k)$ if and only if $I(G,K,\tau)=K$.
\end{lemma}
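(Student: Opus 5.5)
The plan is the standard Mackey-theoretic argument: compute the endomorphism algebra of the induced representation and show it is one-dimensional exactly when the intertwining is trivial. Write $\pi=\ind_K^{G(k)}\tau$. Since $K$ is open and contains $Z(k)$, and $\tau$ is finite dimensional, the induced representation is smooth. By Frobenius reciprocity for compact induction from an open subgroup we have
$$\End_{G(k)}(\pi)\cong \Hom_K(\tau,\pi|_K).$$
Mackey's formula decomposes $\pi|_K$ as a direct sum over double cosets $KgK$ of $\ind_{K\cap K^{g^{-1}}}^K \tau^{g^{-1}}$, suitably conjugated. Frobenius reciprocity on each summand then gives
$$\Hom_K(\tau,\pi|_K)\cong\bigoplus_{KgK}\Hom_{K\cap K^g}(\tau,\tau^g).$$
Here the direct sum, as opposed to a product, is valid because $\tau$ is finitely generated. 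Since $\tau$ is irreducible, the identity coset contributes $\End_K(\tau)=\CC$ by Schur's lemma; this needs $K$ to be countable at infinity, which holds since it is open in $G(k)$. So $\dim\End_{G(k)}(\pi)=1$ if and only if $I(G,K,\tau)=K$.

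For the forward direction, if $\pi$ is irreducible, Schur's lemma for smooth irreducible representations of $G(k)$ gives $\End_{G(k)}(\pi)=\CC$. This applies because $G(k)$ is locally profinite and countable at infinity, so every irreducible smooth representation has countable dimension. Hence every nonidentity double coset contributes zero, that is, $I(G,K,\tau)=K$.

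The converse is the main obstacle, because one-dimensional endomorphism algebra does not by itself imply irreducibility. Assume $I(G,K,\tau)=K$ and let $0\neq\sigma\subseteq\pi$ be a subrepresentation. I would first show that $\Hom_K(\tau,\sigma|_K)\neq0$. Take a nonzero $f\in\sigma$ and translate it so that $f(1)\neq0$. Averaging over the compact subgroup $K\cap Z(k)\backslash\cdots$ is not needed: instead, project $\sigma|_K$, which is semisimple provided $K$ is compact modulo centre, onto its $\tau$-isotypic part. This reduces the claim to showing that functions supported on $K$ lie in $\sigma$.

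Concretely, I would use the intertwining hypothesis in the following form: the $\tau$-isotypic component of $\pi|_K$ is exactly the space of functions supported on $K$, which is isomorphic to $\tau$, since the other Mackey summands have no $\tau$-component. So the isotypic component $\pi^\tau$ is irreducible, and $\pi$ is generated by it. Given $0\neq\sigma$, some $G(k)$-translate of an element of $\sigma$ has nonzero restriction to $K$, and applying a Hecke-type projector gives a nonzero vector of $\sigma\cap\pi^\tau$. This projector is $e=\frac{\dim\tau}{\mathrm{vol}}\int_{K}\overline{\mathrm{tr}\,\tau(k)}\,\pi(k)\,dk$ if $K$ is compact; when $K$ is only compact modulo the centre, I would first twist by the central character, noting that $Z(k)$ is finite here because $G$ is simple. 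Then $\sigma\supseteq\pi^\tau$, which generates $\pi$, so $\sigma=\pi$.

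The delicate step is justifying that $e$ kills exactly the non-identity Mackey summands and is nonzero on $\sigma$. Both follow from smoothness together with the decomposition above: $e$ is the projection onto the $\tau$-isotypic part, and a nonzero $f\in\sigma$ can be translated so that its restriction to $K$ has a nonzero $\tau$-component after convolving. This is standard, as in Bushnell--Henniart, and I would cite it rather than redo it.
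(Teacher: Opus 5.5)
Your proposal is correct and is essentially the standard Frobenius-reciprocity/Mackey argument behind \cite[Theorem 11.4]{BH1} and \cite{Kut77}, which the paper cites in place of a proof. Like that reference, your argument tacitly uses that $K$ is compact (equivalently, compact modulo the finite group $Z(k)$), both to identify $\Hom_K(\tau,\ind_{K\cap K^g}^K\tau^g)$ with $\Hom_{K\cap K^g}(\tau,\tau^g)$ and to get semisimplicity of $\pi|_K$ in the converse; this holds in every application in the paper, though it is not written into the statement.
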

\begin{proof}
    This is a generalisation of \cite[Theorem 11.4]{BH1}. See also \cite{Kut77}.
\end{proof}

More generally, suppose that $J$ is a compact open subgroup of $G(k)$ that is normal in some open subgroup $H$ of $G(k)$ that contains $Z(k)$ and is compact modulo $Z(k)$. For a smooth character $\chi: J \to \CC^\times$, define the subgroup $H_\chi=\{h \in H: \chi^h=\chi\}$ of $H$. Then $J$ is normal in $H_\chi$, and the quotient $A_\chi = H_\chi/J$ is a finite group when $Z(k)$ is compact, as is the case when $G$ is simple. Define $$\cH_\chi := \End_{H_\chi}(\ind_J^{H_\chi} \chi).$$

By Mackey theory for finite groups, there is a bijection $\rho \mapsto \chi_{\rho}$ between the set $\mathrm{Irr}(\cH_\chi)$ of simple $\cH_\chi$-modules to the set of irreducible constituents of $\ind_{J}^{H_\chi} \chi$ such that 
$$\ind_{J}^{H_\chi} \chi = \bigoplus\limits_{\rho \in \mathrm{Irr}(\cH_\chi)} (\dim \rho) \cdot \chi_{\rho}.$$
The $\chi_{\rho}$ are $\chi$-isotypic when restricted to $J$ and induce irreducibly to $H$.

% The Mackey theory is telling us the $\chi$-isotypic part, and from this also the fact the the number of copies is dim \rho. The part about simple modules is just Wedderburn theory - given a decomposition of the induction we can write down the structure of the endomorphism algebra, and the simple modules (acting on itself is the permutation rep, and any simple modules appears in the permutation rep)

\begin{lemma}\label{lem:mackey}
    Assume that $I(G,J,\chi)=H_\chi$. Then the following hold:
    \begin{itemize}
        \item The representation $\ind_J^{G(k)} \chi$ has a finite direct sum decomposition $$\ind_J^{G(k)} \chi = \bigoplus\limits_{\rho \in \mathrm{Irr}(\cH_\chi)} (\dim \rho) \cdot \ind_{H_\chi}^{G(k)} \chi_\rho.$$ 
        %where the $\chi_\rho$ are the distinct irreducible constituents of $\ind_{J}^{H_\chi} \chi$.
        \item For each $\rho \in \mathrm{Irr}(\cH_\chi)$, the compactly induced representation $\ind_{H_\chi}^{G(k)} \chi_\rho$ is irreducible and supercuspidal.
        \item If $\rho,\rho'$ are inequivalent simple modules for $\cH_\chi$, then $\ind_{H_\chi}^{G(k)}\chi_\rho$ and $\ind_{H_\chi}^{G(k)}\chi_{\rho'}$ are inequivalent representations of $G(k)$.
        
    \end{itemize}
\end{lemma}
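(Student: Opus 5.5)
We prove the three items in turn by transitivity of induction, applying Lemma \ref{lem:mackey_basic} to the open subgroup $K=H_\chi$ and to each irreducible constituent $\chi_\rho$. Since $J \trianglelefteq H_\chi$ has finite index and $A_\chi$ is finite, $\ind_J^{H_\chi}\chi$ is finite-dimensional. The Mackey/Wedderburn decomposition recalled above then gives
$$\ind_J^{H_\chi}\chi=\bigoplus_\rho (\dim\rho)\,\chi_\rho .$$
Compact induction commutes with finite direct sums, so transitivity $\ind_J^{G(k)}=\ind_{H_\chi}^{G(k)}\circ\ind_J^{H_\chi}$ yields the first item at once. We also note that $H_\chi\supseteq Z(k)$. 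Indeed $Z(k)\le H$, and $Z(k)$ centralises $J$, so it fixes $\chi$.

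For the second item, the key step is to show $I(G,H_\chi,\chi_\rho)=H_\chi$. Take $g\in I(G,H_\chi,\chi_\rho)$. A nonzero element of $\Hom_{H_\chi\cap H_\chi^g}(\chi_\rho,\chi_\rho^g)$ restricts to a nonzero element of $\Hom_{J\cap J^g}(\chi_\rho|_{J\cap J^g},\chi_\rho^g|_{J\cap J^g})$. Now $\chi_\rho|_J$ is $\chi$-isotypic, and $\chi_\rho^g|_{J^g}$ is $\chi^g$-isotypic. Both restrictions to $J\cap J^g$ are therefore sums of copies of the characters $\chi$ and $\chi^g$ respectively, and a nonzero intertwiner forces $\chi=\chi^g$ on $J\cap J^g$. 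Hence $g\in I(G,J,\chi)=H_\chi$. The reverse inclusion is trivial.

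Since $\chi_\rho$ is irreducible and finite-dimensional and $H_\chi\supseteq Z(k)$ is open, Lemma \ref{lem:mackey_basic} gives irreducibility of $\ind_{H_\chi}^{G(k)}\chi_\rho$. For supercuspidality, $H_\chi$ is compact modulo $Z(k)$, and $Z(k)$ is compact because $G$ is simple, so $H_\chi$ is compact. Hence $\ind_{H_\chi}^{G(k)}\chi_\rho$ has compactly supported matrix coefficients: the vector supported on $H_\chi$ gives a coefficient supported in $H_\chi$, and irreducibility propagates this to all coefficients. An irreducible smooth representation with compactly supported matrix coefficients is supercuspidal, by the standard criterion (Harish-Chandra/Jacquet).

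For inequivalence, suppose $\ind_{H_\chi}^{G(k)}\chi_\rho\cong\ind_{H_\chi}^{G(k)}\chi_{\rho'}$. Frobenius reciprocity and Mackey's formula give
$$\Hom_G\bigl(\ind\chi_\rho,\ind\chi_{\rho'}\bigr)=\bigoplus_{g\in H_\chi\backslash G/H_\chi}\Hom_{H_\chi\cap H_\chi^g}\bigl(\chi_\rho,\chi_{\rho'}^g\bigr),$$
valid since the induced representations are admissible. By the same $\chi$-isotypic argument as above, only $g\in H_\chi$ contributes. We therefore get $\Hom_{H_\chi}(\chi_\rho,\chi_{\rho'})\ne0$, so $\chi_\rho\cong\chi_{\rho'}$. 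Since $\rho\mapsto\chi_\rho$ is a bijection, $\rho\cong\rho'$.

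The main technical point is justifying the Mackey/Frobenius decomposition for compact induction from an open compact subgroup to the $\Hom$-space in the inequivalence step. I would handle it by the standard identification $\Hom_G(\ind_K\sigma,\pi)\cong\Hom_K(\sigma,\pi|_K)$ for open $K$, followed by a double-coset decomposition of $(\ind_{H_\chi}\chi_{\rho'})|_{H_\chi}$.
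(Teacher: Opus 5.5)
Your proposal is correct and is essentially the standard Mackey-theoretic argument of \cite[Lemma 2.2]{RY14}, which is all the paper invokes for this lemma. In both, you restrict an intertwiner of $\chi_\rho$ (or one between $\chi_\rho$ and $\chi_{\rho'}$) to $J\cap J^g$, use $\chi$-isotypy to force $g\in I(G,J,\chi)=H_\chi$, and then conclude via Lemma \ref{lem:mackey_basic} and the double-coset formula. One small correction: the double-coset formula for the $\Hom$-space in your last step does not need admissibility, only that $\chi_\rho$ is finite-dimensional, so that $\Hom_{H_\chi}(\chi_\rho,-)$ commutes with the direct sum over $H_\chi\backslash G(k)/H_\chi$, exactly as your final paragraph indicates.
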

\begin{proof}
    See, for example, \cite[Lemma 2.2]{RY14}.
\end{proof}

The following structural result will be useful in determining intertwining sets.

\begin{prop}\label{prop:bruhat}
    Let $x,y$ be points in $\cA$. We have a version of the affine Bruhat decomposition, 
    $$G(k)=G(k)_{x,0}N(k)G(k)_{y,0}.$$
    A set of double coset representatives is contained in a set of lifts of the Iwahori--Weyl group $W(G)$ to $N(k)$.
\end{prop}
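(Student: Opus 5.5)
The plan is to reduce to the classical affine Bruhat decomposition for the Iwahori subgroup $\cI = G(k)_{x_0',0}$ attached to the alcove $\cC$. Here $x_0'$ denotes the barycentre of $\cC$; any point of $\cC$ would do.

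First, I would invoke the Iwahori--Bruhat decomposition from Bruhat--Tits theory \cite{BT2}:
$$G(k)=\bigsqcup_{w \in W(G)} \cI\, \dot w\, \cI,$$
where $\dot w$ ranges over lifts of $W(G)=N(k)/T_{\cO_k}(\cO_k)$ to $N(k)$. This holds because $G$ is split, so $(\cI, N(k))$ gives the generalised Tits system whose Iwahori--Weyl group is $W(G)$, with $\Omega$ accounting for the non-simply-connected part.

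Second, I would handle arbitrary $x,y\in\cA$. Every point of $\cA$ lies in the closure of some alcove, and $W(\Psi)\le W(G)$ acts transitively on alcoves of $\cA$. So there exist $n_x,n_y\in N(k)$ with $x\in n_x\overline{\cC}$ and $y\in n_y\overline{\cC}$. Since the parahoric at a point of the closure of an alcove contains the Iwahori of that alcove, we get
$$G(k)_{x,0}\supseteq n_x\cI n_x^{-1}, \qquad G(k)_{y,0}\supseteq n_y\cI n_y^{-1}.$$
Here we use $G(k)_{x,0}\supseteq T_{\cO_k}(\cO_k)$ together with the root subgroups $U_\psi$ for $\psi(x)\ge 0$, and the fact that $\psi\ge0$ on an alcove implies $\psi\ge0$ on its closure.

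Third, conjugating the decomposition gives
$$G(k)=n_x\, G(k)\, n_y^{-1}=\bigcup_w (n_x\cI n_x^{-1})(n_x\dot w n_y^{-1})(n_y\cI n_y^{-1}) \subseteq G(k)_{x,0}N(k)G(k)_{y,0}.$$
Moreover, the elements $n_x\dot w n_y^{-1}$, as $w$ ranges over $W(G)$, again form a set of lifts of $W(G)$ to $N(k)$. This is because left and right multiplication by fixed elements permute $W(G)$, and $T_{\cO_k}(\cO_k)$ is normal in $N(k)$. Consequently, choosing one representative per $(G(k)_{x,0},G(k)_{y,0})$-double coset from among these elements gives a set of double coset representatives contained in a set of lifts of $W(G)$.

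The main obstacle is the first step: justifying the Iwahori--Bruhat decomposition with the full Iwahori--Weyl group $W(G)$, rather than only the affine Weyl group, for a possibly non-simply-connected split $G$. I would cite \cite{BT2} (or the standard treatment via the Tits system for $G(k)$ and its extension by $\Omega$) rather than reprove it. The remaining steps are formal.
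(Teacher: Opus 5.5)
Your proposal is correct and follows essentially the paper's route: you reduce to the Iwahori--Bruhat decomposition (which the paper takes from Tits and Iwahori--Matsumoto), and you use that the parahoric at any point of a closed alcove contains that alcove's Iwahori, transported around by $N(k)$. The differences are cosmetic. You conjugate the standard Iwahori into $G(k)_{x,0}$ and $G(k)_{y,0}$ separately via $n_x,n_y$, whereas the paper first treats $x,y$ in a common closed alcove and then moves $y$ by a single $n\in N(k)$; you also spell out why the representatives $n_x\dot w n_y^{-1}$ form a set of lifts of $W(G)$, which the paper leaves implicit.
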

\begin{proof}
    For the case when $x=y$ is the barycentre of an alcove, see \cite[3.3.1]{Tit79}. See also \cite{IM1}. This case also implies the decomposition holds when $x$ and $y$ lie in the closure of the same alcove, as if the barycentre of such an alcove is $z$, then $G(k)_{x,0}, G(k)_{y,0} \supset G(k)_{z,0}$. In general, there exists $n \in N(k)$ such that $x$ and $nyn^{-1}$ lie in the closure of the same alcove, and then 
    $$G(k) = G(k)_{x,0} N(k) G(k)_{nyn^{-1},0} = G(k)_{x,0} N(k) n G(k)_{y,0} n^{-1} = G(k)_{x,0}N(k)G(k)_{y,0} n^{-1}$$
    so 
    $$G(k)=G(k)n = G(k)_{x,0}N(k)G(k)_{y,0}.$$
\end{proof}

\subsection{Epipelagic representations}\label{sec:epipelagic}

We briefly summarise the construction of epipelagic representations due to \cite{RY14}. Let $x$ be the barycentre of a facet in $\cA$. Write $V_x$ for the quotient $G(k^\ur)_{x,+:++}$. Then $V_x$ and its dual, $V_x^\vee$, admit algebraic representations of $\GG_x$ defined over $\FF_q$. Following \cite{Mum77}, we make the following definition from geometric invariant theory:

\begin{defn}
    We say that $\lambda \in V_x^\vee$ is a stable functional for the action of $\GG_x$ if the following hold:
    \begin{itemize}
        \item The orbit $\GG_x \cdot \lambda$ in $V_x^\vee$ is Zariski-closed.
        \item The stabiliser of $\lambda$ in $\GG_x$ is finite, as an algebraic group.
    \end{itemize}
\end{defn}

Motivated by the Hilbert--Mumford criterion for stability, we make the following definitions:

\begin{defn}
    Let $\sigma: G(k)_{x,+} \to \CC^\times$ be a character. We say that $\psi \in \Psi$ is in the support $\supp(\sigma)$ of $\sigma$ if $U_\psi \leq G(k)_{x,+}$ (equivalently, $\psi(x)>0$) and $\sigma(U_\psi) \neq 1$. In the notation of Section 2, we say that a tuple $(\psi_1,\dots,\psi_n)$ is in $\supp(\sigma)$ if $U_{\psi_i} \leq G(k)_{x,+}$ and $\sigma(U_{\psi_i}) \neq 1$ for each $\psi_i$. If $\sigma = \chi_\lambda$ for some nontrivial $\chi: \FF_q \to \CC^\times$ and $\lambda:G(k)_{x,+} \to \FF_q$ (as in the cases $\lambda \in V_x^\vee$ or $V(x)^\vee$), define $\supp(\lambda) = \supp(\chi_\lambda)$. As $\chi$ is nontrivial, this does not depend on the choice of $\chi$.
\end{defn}

Suppose now that $\lambda \in V_x^\vee(\FF_q) = (G(k)_{x,+:++})^\vee$. Denote by $G(k)_{x,\lambda}$ the stabiliser of $\chi_\lambda$ in $G(k)_{x}$ and define the intertwining algebra 
$$\cH_{\lambda} := \End_{G(k)_{x,\lambda}}\left( \ind_{G(k)_{x,+}}^{G(k)_{x,\lambda}} \chi_\lambda \right).$$ The subgroup $G(k)_{x,+}$ is normal in $G(k)_{x,\lambda}$, and let $A_\lambda$ denote the quotient, a finite group. If $\rho$ is a simple $\cH_\lambda$-module, let $\chi_{\lambda,\rho}$ denote the corresponding irreducible constituent of $\ind_{G(k)_{x,+}}^{G(k)_{x,\lambda}} \chi_\lambda$.

\begin{prop}\cite[Proposition 2.4]{RY14}\label{prop:RY}
    Suppose that $\lambda \in V_x^\vee(\FF_q)$ is an $\FF_q$-rational stable functional for the action of $\GG_x$. Then the following hold:
    \begin{enumerate}
        \item The representation $\pi_\lambda$ has a finite direct sum decomposition $$\pi_\lambda = \bigoplus\limits_{\rho \in \mathrm{Irr}(\cH_\lambda)} \dim \rho \cdot \pi(\lambda,\rho),$$ where $\pi(\lambda,\rho) := \ind_{G(k)_{x,\lambda}}^{G(k)} \chi_{\lambda,\rho}$ is an irreducible supercuspidal representation of $G(k)$, for each $\rho \in \mathrm{Irr}(\cH_\lambda)$.
        \item If $\rho,\rho'$ are inequivalent simple modules for $\cH_\lambda$, then $\pi(\lambda,\rho)$ and $\pi(\lambda,\rho')$ are inequivalent representations of $G(k)$.
        \item The formal degree of $\pi(\lambda,\rho)$ with respect to a Haar measure $\mu$ on $G(k)$ is given by 
        $$\mathrm{deg}_\mu(\pi(\lambda,\rho)) = \frac{\dim \chi_{\lambda,\rho}}{|A_\lambda|} \cdot \frac{1}{\mu(G(k)_{x,+})}.$$
    \end{enumerate}
\end{prop}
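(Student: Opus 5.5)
The plan is to deduce all three parts from Lemma \ref{lem:mackey}, taking $J=G(k)_{x,+}$, $H=G(k)_x$ and $\chi=\chi_\lambda$. Here $J$ is normal in $H$, $H$ contains $Z(k)$, and $H$ is compact modulo $Z(k)$ (indeed compact, since $G$ is simple). By definition $H_\chi=G(k)_{x,\lambda}$ and $\cH_\chi=\cH_\lambda$. Once we know $I(G,G(k)_{x,+},\chi_\lambda)=G(k)_{x,\lambda}$, parts (1) and (2) follow immediately. The inclusion $G(k)_{x,\lambda}\subseteq I$ is trivial, so everything reduces to the reverse inclusion.

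For the reverse inclusion, take $g\in I(G,G(k)_{x,+},\chi_\lambda)$. By Proposition \ref{prop:bruhat} with $y=x$, we may write $g=k_1nk_2$ with $k_i\in G(k)_{x,0}$ and $n\in N(k)$. Since $G(k)_{x,0}$ normalises $G(k)_{x,+}$, we have $g\in I(\chi_\lambda)$ if and only if $n\in I(\chi_\lambda^{k_1})$. Moreover, $\lambda^{k_1}$ is again stable, being in the same $\GG_x(\FF_q)$-orbit. We may therefore assume $g=n$, with $n$ lifting some $w\in W(G)$, and we let $y=w^{-1}x$.

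We split into two cases.

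\begin{enumerate}
\item If $y=x$, then $n\in G(k)_x$. We must show that intertwining at $n$ forces $n$ to fix $\chi_\lambda$. Since $n$ normalises $G(k)_{x,+}$, the relevant Hom space is nonzero exactly when $\chi_\lambda^n=\chi_\lambda$.
\item If $y\neq x$, we use the standard Reeder--Yu argument and choose a cocharacter direction. Consider the cocharacter $\mu\in X_*(T)\otimes\RR$ pointing from $x$ to $y$, and rationalise it to a one-parameter subgroup $\GG_m\to\TT_x$ defined over $\FF_q$ (it is split). The key observation is that
\[ G(k)_{x,+}\cap G(k)_{y,+}\supseteq G(k)_{x,++}\cdot\prod_{\psi\in\Delta(x),\ \langle\dot\psi,\mu\rangle\ge 0}U_\psi. \]
On $G(k)_{x,+}\cap{}^nG(k)_{x,+}$, the character $\chi_\lambda^n$ restricted to $G(k)_{x,++}$ is trivial. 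Intertwining means that $\chi_\lambda$ and $\chi_\lambda^n$ agree on the intersection. The character $\chi_\lambda^n$ is trivial on the parts $U_\psi$ with $\psi(y)>\psi(x)$, because these land in $G(k)_{y,++}$ after conjugation. Hence $\lambda$ vanishes on every $V_\psi$, $\psi\in\Delta(x)$, with $\langle\dot\psi,\mu\rangle>0$.

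The Hilbert--Mumford criterion then gives the contradiction. Stability of $\lambda$ says that for every nontrivial $\FF_q$-cocharacter of $\GG_x$, some weight of $\lambda$ is negative. Our vanishing says $\lambda$ has only nonnegative $\mu$-weights. After choosing the sign conventions correctly, this contradicts stability unless $\mu$ is trivial on the relevant quotient, that is, unless $y=x$.
\end{enumerate}

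Part (3), the formal degree, follows from $\ind_{G(k)_{x,\lambda}}^{G(k)}\chi_{\lambda,\rho}$ by the standard formula $\deg=\dim\chi_{\lambda,\rho}/\mu(G(k)_{x,\lambda})$. We then use $\mu(G(k)_{x,\lambda})=|A_\lambda|\,\mu(G(k)_{x,+})$.

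The main obstacle is the $y\neq x$ step: matching precisely which $U_\psi$ lie in $G(k)_{x,+}\cap{}^nG(k)_{x,+}$ and where they land in the filtration at $y$, so that the vanishing of $\lambda$ on a Hilbert--Mumford "half-space" is correctly obtained. We also need the translation from a real direction $x\to y$ to an honest $\FF_q$-rational cocharacter of $\GG_x$; this should be possible because $x$ is a barycentre and the relevant root pairings are rational.
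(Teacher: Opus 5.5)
Your argument is essentially the one behind \cite[Proposition 2.4]{RY14}, which the paper cites rather than reproves: you reduce via Lemma \ref{lem:mackey} to $I(G,G(k)_{x,+},\chi_\lambda)=G(k)_{x,\lambda}$, then use the affine Bruhat decomposition and a Hilbert--Mumford contradiction to force $w^{-1}x=x$, the same mechanism the paper uses in Propositions \ref{prop:epipelagic} and \ref{prop:finite_intertwine}. Two slips need fixing, neither affecting the idea: first, writing $g=k_1nk_2$ you cannot fold both $k_i$ into a single conjugate of $\chi_\lambda$ (your ``iff'' with $n\in I(\chi_\lambda^{k_1})$ already fails for $n=1$; compare Lemma \ref{lem:weylsupport}, where two conjugates $a\cdot\lambda$ and $b\cdot\lambda$ appear), but case 2 only needs one side to be trivial on $G(k)_{y,++}$ and the other to be a $\GG_x(\FF_q)$-conjugate of $\lambda$, hence still stable, while case 1 is immediate because then $g$ itself lies in $G(k)_x$ and normalises $G(k)_{x,+}$; second, your displayed containment is false in general (typically $G(k)_{x,++}\not\subseteq G(k)_{y,+}$), but you only ever use $U_\psi\subseteq G(k)_{x,+}\cap G(k)_{y,++}$ for $\psi\in\Delta(x)$ with $\pair{\dot\psi}{y-x}>0$, which is correct.
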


%We define the formal degree of a discrete series representation of $G(k)$ in Definition \ref{def:fdeg}. 

\begin{example}
	Let $x$ be the barycentre of $\cC$. Then $\GG_x = \TT_x$. As $T$ is split, we may identify $\Hom_k(T,\GG_m) \cong \Hom_{\FF_q}(\TT_x,\GG_m)$ as $\ZZ$-modules. Under this identification, the weight space decomposition for $V_x^\vee$ under the algebraic representation of $\GG_x =\TT_x$ is given by 
	$$V_x^\vee = \bigoplus\limits_{\psi \in \Pi} V_x^\vee(-\dot\psi)$$ where $V_x^\vee(-\dot\psi)$ are 1-dimensional vector spaces over $\overline \FF_q$ on which $\TT_x$ acts via the character $-\dot\psi \in \Phi$. We may identify $$V_x^\vee(-\dot\psi) \cong V_\psi^\vee \otimes_{\FF_q} \overline \FF_q$$ as representations of $\TT_x$. In this setting, $\lambda \in V_x^\vee(\FF_q)$ is stable if and only if $\lambda_{-\dot\psi} \in V_x^\vee(-\dot\psi)$ is nonzero for all $\psi \in \Pi$. This is the affine generic condition defined in \cite{GR10}, and the resulting supercuspidal representations are the simple supercuspidals. In particular, simple supercuspidals have minimal positive depth.
	% if zero for one \psi then take cocharacter to be \dot\psi^\check pairing non-positively with every other weight.
\end{example}

\begin{defn}[\cite{RY14}]
    An irreducible representation $\pi$ of $G(k)$ is epipelagic if, for some $x \in \cA$, $\pi$ has a nonzero fixed vector under $G(k)_{x,r(x)+}$ and has depth $r(x)$, where $r(x) \in \RR$ is defined by $G(k)_{x,r(x)}=G(k)_{x,+}$ and $G(k)_{x,r(x)+}=G(k)_{x,++}$.
\end{defn}

\begin{prop}\label{prop:epipelagic}
    Let $\lambda \in V_x^\vee(\FF_q)$ be an $\FF_q$-rational stable functional for the action of $\GG_x$. Then each of the irreducible supercuspidal representations $\pi(\lambda,\rho)$ constructed in Proposition \cite[Proposition 2.4]{RY14} is epipelagic.
\end{prop}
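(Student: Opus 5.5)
We need to show two things: that $\pi(\lambda,\rho)$ has a nonzero $G(k)_{x,++}$-fixed vector, and that it has depth exactly $r(x)=\delta(x)$.

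\textbf{Fixed vectors.} By Proposition \ref{prop:RY}, $\pi(\lambda,\rho)=\ind_{G(k)_{x,\lambda}}^{G(k)}\chi_{\lambda,\rho}$. The representation $\chi_{\lambda,\rho}$ is a constituent of $\ind_{G(k)_{x,+}}^{G(k)_{x,\lambda}}\chi_\lambda$, so its restriction to $G(k)_{x,+}$ is $\chi_\lambda$-isotypic (by Mackey, since $G(k)_{x,+}$ is normal in $G(k)_{x,\lambda}$ and $G(k)_{x,\lambda}$ fixes $\chi_\lambda$). Because $\lambda\in V_x^\vee(\FF_q)$, the character $\chi_\lambda$ is trivial on $G(k)_{x,++}$. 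Hence every vector of $\chi_{\lambda,\rho}$ is fixed by $G(k)_{x,++}$. Take a nonzero function in the induced space supported on $G(k)_{x,\lambda}$ with values in $\chi_{\lambda,\rho}$. Since $G(k)_{x,++}\le G(k)_{x,\lambda}$, right translation by $G(k)_{x,++}$ acts on such a function through $\chi_{\lambda,\rho}$, which is trivial there. So this function is a nonzero $G(k)_{x,++}$-fixed vector. By Moy--Prasad it follows that the depth of $\pi(\lambda,\rho)$ is at most $r(x)$.

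\textbf{Lower bound on depth.} We argue by contradiction. Suppose the depth is some $s<r(x)$, realised at a point $y$, meaning $\pi(\lambda,\rho)^{G(k)_{y,s+}}\neq 0$. Since $r(x)$ is the first positive jump at $x$, we have $s>0$: a positive-depth representation cannot have depth $0$, because $\lambda\neq0$ forces $\chi_\lambda$ to be nontrivial, so $\pi$ has no $G(k)_{x,+}$-fixed vectors.

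We then use the Moy--Prasad theory of unrefined minimal K-types: any two minimal K-types of $\pi$ are associates. Here $(G(k)_{x,r(x)},\chi_\lambda)$ occurs in $\pi$. It has depth $r(x)$ only if it is a minimal K-type, and minimality means that the coset $\lambda+\mathfrak g_{x,(-r(x))+}$ contains no nilpotent elements.

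So the key step is to show that stability of $\lambda$ implies this non-nilpotence. We would argue via the Hilbert--Mumford criterion. If the coset $\lambda+\mathfrak g^*_{x,(-r)+}$ contained a nilpotent element, then $\lambda$, viewed in $V_x^\vee$, would be unstable: there would be a cocharacter $\GG_m\to\GG_x$ contracting $\lambda$ to $0$. This is the standard statement that the nilpotent cone in $V_x^\vee$ equals the unstable locus, \cite{RY14}, \cite{Mum77}, and it holds in all residue characteristics as in \cite{FR17}. Unstable implies not stable, which contradicts the hypothesis.

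Given non-nilpotence, $(G(k)_{x,r(x)},\chi_\lambda)$ is a minimal K-type. Moy--Prasad then gives depth equal to $r(x)$.

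\textbf{Main obstacle.} The main obstacle is this last step, which involves two issues:
\begin{itemize}
\item Making the passage from stability to non-nilpotence of the coset precise without assumptions on $p$. The Moy--Prasad identification $G(k)_{x,r:r+}\cong\mathfrak g_{x,r:r+}$ holds for all $p$ at a single step, which is all we need.
\item Ensuring that the depth-characterisation theorem of Moy--Prasad applies in small $p$. Here we use that depth is well defined and attained by an unrefined minimal K-type for all $p$.
\end{itemize}
Alternatively, one could try to avoid K-types entirely and cite \cite[Section 2.6]{RY14}, where this is asserted directly.
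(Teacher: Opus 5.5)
Your fixed-vector argument is the same as the paper's. For the lower bound on depth you take a genuinely different route, through Moy--Prasad unrefined minimal K-types, and there the decisive step is asserted rather than proved.

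\textbf{The gap.} What you actually need is the following lifting statement: if the coset $\lambda+\mathfrak g^*_{x,(-r(x))+}$ contains a nilpotent element of $\mathfrak g^*(k)$, then the image of $\lambda$ in $V_x^\vee$ lies in the nullcone for $\GG_x$. The statement you quote, that the nilpotent cone in $V_x^\vee$ equals the unstable locus, is a different one. It is a Vinberg--Levy type statement about the graded piece itself, and \cite{RY14} obtain it by transferring to a graded Lie algebra under a largeness assumption on $p$. \cite{FR17} removes that assumption only for the classification of points admitting stable vectors, not for this statement.

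To prove the lifting statement you would need three steps. First, produce a destabilising cocharacter of the nilpotent element that is rational over $k^{\ur}$. Second, move it by its parabolic into an apartment through $x$. Third, read off positive weights on $\bar\lambda$. In equal characteristic $k^{\ur}$ is imperfect, so Kempf's rationality theorem is not available off the shelf. Since $p=2,3$ is the whole point of the paper, this step cannot be waved through.

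Your justification of $s>0$ is also wrong. Having no $G(k)_{x,+}$-fixed vectors says nothing about $G(k)_{y,0+}$-fixed vectors for other $y$. That step is superfluous once a minimal K-type is in hand, however.

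\textbf{The paper's route.} The paper never touches $\mathfrak g^*$, nilpotent elements, or the associativity of minimal K-types. It argues directly.
\begin{enumerate}
\item Suppose $\pi_\lambda$, and hence any summand $\pi(\lambda,\rho)$, has a nonzero $G(k)_{y,r+}$-fixed vector.
\item Mackey theory and the decomposition $G(k)=G(k)_{y,0}N(k)G(k)_{x,0}$ reduce this to $\chi_\lambda$ being trivial on $G(k)_{x,+}\cap G(k)_{y,r+}$. Here $y$ is replaced by a point of $\cA$ and $\lambda$ by a $\GG_x(\FF_q)$-translate.
\item Hence $\psi(y)\le r$ for every $\psi\in\supp(\lambda)$.
\item The Hilbert--Mumford criterion, applied only to cocharacters of $\TT_x$, combined with the positive-dependence lemma \cite{Dav54}, gives $a_\psi\ge 0$, not all zero, with $\sum a_\psi\psi$ constant.
\item Evaluating this constant at $x$ and at $y$ gives $r(x)\sum a_\psi\le r\sum a_\psi$.
\end{enumerate}

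\textbf{Comparison.} The paper's argument is elementary, valid for every $p$, and uses only $\FF_q$-stability. That is why it is reused verbatim later, for characters of $V(x)$ that do not factor through $G(k)_{x,+:++}$: see Proposition \ref{prop:Bn} and the $G_2$, $p=3$ example, where no Lie-algebra or minimal K-type interpretation exists. Your route, once the lifting statement is supplied, would additionally show that $(G(k)_{x,r(x)},\chi_\lambda)$ is an unrefined minimal K-type, but at the cost of much heavier input.
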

\begin{proof}
    By \cite[Theorem 4.2]{FR17}, given that such a stable functional exists, $x$ must be the barycentre of a facet. Conjugating as necessary, we assume $x \in \overline \cC$. Each $\chi_{\lambda,\rho}$ is $\chi_\lambda$-isotypic when restricted to $G(k)_{x,+}$. As $\chi_\lambda$ is trivial on $G(k)_{x,++}$ by definition, each $\pi(\lambda,\rho) = \ind_{G(k)_{x,\lambda}}^{G(k)} \chi_{\lambda,\rho}$ has a nonzero fixed vector under the action of $G(k)_{x,++}$, namely any function in $\ind_{G(k)_{x,\lambda}}^{G(k)} \chi_{\lambda,\rho}$ supported on $G(k)_{x,\lambda}$. It remains to show that $\pi(\lambda,\rho)$ has depth $r(x)$. 

    Suppose that $\pi_\lambda$ has a nonzero fixed vector $v$ under $G(k)_{y,r+}$ for some $y \in \cB$ and $r \geq 0$. As $g \cdot v$ is a nonzero fixed vector under $gG(k)_{y,r+}g^{-1} = G(k)_{g\cdot y,r+}$, we may assume after conjugating $y$ that $y \in \cA$. It suffices to show that $r\geq r(x)$. By Mackey theory we have 
    \begin{align*}
        0 &\neq \Hom_{G(k)_{y,r+}}(\mathbbm{1}_{G(k)_{y,r+}}, \pi_\lambda\mid_{G(k)_{y,r+}}) \\
        &\cong \bigoplus\limits_{g \in G(k)_{y,r+} \backslash G(k)/G(k)_{x,+}} \Hom_{G(k)_{y,r+}}(\mathbbm{1}_{G(k)_{y,r+}}, \ind_{\prescript{g}{}{G(k)_{x,+}} \cap G(k)_{y,r+}}^{G(k)_{y,r+}}  \prescript{g}{}{\chi_\lambda} \mid_{\prescript{g}{}{G(k)_{x,+}} \cap G(k)_{y,r+}}) \\
        &\cong \bigoplus\limits_{g \in G(k)_{y,r+} \backslash G(k)/G(k)_{x,+}} \Hom_{\prescript{g}{}{G(k)_{x,+}} \cap G(k)_{y,r+}}(\mathbbm{1}_{\prescript{g}{}{G(k)_{x,+}} \cap G(k)_{y,r+}}, \prescript{g}{}{\chi_\lambda} \mid_{\prescript{g}{}{G(k)_{x,+}} \cap G(k)_{y,r+}})
    \end{align*}
    where the last line follows from Frobenius reciprocity. The above sum being nonzero is equivalent to there existing $g$ such that $\prescript{g}{}{\chi_\lambda}$ is trivial on $\prescript{g}{}{G(k)_{x,+}} \cap G(k)_{y,r+}$. By the affine Bruhat decomposition, Proposition \ref{prop:bruhat}, we may write $g=anb$ for $a \in G(k)_{y,0}$, $b \in G(k)_{x,0}$ and $n \in N(k)$. Then $\prescript{b}{}{\chi_\lambda} = \chi_{b\cdot \lambda}$ is trivial on $$G(k)_{x,+} \cap G(k)_{y,r+}^{an} = G(k)_{x,+} \cap G(k)_{n\cdot y,r+}.$$
    Replacing $y$ by $n^{-1}\cdot y \in \cA$, and $\lambda$ by $b^{-1} \cdot \lambda$, which is still stable, this means that $\chi_\lambda$ is trivial on $G(k)_{x,+} \cap G(k)_{y,r+}$ for some $y \in \cA$. Recall that $\supp(\lambda)$ denotes the set of $\psi \in \Delta(x)$ such that $\lambda$ is not trivial on $U_\psi$. Then, in order for $\chi_\lambda$ to be trivial on $G(k)_{x,+} \cap G(k)_{y,r+}$, we must have that for all $\psi \in \supp(\lambda)$, we have $\psi(y) \leq r$. Otherwise, $U_\psi \subset G(k)_{x,+} \cap G(k)_{y,r+}$ and $\chi_\lambda$ is not trivial here. The assumption that $\lambda$ is stable implies, by the Hilbert--Mumford criterion, that for all nonzero $\gamma \in X_*(T) \otimes \RR = \cA$, there exists $\psi \in \supp(\lambda)$ such that $\langle \dot\psi, \gamma \rangle >0$.

    We have the following elementary lemma:

    \begin{lemma}\label{lem:pos_span}\cite{Dav54}
       Let $V$ be a finite-dimensional vector space over $\RR$ and suppose $f_1, \dots, f_n$ are linear functionals on $V$ with the property that for all nonzero $v \in V$, there exists $i$ such that $f_i(v)>0$. Then there exist real numbers $a_i \geq 0$, not all zero, such that $\sum_i a_i f_i = 0$.
    \end{lemma}
    \begin{proof}
        \cite[Theorem 3.1]{Dav54}.
    \end{proof}

    As a result, there exist $a_\psi \geq 0$, not all zero, such that 
    $$\sum\limits_{\psi \in \supp(\lambda)} a_\psi \psi$$ is a constant function. But then 
    $$r(x) \cdot \sum\limits_{\psi \in \supp(\lambda)} a_\psi = \sum\limits_{\psi \in \supp(\lambda)} a_\psi \psi(x) = \sum\limits_{\psi \in \supp(\lambda)} a_\psi \psi(y) \leq r \cdot \sum\limits_{\psi \in \supp(\lambda)} a_\psi$$
    so that $r \geq r(x)$.
\end{proof}

\subsection{Towards higher depth}

We would like to extend the construction of \cite{RY14} from stable functionals of $G(k)_{x,+:++}$ to certain functionals on $V(x)$. Unlike the case of $G(k)_{x,+:++}$, the space $V(x,k^\ur) = G(k^\ur)_{x,+:1}^\ab$ does not admit an algebraic representation of $\GG_x$, or even $\TT_x$. Indeed, there is different behaviour in $V(x)$ depending on whether $q=2,3$ or $q>3$, so that in general $$V(x,k^\ur) \not\cong V(x) \otimes_{\FF_q} \overline \FF_q.$$ 

By the Hilbert--Mumford criterion, a functional $\lambda \in V_x^\vee$ is stable if and only if, for any one-parameter subgroup $\gamma: \GG_m \to \GG_x$, $\lambda$ has a negative weight for the action of $\gamma$ on $V_x^\vee$ (induced by the action of $\GG_x$). In the construction of \cite{RY14}, it turns out that we only need this condition to hold for all $\gamma$ defined over $\FF_q$. In other words, we only require that $g\cdot \lambda$ has a negative weight for the action of $\gamma \in \Hom_{\FF_q}(\GG_m, \TT_x)$ for any $g \in \GG_x(\FF_q)$ and any $\gamma$. As $T$ is split over $k$, we may identify $$\Hom_{\FF_q}(\GG_m, \TT_x) = \Hom_k(\GG_m,T) = X_*(T).$$
If $\lambda$ is stable, then for any $\gamma \in X_*(T)$ and $g \in \GG_x(\FF_q)$, there exists $\psi \in \supp(g\cdot \lambda)$ such that $\pair{\dot\psi}{\gamma} >0$.

\begin{defn}\label{def:hm}
    Let $\lambda \in V(x)^\vee$. We say that $\lambda$ is $\FF_q$-stable if, for any $\gamma \in X_*(T)$ and $g \in \GG_x(\FF_q)$, there exists $\psi \in \supp(g\cdot \lambda)$ such that $\pair{\dot\psi}{\gamma} >0$.
\end{defn}

\begin{rem}
    At least when $q=2$, this is a strictly weaker condition than stability as defined in the previous subsection. This means that one could use the same method as Reeder--Yu to produce supercuspidal representations coming from vectors in $V_x^\vee$ that may be $\FF_q$-stable but not stable. For an example of this, see Section 4.2. The main observation in this example is that the adjoint representation of $\SL_2$ over $\overline \FF_2$ has no stable vectors (this is true of any adjoint representation) but, identifying the adjoint representation with the representation on degree 2 homogeneous polynomials in $X,Y$ over $\overline \FF_2$, the action of $\SL_2(\FF_2)$ on $X^2+XY+Y^2$ is trivial. In Section 4.2, for $G$ of type $B_n$, we consider a point $y$ in the apartment with $\GG_y(\FF_2)\cong \SL_2(\FF_2)$, from which we produce an $\FF_2$-stable vector that is not stable.
\end{rem}

\begin{lemma}\label{lem:lin_alg}
    Let $V$ be a finite-dimensional vector space over $\RR$ and suppose $f_1, \dots, f_n$ are linear functionals on $V$ with the property that for all nonzero $v \in V$, there exists $i$ such that $f_i(v)>0$. Then for any $r>0$
    $$\{v \in V: f_i(v) \leq r \text{ for all } 1 \leq i \leq n\}$$ is compact.
\end{lemma}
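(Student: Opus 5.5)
The set $K_r := \{v \in V : f_i(v) \leq r \text{ for all } i\}$ is an intersection of finitely many closed half-spaces. So it is closed and convex, and it contains $0$ because $r>0$. Since $V$ is finite-dimensional, Heine--Borel reduces compactness to boundedness. The plan is to prove boundedness by a scaling argument on the unit sphere.

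Fix any norm $\|\cdot\|$ on $V$ and let $S$ be its unit sphere, which is compact. Define
$$h(v) := \max_{1 \leq i \leq n} f_i(v).$$
Each $f_i$ is continuous, so $h$ is continuous. By hypothesis $h(v) > 0$ for every $v \in S$. Hence $h$ attains a strictly positive minimum $c>0$ on $S$. The function $h$ is also positively homogeneous: for $t \geq 0$ we have $h(tv) = t\,h(v)$, since each $f_i$ is linear.

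Now let $v \in K_r$ be nonzero and write $v = \|v\| u$ with $u \in S$. Then
$$r \geq h(v) = \|v\|\, h(u) \geq c\|v\|,$$
so $\|v\| \leq r/c$. Therefore $K_r$ is contained in the closed ball of radius $r/c$. Being closed and bounded, $K_r$ is compact.

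There is no real obstacle here. The one point that needs care is that we use a \emph{positive lower bound} for $h$ on the sphere, and this comes from compactness of $S$ together with pointwise positivity of $h$. Pointwise positivity alone, without compactness, would not give a uniform bound.

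This is the form in which the lemma will be used later. The $f_i$ will be the gradients $\dot\psi$ for $\psi$ in a support, the bound will control the points $y \in \cA$ at which $\chi_\lambda$ can be trivial on $G(k)_{x,+} \cap G(k)_{y,r+}$, and the conclusion is that only finitely many Weyl-group translates need to be considered.
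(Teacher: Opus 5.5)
Your proof is correct and is essentially the paper's argument. The paper chooses $R(v)>0$ with $\max_i f_i(R(v)v)=r$, which is $R(v)=r/h(v)$ in your notation, and uses compactness of the unit sphere to place the set inside the ball of radius $R_0=\max_S R=r/c$; your version is slightly more explicit, since the positive minimum $c$ and the positive homogeneity of $h$ make the continuity of $R$ and the resulting bound transparent.
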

\begin{proof}
    Let $S$ denote the unit sphere in $V$ and $B$ the unit disc so that the boundary of $B$ is $S$. For each $v \in S$, there exists $i$ such that $f_i(v)>0$. Therefore there exists $R(v) >0$ such that $$\max_i f_i(R(v)v) = r$$ and $R(v)$ is continuous in $v$. As $S$ is compact, the function $R$ takes a maximum value $R_0$ on $S$. Then we have the containment 
    $$\{v \in V: f_i(v) \leq r \text{ for all } 1 \leq i \leq n\} \subset R_0\cdot B$$ so that $\{v \in V: f_i(v) \leq r \text{ for all } 1 \leq i \leq n\}$ is closed and bounded and therefore compact.
\end{proof}

\begin{prop}\label{prop:finite_intertwine}
    Let $\lambda \in V(x)^\vee$ be an $\FF_q$-stable functional. Fix a positive real number $r$. There exist finitely many $g \in G(k)_{x,r} \backslash G(k) / G(k)_{x,+}$ such that $\lambda$ vanishes on $G(k)_{x,+} \cap g^{-1}G(k)_{x,r}g$.
\end{prop}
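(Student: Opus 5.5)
We reduce the double coset space to a finite combinatorial problem using Proposition \ref{prop:bruhat}, and then argue as in the proof of Proposition \ref{prop:epipelagic}, with Lemma \ref{lem:lin_alg} replacing Lemma \ref{lem:pos_span}.

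\textbf{Step 1: normal form for $g$.} By Proposition \ref{prop:bruhat} with $y=x$, write $g=anb$ with $a,b\in G(k)_{x,0}$ and $n$ a lift of an element of $W(G)$. The quotients $G(k)_{x,0}/G(k)_{x,r}$ and $G(k)_{x,0}/G(k)_{x,+}$ are finite, since both subgroups are open in the compact group $G(k)_{x,0}$. Hence every double coset $G(k)_{x,r}\,g\,G(k)_{x,+}$ is of the form $G(k)_{x,r}\,a n b\,G(k)_{x,+}$, where $a$ and $b$ range over finite sets of representatives and $n$ ranges over lifts of $W(G)$. It therefore suffices to show that, for fixed $a$ and $b$, only finitely many $w\in W(G)$ admit a lift $n$ for which $\lambda$ vanishes on $G(k)_{x,+}\cap g^{-1}G(k)_{x,r}g$.

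\textbf{Step 2: conjugate into the apartment.} Conjugating by $b$ gives
\[
b\bigl(G(k)_{x,+}\cap g^{-1}G(k)_{x,r}g\bigr)b^{-1}=G(k)_{x,+}\cap n^{-1}G(k)_{x,r}n=G(k)_{x,+}\cap G(k)_{n^{-1}x,r},
\]
using that $a$ normalises $G(k)_{x,r}$ and $b$ normalises $G(k)_{x,+}$. So vanishing of $\lambda$ on the original group is equivalent to vanishing of $b\cdot\lambda$ on $G(k)_{x,+}\cap G(k)_{y,r}$ with $y=n^{-1}x\in\cA$. This conjugation action factors through $G(k)_{x,0:0+}=\GG_x(\FF_q)$, because $G(k)_{x,0+}$ acts trivially on the abelianisation of $G(k)_{x,+:1}$: commutators of $G(k)_{x,0+}$ with $G(k)_{x,+}$ lie in the derived subgroup. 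Writing $\mu=\bar b\cdot\lambda$, the functional $\mu$ is again $\FF_q$-stable by Definition \ref{def:hm}.

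\textbf{Step 3: bound $y$.} For each $\psi\in\supp(\mu)$ (for a tuple, any constituent $\psi_i$), the root group $U_\psi$ is contained in $G(k)_{x,+}$. If $\psi(y)\ge r$, then $U_\psi\le G(k)_{y,r}$ and $\mu$ would be nontrivial on the intersection. Hence $\psi(y)<r$ for all $\psi\in\supp(\mu)$. Write $y=x+v$ with $v\in X_*(T)\otimes\RR$. The condition becomes $\pair{\dot\psi}{v}<r-\psi(x)\le r$.

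\textbf{Step 4: the main obstacle, a compactness argument.} To apply Lemma \ref{lem:lin_alg}, we need that for every nonzero real $v$ some $\psi\in\supp(\mu)$ has $\pair{\dot\psi}{v}>0$. $\FF_q$-stability only gives this for integral $\gamma\in X_*(T)$. Suppose the real statement fails. Then the closed polyhedral cone $\{v:\pair{\dot\psi}{v}\le 0\text{ for all }\psi\in\supp(\mu)\}$ is nonzero. It is cut out by rational linear inequalities, so its extreme rays are rational. A nonzero rational point of the cone can be scaled to an integral $\gamma\ne0$ with $\pair{\dot\psi}{\gamma}\le0$ for all $\psi\in\supp(\mu)$, contradicting Definition \ref{def:hm} with $g=1$ applied to $\mu$.

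Lemma \ref{lem:lin_alg} then shows that $v$ lies in a compact set. Hence $y=w^{-1}x$ lies in a bounded subset of $\cA$. Since $W(G)$ acts properly on $\cA$ (the stabiliser $W(x)$ is finite and orbits are discrete), only finitely many $w$ are possible. The finitely many choices of $a$ and $b$ each contribute finitely many $w$. Moreover, different lifts $n$ of the same $w$ differ by elements of $T_{\cO_k}(\cO_k)\subset G(k)_{x,0}$, so they contribute only finitely many double cosets. This proves the finiteness claimed in the statement.
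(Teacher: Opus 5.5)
Your proof is correct and follows the same route as the paper: the Bruhat decomposition, reduction to finitely many $b \in \GG_x(\FF_q)$ using that $b\cdot\lambda$ is again $\FF_q$-stable, bounding $y-x$ via Lemma \ref{lem:lin_alg}, and then finiteness from the discreteness of $W(G)$-orbits and the finiteness of stabilisers. Your Step 4 passes from integral to real cocharacters via rationality of the cone $\{v : \pair{\dot\psi}{v}\le 0 \text{ for all } \psi\in\supp(\mu)\}$, which makes explicit a hypothesis of Lemma \ref{lem:lin_alg} that the paper uses without comment; if that cone is not pointed it has no extreme rays, so take a nonzero point of its rational lineality space instead, and the conclusion is unchanged.
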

\begin{proof}
    By Proposition \ref{prop:bruhat} a set of $G(k)_{x,0}$ double coset representatives of $G(k)$ is contained in a set of lifts of elements of $W(G)$ to $N(k)$. Write $g=anb$ where $a,b \in G(k)_{x,0}$ and $n \in N(k)$ has image $w\in W(G)$. Then 
    $$G(k)_{x,+} \cap g^{-1}G(k)_{x,r}g = G(k)_{x,+} \cap b^{-1}n^{-1}G(k)_{x,r}nb = G(k)_{x,+} \cap b^{-1}G(k)_{y,r}b$$
    where $y=w^{-1}(x) \in \cA(T)$ and the first equality follows from the fact that $G(k)_{x,r}$ is normal in $G(k)_{x,0}$. So we need to show that there are finitely many pairs of $w\in W(G)$ and $b \in G(k)_{x,0}/G(k)_{x,+} \cong \GG_x(\FF_q)$ such that $\lambda$ vanishes on $G(k)_{x,+} \cap b^{-1} G(k)_{y,r} b$, or equivalently $b \cdot \lambda$ vanishes on $G(k)_{x,+} \cap G(k)_{y,r}$. By definition of stability, $b \cdot \lambda$ is still $\FF_q$-stable. As there are finitely many choices for $b$, we may fix $b$ and show there are only finitely many $w$ with the above property.

    Set $\gamma = y-x \in X_*(T) \otimes \RR$. We have for any $\psi \in \Psi$
    $$\psi(y) = \psi(x+\gamma) = \psi(x)+\pair{\dot\psi}{\gamma}.$$
    Observe that $G(k)_{x,+}\cap G(k)_{y,r}$ contains $U_\psi$ for all $\psi \in \Psi$ such that $\psi(x)>0$ and $\psi(y) \geq r$. If $\psi \in \supp(b\cdot \lambda)$ then we automatically have $\psi(x)>0$. So, for such $\psi$, if $\pair{\dot\psi}{\gamma} \geq r$, we certainly have $\psi(y) \geq r$. It suffices to show that there exist finitely many $w \in W(G)$ such that for $\gamma = w^{-1}(x)-x$ we have $\pair{\dot\psi}{\gamma} \leq r$ for all $\psi \in \supp(b\cdot \lambda)$. This follows from Lemma \ref{lem:lin_alg} and the fact that under the action of $W(G)$ on $X_*(T) \otimes \RR$, the stabiliser of any point is finite and the orbit of any point is discrete.
\end{proof}

\begin{cor}
    Let $\lambda \in V(x)^\vee$ be an $\FF_q$-stable functional. Then $I(G,G(k)_{x,+},\chi_\lambda)$ is a union of finitely many $G(k)_{x,+}$-double cosets.
\end{cor}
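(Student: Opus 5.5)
The plan is to reduce the corollary to Proposition \ref{prop:finite_intertwine} with a suitable choice of $r$. The key observation is that $G(k)_{x,+}$ contains some deeper filtration subgroup $G(k)_{x,r}$ on which $\chi_\lambda$ is trivial. Concretely, take $r = 1$: since $\chi_\lambda$ is inflated from $V(x)$, the abelianisation of $G(k)_{x,+:1}$, it is trivial on $G(k)_{x,1}$. Now let $g \in I(G,G(k)_{x,+},\chi_\lambda)$. Since $\chi_\lambda$ is a character, $g$ intertwines exactly when $\chi_\lambda$ and $\chi_\lambda^g$ agree on $G(k)_{x,+} \cap G(k)_{x,+}^g$. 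Here $G(k)_{x,+}^g = g^{-1}G(k)_{x,+}g$.

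Next I restrict this agreement to the smaller subgroup $G(k)_{x,+} \cap g^{-1}G(k)_{x,1}g$. On it, $\chi_\lambda^g$ is trivial, because $\chi_\lambda^g(g^{-1}hg) = \chi_\lambda(h)$ and $\chi_\lambda$ is trivial on $h \in G(k)_{x,1}$. Hence $\chi_\lambda$ must also be trivial there. In other words, $\lambda$ vanishes on the image of $G(k)_{x,+} \cap g^{-1}G(k)_{x,1}g$ in $V(x)$, which is precisely the hypothesis of Proposition \ref{prop:finite_intertwine} with $r=1$.

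That proposition then gives finitely many double cosets $G(k)_{x,1}\, g\, G(k)_{x,+}$ containing intertwining elements. Each such double coset is contained in a single $G(k)_{x,+}$-double coset, since $G(k)_{x,1} \subseteq G(k)_{x,+}$. Also, intertwining is a union of $G(k)_{x,+}$-double cosets. Therefore $I(G,G(k)_{x,+},\chi_\lambda)$ is contained in, and hence equal to, a finite union of $G(k)_{x,+}$-double cosets.

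The only point needing care is the meaning of ``$\lambda$ vanishes on $G(k)_{x,+} \cap g^{-1}G(k)_{x,r}g$'' in Proposition \ref{prop:finite_intertwine}. It must be read as triviality of $\chi_\lambda$, or equivalently vanishing of $\lambda$ composed with the abelianisation map, on that subgroup. The proof of Proposition \ref{prop:finite_intertwine} uses only the contrapositive: if some $U_\psi$ with $\psi \in \supp(b\cdot\lambda)$ lies in the intersection, then $\chi_\lambda$ is nontrivial there. This is consistent with the reading above, so the reduction is valid. There is also no issue with $r = 1$ versus a general positive $r$, since the proposition holds for every fixed $r > 0$.
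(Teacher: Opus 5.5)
Your argument is correct. It uses the same reduction as the paper, namely to Proposition \ref{prop:finite_intertwine}, but with a different choice of $r$. The paper takes $r$ with $G(k)_{x,r}=G(k)_{x,+}$. You take $r=1$, a level on which $\chi_\lambda$ is trivial, and then coarsen the finitely many classes in $G(k)_{x,1}\backslash G(k)/G(k)_{x,+}$ to $G(k)_{x,+}$-double cosets.

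Your choice is in fact the one that makes the reduction valid. Intertwining only says that $\chi_\lambda$ and $\chi_\lambda^g$ agree on $G(k)_{x,+}\cap g^{-1}G(k)_{x,+}g$; it does not say that $\chi_\lambda$ is trivial there. With $r=r(x)$, the set of $g$ controlled by the proposition does not even contain $g=1$, because $\chi_\lambda$ is nontrivial on $G(k)_{x,+}$ ($\lambda\neq 0$ by $\FF_q$-stability).

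Your step of restricting to $G(k)_{x,+}\cap g^{-1}G(k)_{x,1}g$, on which $\chi_\lambda^g$ is trivial, is exactly what turns ``agree'' into ``trivial''. So your version supplies the justification that the paper's one-line proof leaves out. The only extra cost is the harmless observation $G(k)_{x,1}\subseteq G(k)_{x,+}$.
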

\begin{proof}
    This follows from the above proposition when taking $r$ such that $G(k)_{x,r}=G(k)_{x,+}$.
\end{proof}

\begin{cor}\label{cor:admissible}
    Let $\lambda \in V(x)^\vee$ be an $\FF_q$-stable functional. The compactly induced representation $(\pi_\lambda,\Sigma) = \ind_{G(k)_{x,+}}^{G(k)} \chi_\lambda$ is admissible.
\end{cor}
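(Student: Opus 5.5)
To prove admissibility, we must show that for every compact open subgroup $K$ of $G(k)$ the space $\Sigma^K$ of $K$-fixed vectors is finite-dimensional. Since the Moy--Prasad subgroups $G(k)_{x,r}$, $r>0$, form a neighbourhood basis of the identity and $\Sigma^{K'} \subseteq \Sigma^{K}$ whenever $K \subseteq K'$, it suffices to treat $K = G(k)_{x,r}$ for each fixed $r>0$. We may also enlarge $r$ so that $G(k)_{x,r} \subseteq G(k)_{x,+}$.

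\textbf{Step 1: reduce to double cosets via Mackey theory.} This is the same computation as in the proof of Proposition \ref{prop:epipelagic}. Restricting $\pi_\lambda$ to $G(k)_{x,r}$ gives
$$\Sigma^{G(k)_{x,r}} \cong \bigoplus_{g \in G(k)_{x,r}\backslash G(k)/G(k)_{x,+}} \Hom_{G(k)_{x,r}}\bigl(\mathbbm{1}, \ind_{{}^gG(k)_{x,+}\cap G(k)_{x,r}}^{G(k)_{x,r}} {}^g\chi_\lambda\bigr).$$
By Frobenius reciprocity, the summand indexed by $g$ is isomorphic to $\Hom_{{}^gG(k)_{x,+}\cap G(k)_{x,r}}(\mathbbm{1},{}^g\chi_\lambda)$. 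This space has dimension at most $1$, and it is nonzero exactly when $\chi_\lambda$ is trivial on $G(k)_{x,+}\cap g^{-1}G(k)_{x,r}g$.

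An equivalent, more concrete description avoids the Mackey decomposition. A $G(k)_{x,r}$-fixed function $f$ in $\Sigma$ is determined by its values on the double cosets $G(k)_{x,r}\,g\,G(k)_{x,+}$. Its value at $g$ can be nonzero only if, for all $h \in G(k)_{x,+}\cap g^{-1}G(k)_{x,r}g$, we have $f(g)=f(ghg^{-1}g)=f(gh)=\chi_\lambda(h)f(g)$. Here $\Sigma$ consists of functions with $f(gh)=\chi_\lambda(h)f(g)$ for $h\in G(k)_{x,+}$.

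\textbf{Step 2: finiteness of contributing double cosets.} For $g$ to contribute, $\chi_\lambda$ must be trivial on $G(k)_{x,+}\cap g^{-1}G(k)_{x,r}g$. Since $\chi$ is nontrivial, triviality of $\chi\circ\lambda$ on a subgroup whose image in $V(x)$ is a subgroup is the same as $\lambda$ vanishing there. More carefully, the image is an $\FF_q$-subspace spanned by images of the root subgroups $U_\psi$ contained in it, as used in the proof of Proposition \ref{prop:finite_intertwine}. That proof in fact only needs the implication that the relevant $U_\psi$ with $\psi\in\supp(b\cdot\lambda)$ lie in the intersection, so triviality of $\chi_\lambda$ suffices.

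Hence, by Proposition \ref{prop:finite_intertwine}, only finitely many double cosets contribute, each with dimension at most one, and so $\dim \Sigma^{G(k)_{x,r}}<\infty$.

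\textbf{Main obstacle.} The one point needing care is the passage between ``$\chi_\lambda$ trivial'' and ``$\lambda$ vanishes'' in Proposition \ref{prop:finite_intertwine}. I would handle it by rereading that proof with the hypothesis phrased as triviality of $\chi_\lambda$. That proof only uses that some $\psi \in \supp(b\cdot\lambda)$ with $\psi(y)\ge r$ would give $U_\psi$ inside the intersection on which $\chi_{b\cdot\lambda}$ is nontrivial, by the definition of support. So the argument applies verbatim and the proof is complete.
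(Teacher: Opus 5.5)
Your proof is correct and follows the paper's argument. You reduce to $K=G(k)_{x,r}$, use Mackey theory and Frobenius reciprocity to write $\Sigma^{G(k)_{x,r}}$ as a direct sum over double cosets of spaces of dimension at most one, and invoke Proposition \ref{prop:finite_intertwine} to see that only finitely many are nonzero. Your remark that the proof of that proposition only needs $\chi_{b\cdot\lambda}$ to be trivial on the relevant $U_\psi$, so that it applies with ``$\chi_\lambda$ trivial'' in place of ``$\lambda$ vanishes'', soundly handles a point the paper passes over quickly.
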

\begin{proof}
    We must show that for any compact open subgroup $K$ of $G(k)$, the vector space of fixed points $\Sigma^K$ of $K$ is finite dimensional. We may shrink $K$ to assume that $K=G(k)_{x,r}$ for some positive real number $r$, as the Moy--Prasad filtration at any $x$ is a neighbourhood basis of the identity. Then we must show that $\Hom_{G(k)_{x,r}}(\mathbbm{1}_{G(k)_{x,r}}, \pi_\lambda\mid_{G(k)_{x,r}})$ is finite dimensional. By Frobenius reciprocity and Mackey theory we have 
    $$\Hom_{G(k)_{x,r}}(\mathbbm{1}_{G(k)_{x,r}}, \pi_\lambda\mid_{G(k)_{x,r}}) \cong \bigoplus\limits_{g \in G(k)_{x,r} \backslash G(k)/G(k)_{x,+}} \Hom_{G(k)_{x,+} \cap G(k)_{x,r}^g}(\mathbbm{1}_{G(k)_{x,+} \cap G(k)_{x,r}^g}, \chi_\lambda\mid_{G(k)_{x,+} \cap G(k)_{x,r}^g}).$$
    As $\chi_\lambda$ is a character, and $\chi$ is nontrivial on $\FF_q$, $\Hom_{G(k)_{x,+} \cap G(k)_{x,r}^g}(\mathbbm{1}_{G(k)_{x,+} \cap G(k)_{x,r}^g}, \chi_\lambda\mid_{G(k)_{x,+} \cap G(k)_{x,r}^g})$ is nonzero (and 1-dimensional) if and only if $\lambda$ vanishes on $G(k)_{x,+} \cap G(k)_{x,r}^g$. The result follows from Proposition \ref{prop:finite_intertwine}.
\end{proof}

\begin{thm}\label{thm:main}
    Let $\lambda \in V(x)^\vee$ be an $\FF_q$-stable functional. The compactly induced representation $(\pi_\lambda,\Sigma) = \ind_{G(k)_{x,+}}^{G(k)} \chi_\lambda$ is a finite direct sum of irreducible supercuspidal representations of $G(k)$.
\end{thm}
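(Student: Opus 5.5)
The plan is to reduce to a statement about compactly supported matrix coefficients and the structure of admissible finitely generated representations, using only that the intertwining of $\chi_\lambda$ is finite. Set $J = G(k)_{x,+}$. By the Corollary following Proposition \ref{prop:finite_intertwine}, $I(G,J,\chi_\lambda)$ is a finite union of $J$-double cosets. Lemma \ref{lem:mackey} does not apply directly, since $I(G,J,\chi_\lambda)$ need not be contained in a group normalising $J$. I will therefore argue through the Hecke algebra instead.

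The first step is to show that $\cH = \End_{G(k)}(\pi_\lambda)$ is finite dimensional. By Frobenius reciprocity and Mackey theory, $\cH$ is identified with the space of functions $f: G(k) \to \CC$ satisfying $f(j_1 g j_2) = \chi_\lambda(j_1) f(g) \chi_\lambda(j_2)$ that are supported on finitely many double cosets. Such an $f$ can be nonzero on $JgJ$ only if $g \in I(G,J,\chi_\lambda)$, and each double coset carries at most a one-dimensional space of such functions. So $\dim \cH \leq |J\backslash I(G,J,\chi_\lambda)/J| < \infty$.

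The second step is to establish supercuspidality. By Corollary \ref{cor:admissible}, $\pi_\lambda$ is admissible. Its matrix coefficients are compactly supported modulo the compact centre, since $\pi_\lambda$ is compactly induced from a compact open subgroup. For a smooth vector $v$ and a smooth vector $\tilde v$ in the contragredient (which is compactly induced from $\chi_\lambda^{-1}$ and admissible by the same argument), the coefficient $g \mapsto \langle \pi_\lambda(g)v,\tilde v\rangle$ is supported in finitely many translates of the intertwining set. The cleanest route is the standard fact that an admissible representation that is compactly induced from a compact open subgroup has all matrix coefficients compactly supported modulo centre. It is therefore a finite length (via the finitely generated plus admissible argument below) direct sum of supercuspidals.

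The third step is semisimplicity and finiteness. Here I will use the standard results that a representation all of whose matrix coefficients are compactly supported modulo centre, with $Z(k)$ compact, is semisimple (Harish-Chandra/Bernstein: such representations are projective and injective in the category of smooth representations, and the category splits off the compact part). Since $\pi_\lambda$ is generated by a single vector supported on $J$, it is finitely generated. A finitely generated semisimple representation is a finite direct sum of irreducibles, and each summand is supercuspidal because its matrix coefficients are among those of $\pi_\lambda$. Alternatively, finite dimensionality of $\cH$ bounds the number of summands directly: $\cH$ is semisimple of finite dimension, and its minimal idempotents give the decomposition.

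I expect the main obstacle to be justifying compact support modulo centre of matrix coefficients (equivalently, semisimplicity) rigorously from the finite intertwining alone. If the general citation is unavailable, I would argue instead by Bernstein's criterion. An irreducible subquotient with a nonzero Jacquet module would give a $J$-eigenvector of type $\chi_\lambda$ in a parabolically induced representation. Applying the same Bruhat-cell analysis as in Proposition \ref{prop:finite_intertwine} with a cocharacter $\gamma$ in the split centre of the Levi, the $\FF_q$-stability condition forces some $\psi \in \supp(g\cdot\lambda)$ with $\pair{\dot\psi}{\gamma}>0$. This contradicts triviality on the corresponding unipotent radical.
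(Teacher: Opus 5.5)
Your proposal is correct and is essentially the paper's proof, which is the one-line deduction from Corollary~\ref{cor:admissible} and \cite[Theorem 1]{Bus90}: your Steps 2 and 3 (admissible plus compactly induced implies compactly supported matrix coefficients, hence a finite direct sum of irreducible supercuspidals) just unpack Bushnell's theorem, so Step 1 and the Bernstein-criterion fallback are not needed. One caution: compact induction alone does not give compactly supported matrix coefficients (think of $\ind_{G(k)_{x_0,0}}^{G(k)}\mathbbm{1}$), because the contragredient of $\pi_\lambda$ is the full induction $\mathrm{Ind}_{G(k)_{x,+}}^{G(k)}\chi_\lambda^{-1}$, and it is admissibility (via Proposition~\ref{prop:finite_intertwine}, each $G(k)_{x,r}$-fixed vector of it is supported on finitely many double cosets) that identifies this with the compact induction, exactly as in your ``cleanest route''.
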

\begin{proof}
    This follows immediately from Corollary \ref{cor:admissible} and \cite[Theorem 1]{Bus90}.
\end{proof}

% idea is that admissibility implies supercuspidality of the compactly induced representation - in terms of matrix coefficients, where the dual (which is the smooth induction of the dual rep) is actually the compact induction of the dual rep. Then result is formal from projectivity ? of supercuspidals

Finally, we describe some techniques for computing intertwining sets.

\begin{lemma}\label{lem:weylsupport}
    Let $n\in N(k)$ with image $w \in W(G)$ and let $a,b \in G(k)_{x,0}$. Let $\lambda \in V(x)^\vee$. If $a^{-1}nb \in I(G,G(k)_{x,+},\chi_\lambda)$, then 
    $$w(\supp(b\cdot \lambda)) \cap \Psi_x^+ \subset \supp(a \cdot \lambda).$$
    %where $\Psi_x^{\leq 0}$ denotes the affine roots taking non-positive value at $x$. 
\end{lemma}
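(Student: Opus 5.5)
First I would reduce to the case $a=b=1$. The set $I(G,G(k)_{x,+},\chi_\lambda)$ is a union of $G(k)_{x,+}$-double cosets. Conjugating by elements of $G(k)_{x,0}$ (which normalises $G(k)_{x,+}$) replaces $\chi_\lambda$ by $\chi_{a\cdot\lambda}$ or $\chi_{b\cdot\lambda}$. Concretely, $g=a^{-1}nb$ intertwines $\chi_\lambda$ if and only if $n$ intertwines $\chi_{b\cdot\lambda}$ with $\chi_{a\cdot\lambda}$. In other words, $\chi_{a\cdot\lambda}$ and $\chi_{b\cdot\lambda}^{n}$ (the conjugate character on $n^{-1}G(k)_{x,+}n$, or the other orientation depending on the convention $K^g=g^{-1}Kg$) agree on the intersection $G(k)_{x,+}\cap nG(k)_{x,+}n^{-1}$. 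I would check the direction of conjugation carefully against the paper's convention, so that the statement comes out as $w(\supp(b\cdot\lambda))$ rather than $w^{-1}$. Since the Hom space between characters is nonzero exactly when they coincide on the intersection, the intertwining condition becomes an equality of two characters on this intersection.

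Next I would fix $\psi\in\supp(b\cdot\lambda)$ with $w\psi\in\Psi_x^+$. The key fact is that conjugation by $n$ sends $U_\psi$ onto $U_{w\psi}$ up to a unit scalar: $n u_\psi(t) n^{-1}=u_{w\psi}(c\,t)$ with $c\in\cO_k^\times$. This holds because $n$ lies in $N(k)$ and $T_{\cO_k}(\cO_k)$ acts on root groups by units. Since $\psi(x)>0$ and $(w\psi)(x)>0$, both $U_\psi$ and $U_{w\psi}=nU_\psi n^{-1}$ lie in $G(k)_{x,+}$. Hence $U_{w\psi}$ lies in the intersection where the two characters must agree, and for $t\in\cO_k$ we get
$$\chi_{a\cdot\lambda}(u_{w\psi}(ct))=\chi_{b\cdot\lambda}(u_\psi(t)).$$
As $\psi\in\supp(b\cdot\lambda)$, the right-hand side is nontrivial for some $t$. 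Therefore $\chi_{a\cdot\lambda}$ is nontrivial on $U_{w\psi}$, that is, $w\psi\in\supp(a\cdot\lambda)$.

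The main obstacle is the bookkeeping. I need to confirm that the notion of support behaves correctly: supp is defined via nontriviality on $U_\psi$ itself, which is what was used above, so this is fine. I also need the sign and orientation of $w$ to match the given convention for $\tau^g$. If the convention forces $n^{-1}$ instead, I would replace $n$ by a representative with $w$ appropriately oriented. The unit scalar $c$ is harmless because $U_{w\psi}$ is an $\cO_k$-module image.

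For tuple elements of the support, the same argument applies componentwise, since membership in the support is defined via each $U_{\psi_i}$.
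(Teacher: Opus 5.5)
Your proposal is correct and is essentially the paper's argument. You use that $G(k)_{x,0}$ normalises $G(k)_{x,+}$ to absorb $a$ and $b$, then use $nU_\psi n^{-1}=U_{w\psi}$ to place $U_\psi$ inside $G(k)_{x,+}\cap n^{-1}G(k)_{x,+}n$ and transfer nontriviality of the character; the paper phrases this as a contradiction, while you argue directly.

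The orientation you were unsure about comes out exactly as you wrote it. With $K^g=g^{-1}Kg$, the intertwining condition for $a^{-1}nb$ reads $\chi_{b\cdot\lambda}(h)=\chi_{a\cdot\lambda}(nhn^{-1})$ for $h\in G(k)_{x,+}\cap n^{-1}G(k)_{x,+}n$. So your fallback of re-choosing $n$ is not needed, and in fact you could not replace $n$ by $n^{-1}$ without changing the statement.
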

\begin{proof}
    Suppose that $\psi \in \supp(b\cdot \lambda)= \supp(\chi_\lambda^{b^{-1}})$ and $w(\psi) \in \Psi_x^+$ but $w(\psi) \not\in \supp(a \cdot \lambda) = \supp(\chi_\lambda^{a^{-1}})$. If $a^{-1}nb$ intertwines $\chi_\lambda$, then $\chi_\lambda=\chi_\lambda^{a^{-1}nb}$ on $G(k)_{x,+} \cap G(k)_{x,+}^{a^{-1}nb}$. So $\chi_\lambda^{b^{-1}}=\chi_\lambda^{a^{-1}n}$ on $G(k)_{x,+}^{b^{-1}} \cap G(k)_{x,+}^{a^{-1}n} = G(k)_{x,+} \cap G(k)_{x,+}^n$. This equality follows from the fact that $G(k)_{x,+}$ is normal in $G(k)_{x,0}$. Since $U_\psi = U_{w(\psi)}^n$, and $w(\psi) \in \Psi_x^+$, we have $U_\psi \leq G(k)_{x,+} \cap G(k)_{x,+}^n$. By assumption, $\chi_\lambda^{a^{-1}}$ is trivial on $U_{w(\psi)}$, so $\chi_\lambda^{b^{-1}}=\chi_\lambda^{a^{-1}n}$ is trivial on $U_\psi=U_{w(\psi)}^{n}$, a contradiction to $\psi \in \supp(\chi_\lambda^{b^{-1}})$. 
\end{proof}

\begin{lemma}\label{lem:zeros}
    Let $n\in N(k)$ with image $w \in W(G)$ and let $a,b \in G(k)_{x,0}$. Let $\lambda \in V(x)^\vee$ be an $\FF_q$-stable functional. Let $\cF$ be the facet of $\cC$ of which $x$ is the barycentre. If $a^{-1}nb \in I(G,G(k)_{x,+},\chi_\lambda)$, then there does not exist $y \in \overline\cF$ such that $\theta(y)=0$ for all $\theta \in w(\supp(b \cdot \lambda)) \cap \Psi_x^+$.
\end{lemma}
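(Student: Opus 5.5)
The plan is to show that such a point $y$ would produce a cocharacter that violates $\FF_q$-stability of $b\cdot\lambda$ (Definition \ref{def:hm}). It seems the intertwining hypothesis is not needed beyond fixing $w$ and $b$. Lemma \ref{lem:weylsupport} is therefore not required, although it explains why the roots in $w(\supp(b\cdot\lambda))\cap\Psi_x^+$ are the natural ones to consider.

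Suppose, for contradiction, that $y\in\overline\cF$ satisfies $\theta(y)=0$ for every $\theta\in w(\supp(b\cdot\lambda))\cap\Psi_x^+$. Here $\supp$ means the set of affine roots $\psi$ with $\chi_{b\cdot\lambda}(U_\psi)\neq 1$; roots lying in a tuple of the support are included. Every such $\psi$ satisfies $\psi(x)>0$.

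\textbf{Step 1: every $\theta=w(\psi)$, with $\psi$ in the support, satisfies $\theta(y)\le 0$.}
\begin{itemize}
\item If $\theta\in\Psi_x^+$, then $\theta(y)=0$ by hypothesis.
\item Otherwise $\theta(x)\le 0$. No affine root changes sign on a facet, and $x$ is the barycentre of $\cF$, so $\theta\le 0$ on $\cF$. By continuity $\theta\le 0$ on $\overline\cF$, hence $\theta(y)\le 0$.
\end{itemize}

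\textbf{Step 2: transport back by $w$.} Since $\theta(y)=\psi(w^{-1}(y))$, we get
$$\psi(w^{-1}(y))\le 0<\psi(x)\quad\text{for all }\psi\in\supp(b\cdot\lambda).$$
Set $\gamma=w^{-1}(y)-x\in X_*(T)\otimes\RR$. Then $\pair{\dot\psi}{\gamma}=\psi(w^{-1}(y))-\psi(x)<0$ for every $\psi$ in the support.

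\textbf{Step 3: make $\gamma$ an integral cocharacter.} The set of admissible $y$ is cut out of $\overline\cF$ by finitely many linear equalities and inequalities with rational coefficients. Since it is nonempty, it contains a rational point, so we may take $y$ rational. Then $w^{-1}(y)$ and $x$ are rational, because $W(G)$ acts by rational affine maps and $x$ is a rational barycentre, so $\gamma$ is rational. Multiplying by a positive integer does not change the sign of any pairing, so we may assume $\gamma\in X_*(T)$.

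\textbf{Step 4: conclude.} The image of $b$ in $\GG_x(\FF_q)$ is an admissible $g$ in Definition \ref{def:hm}. That definition then gives some $\psi\in\supp(b\cdot\lambda)$ with $\pair{\dot\psi}{\gamma}>0$, contradicting Step 2.

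One degenerate case needs care. The pairing in Definition \ref{def:hm} is only asserted positive for some $\psi$, so we must rule out $\gamma=0$. If $\gamma=0$, the pairings would all vanish, but Step 2 shows they are all strictly negative. The support is nonempty by $\FF_q$-stability applied to any nonzero $\gamma$, so $\gamma\neq0$. Positive rescaling preserves both the strict negativity and $\gamma\neq 0$.

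The step needing most care is Step 1 for roots outside $\Psi_x^+$, that is, using that $\theta(x)\le 0$ forces $\theta\le 0$ on all of $\overline\cF$. The point is that $\cF$ is a facet, so each affine root is either identically zero on it or has constant sign on it.
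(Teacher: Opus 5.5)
Your proof is correct, and it reaches the contradiction by a different (dual) route from the paper's.

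\textbf{Shared starting point.} Both arguments begin with your Step 1: every $w(\psi)$ with $\psi\in\supp(b\cdot\lambda)$ either vanishes at $y$ or is non-positive on $\overline\cF$.

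\textbf{The paper's route.} The paper then invokes Lemma \ref{lem:pos_span} to produce $a_\psi\ge 0$, not all zero, with $\sum_\psi a_\psi\psi$ equal to a constant $c$. Evaluating at $x$ gives $c>0$, and evaluating at $w^{-1}y$ gives $c\le 0$.

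\textbf{Your route.} You stay on the primal side of that theorem of the alternative. The vector $w^{-1}(y)-x$ is itself a destabilising direction, and you contradict Definition \ref{def:hm} directly. This frees you from Davis's lemma. More importantly, it treats integrality honestly. Definition \ref{def:hm} only quantifies over $\gamma\in X_*(T)$, whereas Lemma \ref{lem:pos_span} needs positivity on every nonzero real vector. The paper makes this passage silently; it is valid because the cone $\{v:\pair{\dot\psi}{v}\le 0 \text{ for all } \psi\}$ is cut out by rational forms, but it requires an argument.

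\textbf{A shortcut for Step 3.} You do not need to choose $y$ rational. The strict conditions $\pair{\dot\psi}{\gamma}<0$ define a nonempty open set. Its points are automatically nonzero, since the support is nonempty. So it contains a rational point, which you then scale into $X_*(T)$.

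\textbf{What the paper's route buys.} It is more economical, and it matches the proof of Proposition \ref{prop:epipelagic}, where the same certificate $\sum_\psi a_\psi\psi=c$ is evaluated at two points.

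Your observation that the intertwining hypothesis plays no role is right; the paper's proof does not use it either.
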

\begin{proof}
    Suppose such a $y$ existed. Then for all $\psi \in \supp(b\cdot \lambda)$, we have that $w(\psi) \not\in \Psi_x^+$ (and so $w(\psi)$ is non-positive on $\cF$) or $\psi(w^{-1}y)=w(\psi)(y)=0$. By definition, $b\cdot \lambda$ is still $\FF_q$-stable. By Lemma \ref{lem:pos_span}, there exist real numbers $a_\psi \geq 0$ such that 
    $$\sum\limits_{\psi \in \supp(b\cdot \lambda)} a_\psi \psi \text{ is a constant function $c$.}$$ As each $\psi$ is positive on $x$, this constant $c$ must be positive. But then 
    $$c = \sum\limits_{\psi\in\supp(b\cdot \lambda)} a_\psi \psi(w^{-1}y) = \sum\limits_{\psi \in \supp(b\cdot \lambda)} a_\psi w(\psi)(y) \leq 0,$$ a contradiction.
\end{proof}

\section{Examples}

\subsection{$G=G_2$, $q=2$}

We take $q$ to be 2 and $G$ to be of type $G_2$ with affine Dynkin diagram labelled as below:

%%%%%%%%%%%%%%%%%%%% TYPE G2 %%%%%%%%%%%%%%%%%%%%

\begin{center}

\begin{tikzpicture}[
  scale=1.5,
  vertex/.style={fill=black, circle, inner sep=0pt, minimum size=6pt},
  empty vertex/.style={circle, draw=black, inner sep=0pt, minimum size=6pt},
  every path/.style={thick},
  label style/.style={below=4pt},
]

\coordinate (0) at (0,0);
\coordinate (1) at (1,0);        % alpha_2 center
\coordinate (2) at (2,0);     % alpha_3 to the right

% Nodes with labels
\node[empty vertex] (0) at (0) {};
\node[label style] at (0) {$-\alpha_0$};

\node[vertex] (1) at (1) {};
\node[label style] at (1) {$\alpha_2$};

\node[vertex] (2) at (2) {};
\node[label style] at (2) {$\alpha_1$};

% Edges - uniform length ~1 unit

\draw (0) -- (1);

% Triple bond lines close together
\draw (1)++(0,0.05) -- ++(1,0);
\draw (1)++(0,-0.05) -- ++(1,0);
\draw (1) -- ++(1,0);

\coordinate (Vmid2) at ($(1)!0.5!(2)$);
\begin{scope}[shift={(Vmid2)}, rotate=0]
  \draw[thick] (0,0.1) -- (0.15,0) -- (0,-0.1);
\end{scope}
\end{tikzpicture}
\end{center}

Let $\omega_1^\vee, \omega_2^\vee$ be the fundamental coweights. Set $x$ to be the point with Kac coordinates $11\Rrightarrow 1$ (the barycentre of the alcove $\cC$), so that $G(k)_{x,0}$ is an Iwahori subgroup of $G(k)$. The reductive quotient $\GG_x$ is a split rank 2 torus over $\FF_2$. In particular, $\GG_x(\FF_2)=1$ and so $G(k)_{x,+}=G(k)_{x,0}$. We set 
$$\cS(x) = \{1-\alpha_0,\alpha_1,\alpha_2,(\alpha_1+\alpha_2,2\alpha_1+\alpha_2)\}$$
and define the $\FF_2$-vector space $\cV(x)$ in the same way as in Section 2. Then the abelianisation $V(x)$ of $G(k)_{x,+:1}$ is isomorphic to $\cV(x)$ as an abelian group - see the author's forthcoming thesis for further details.

Let $\lambda \in V(x)^\vee$ be a functional with support containing $\{1-\alpha_0,\alpha_1+\alpha_2,2\alpha_1+\alpha_2\}$. Then $\lambda$ is an $\FF_2$-stable functional. Indeed the action of $\GG_x(\FF_2)$ is trivial, and if $\gamma = r_1\omega_1^\vee+r_2\omega_2^\vee$ is a cocharacter of $\TT_x$, where $r_i \in \ZZ$, such that 
$$\pair{\dot\psi}{\gamma} \leq 0 \text{ for all $\psi \in \{1-\alpha_0,\alpha_1+\alpha_2,2\alpha_1+\alpha_2\}$},$$ then we have the following inequalities:
\begin{align*}
    \pair{-\alpha_0}{\gamma} \leq 0 &\iff 3r_1+2r_2 \geq 0 \\
    \pair{\alpha_1+\alpha_2}{\gamma} \leq 0 &\iff r_1+r_2 \leq 0 \\
    \pair{2\alpha_1+\alpha_2}{\gamma} \leq 0 &\iff 2r_1+r_2 \leq 0
\end{align*}
These inequalities together imply $r_1=r_2=0$, as required.

Let $\lambda_1,\lambda_2$ be the two functionals in $V(x)^\vee$ with support containing $\{1-\alpha_0,\alpha_2,\alpha_1+\alpha_2,2\alpha_1+\alpha_2\}$.

\begin{prop}
    The compactly induced representations $$\pi_i := \ind_{G(k)_{x,+}}^{G(k)} \chi_{\lambda_i}$$ are irreducible and supercuspidal.
\end{prop}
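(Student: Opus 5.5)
By Theorem \ref{thm:main}, each $\pi_i$ is already a finite direct sum of irreducible supercuspidal representations, because $\lambda_i$ is $\FF_2$-stable: its support contains $\{1-\alpha_0,\alpha_1+\alpha_2,2\alpha_1+\alpha_2\}$, as checked above. So it remains to prove irreducibility. By Lemma \ref{lem:mackey_basic}, this reduces to showing that $I(G,G(k)_{x,+},\chi_{\lambda_i}) = G(k)_{x,+}$. The hypothesis of that lemma holds: the centre $Z(k)$ of $G_2$ is trivial, so $G(k)_{x,+}$ contains it.

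To compute the intertwining I would use Proposition \ref{prop:bruhat}. Since $\GG_x(\FF_2)$ is trivial, $G(k)_{x,0}=G(k)_{x,+}$, and every double coset has the form $G(k)_{x,+}\,n\,G(k)_{x,+}$ with $n\in N(k)$ lifting some $w\in W(G)$. For $G_2$ the group $\Omega$ is trivial, so $W(G)=W(\Psi)$. The elements $a,b\in G(k)_{x,0}$ act trivially on $\lambda_i$, so Lemma \ref{lem:weylsupport} gives a necessary condition for $n$ to intertwine:
$$w(\supp(\lambda_i))\cap\Psi_x^+\subset\supp(\lambda_i).$$
Here $\supp(\lambda_i)=\{1-\alpha_0,\alpha_1,\alpha_2,\alpha_1+\alpha_2,2\alpha_1+\alpha_2\}$ if the $\alpha_1$-coordinate of $\lambda_i$ is nonzero, and the same set without $\alpha_1$ otherwise. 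The two functionals $\lambda_1,\lambda_2$ differ only in that coordinate. Every element of the support takes a value of at most $3\delta(x)$ at $x$.

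The plan is to show that only $w=1$ passes this test. Since $w$ maps positive affine roots to positive affine roots only when $w=1$, the condition forces $w$ to send most of $\supp(\lambda_i)$ to negative affine roots. I would combine this with Lemma \ref{lem:zeros}. Because $x$ is the barycentre of an alcove, $\overline{\cF}=\overline{\cC}$ contains the three vertices, and no vertex may be a common zero of $w(\supp(\lambda_i))\cap\Psi^+$. Concretely, I would enumerate the possible images. The elements of $w(\supp)\cap\Psi^+$ must lie among five specific positive affine roots of small value at $x$, and their gradients must be compatible with $w$ acting linearly on gradients by an element of the dihedral group of order 12. The sets $\{w(\psi)\}$ are rigid: $\dot\alpha_2$ and $-\dot\alpha_0$ are long, and $\dot\alpha_1,\dot\alpha_1+\dot\alpha_2,2\dot\alpha_1+\dot\alpha_2$ are short. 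Going through the 12 linear parts, I would check that for each nontrivial $w$ either some image is positive but outside the support, or the positive images all vanish at a vertex of $\overline\cC$. The translation part of $w$ is then constrained, by the finiteness argument of Proposition \ref{prop:finite_intertwine}, to finitely many candidates, and those can be treated the same way.

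The main obstacle is that the support test alone may not exclude some $w$ that permutes the support, for example an element of the stabiliser of $\cC$ in $W(G)$. For $G_2$ this stabiliser is trivial, which helps. A residual case may still arise where $w$ maps the support into itself, for instance exchanging $\alpha_1+\alpha_2$ and $2\alpha_1+\alpha_2$ with a translation. For such a $w$ I would compute $\chi_{\lambda_i}$ and $\chi_{\lambda_i}^n$ on $G(k)_{x,+}\cap G(k)_{x,+}^n$ directly. The tuple $(\alpha_1+\alpha_2,2\alpha_1+\alpha_2)$ imposes that $\lambda_i$ takes the same value on $\overline u_{\alpha_1+\alpha_2}(r)$ and $\overline u_{2\alpha_1+\alpha_2}(r)$. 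I expect that tracking the signs and structure constants of $n$ (Lemma \ref{lem:constants}) through the relevant root subgroups will exhibit an element of this intersection on which the two characters differ. Most likely this element lies in $U_{\alpha_2}$ or $U_{\alpha_1}$, where the choice of $\lambda_i$ matters.
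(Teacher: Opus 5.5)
There is a genuine gap. Your reduction is the right one and matches the paper. Since $G(k)_{x,0}=G(k)_{x,+}$, $\Omega$ is trivial and $a,b$ act trivially on $\lambda_i$, Proposition \ref{prop:bruhat}, Lemma \ref{lem:weylsupport} and Lemma \ref{lem:mackey_basic} reduce everything to showing that $w=1$ is the only element of $W(G)$ with $w(\supp(\lambda_i))\cap\Psi_x^+\subset\supp(\lambda_i)$. That step is the whole content of the proposition, and you never carry it out.

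$W(G)$ is infinite. ``Going through the 12 linear parts'' and then handling ``finitely many'' translations via Proposition \ref{prop:finite_intertwine} is a plan, not an argument. That proposition concerns $\lambda$ vanishing on $G(k)_{x,+}\cap g^{-1}G(k)_{x,r}g$ rather than intertwining, and it yields no explicit list. Your inference that the test ``forces $w$ to send most of $\supp(\lambda_i)$ to negative affine roots'' is unjustified; the actual mechanism forces certain images to be \emph{positive}. Finally, the residual case is deferred to a structure-constant computation that you only expect to succeed.

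The missing idea is a positive relation among support elements common to both $\lambda_i$:
\[
(1-\alpha_0)+(\alpha_1+\alpha_2)+(2\alpha_1+\alpha_2)=1 .
\]
Since $w$ acts by affine automorphisms, the images also sum to the constant $1$; evaluate this at $x$. Images not in $\Psi_x^+$ contribute at most $0$. Images in $\Psi_x^+$ must lie in $\{1-\alpha_0,\alpha_1,\alpha_2,\alpha_1+\alpha_2,2\alpha_1+\alpha_2\}$, whose values at $x$ are $\frac16,\frac16,\frac16,\frac13,\frac12$. At most three distinct such values sum to at most $1$. Equality forces all three images to be positive, namely $2\alpha_1+\alpha_2$, $\alpha_1+\alpha_2$, and a root of value $\frac16$. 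That last root must be $1-\alpha_0$ for the sum to be the constant $1$.

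So $w$ permutes $\{1-\alpha_0,\alpha_1+\alpha_2,2\alpha_1+\alpha_2\}$. Since $\dot w$ is an isometry and $-\alpha_0$ is the only long gradient among the three, $w$ fixes $1-\alpha_0$. The three gradients span, so $w$ is determined by its action on the triple, giving $w\in\{1,w_{\alpha_1}\}$. But $w_{\alpha_1}(\alpha_2)=3\alpha_1+\alpha_2\in\Psi_x^+\setminus\supp(\lambda_i)$, so $w=1$.

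This also shows your feared residual case does not exist. The only element of $W(G)$ exchanging $\alpha_1+\alpha_2$ and $2\alpha_1+\alpha_2$ while preserving the support's long element is $w_{\alpha_1}$ itself, and the support test already excludes it. Neither Lemma \ref{lem:zeros} nor any direct computation of $\chi_{\lambda_i}^n$ is needed.
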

\begin{proof}
As $G(k)_{x,+}=G(k)_{x,0}$, it suffices to check that the only element $w$ of the affine Weyl group that intertwines $\chi_{\lambda_i}$ is trivial. By Lemma \ref{lem:weylsupport}, 
$$w(\{1-\alpha_0,\alpha_2,\alpha_1+\alpha_2,2\alpha_1+\alpha_2\}) \cap \Psi_x^+ \subset \{1-\alpha_0,\alpha_1,\alpha_2,\alpha_1+\alpha_2,2\alpha_1+\alpha_2\}.$$
We must also have 
$$w(1-\alpha_0)+w(\alpha_1+\alpha_2)+w(2\alpha_1+\alpha_2)=1$$ as $\alpha_0=3\alpha_1+2\alpha_2$. As the simple affine roots take value $1/6$ at $x$, we see that the above conditions are only compatible when 
$$w(\{1-\alpha_0,\alpha_1+\alpha_2,2\alpha_1+\alpha_2\}) = \{1-\alpha_0,\alpha_1+\alpha_2,2\alpha_1+\alpha_2\},$$
and so $w$ is either trivial or the simple reflection $w_{\alpha_1}$ in $\alpha_1=0$. The latter sends $\alpha_2$ to $3\alpha_1+\alpha_2 \in \Psi_x^+$, violating the condition from Lemma \ref{lem:weylsupport}. Therefore $w$ must be trivial. Lemma \ref{lem:mackey_basic} implies that the $\pi_i$ are irreducible and supercuspidal for $i=1,2$.
\end{proof}

However, 

\begin{prop}
    Let $y \in \overline \cC$ be the point with Kac coordinates $11 \Rrightarrow 0$, so that $\cS(y)=\Delta(y) = \{1-\alpha_0,\alpha_2,\alpha_1+\alpha_2,2\alpha_1+\alpha_2,3\alpha_1+\alpha_2\}$. Let $\eta \in V_y^\vee(\FF_2)$ be the functional with support $\{1-\alpha_0,\alpha_2,\alpha_1+\alpha_2,2\alpha_1+\alpha_2\}$. This is a stable functional. The representations $\pi_i$ are the two irreducible constituents of the representation $\pi_\eta$ constructed by \cite[Proposition 2.4]{RY14}.
\end{prop}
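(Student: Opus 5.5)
The plan is to realise each $\pi_i$ as a summand of $\pi_\eta$ by transitivity of induction through the inclusion $G(k)_{y,+}\le G(k)_{x,+}$, after checking separately that $\eta$ is stable.

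\textbf{Setup.} With Kac coordinates $11\Rrightarrow 0$ the labels are $1,2,3$ on $-\alpha_0,\alpha_2,\alpha_1$. So $\alpha_1(y)=0$, and the other simple affine roots take value $\delta(y)=1/3$ at $y$. Hence $\GG_y$ is a split reductive group over $\FF_2$ of semisimple rank one with roots $\pm\alpha_1$, and $y$ lies in the closure of $\cC$. Therefore
$$G(k)_{x,0}\le G(k)_{y,0}\quad\text{and}\quad G(k)_{y,+}\le G(k)_{x,+}=G(k)_{x,0}.$$

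\textbf{Step 1: stability of $\eta$.} Since $\langle 3\alpha_1+2\alpha_2,\alpha_1^\vee\rangle=0$, the line $V_{1-\alpha_0}$ is fixed by the derived group of $\GG_y$, and $\TT_y$ acts on it through the nontrivial character $-\alpha_0$. The $\alpha_1$-string $\alpha_2,\alpha_1+\alpha_2,2\alpha_1+\alpha_2,3\alpha_1+\alpha_2$ spans a $4$-dimensional representation which I expect to be (a twist of) the binary cubics. Under this identification $\eta$ corresponds to a pair (cubic, scalar). The cubic has coefficient vector $(1,1,1,0)$, so it is $X^3+X^2Y+XY^2$ up to normalisation, which has three distinct roots over $\overline\FF_2$; the scalar is nonzero. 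A cubic with nonzero discriminant has finite stabiliser in $\SL_2$ and closed orbit, and the extra $-\alpha_0$ weight then kills the central torus directions. Alternatively, I will simply note that this is the stable $G_2$ grading of \cite[Section 7.5]{RY14} and check Hilbert--Mumford via weights.

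\textbf{Step 2: restriction compatibility.} I will show $\chi_{\lambda_i}|_{G(k)_{y,+}}=\chi_\eta$. The group $G(k)_{y,+}$ is generated by $T_1$ and the $U_\psi$ with $\psi(y)>0$. Both characters are trivial on $T_1$ and on $G(k)_{x,1}$. The elements of $\cS(x)$ other than $\alpha_1$ all take value $1/3$ at $y$, while $U_{\alpha_1}\not\le G(k)_{y,+}$. So on $G(k)_{y,+}$ the character $\chi_{\lambda_i}$ only sees $1-\alpha_0,\alpha_2,\alpha_1+\alpha_2,2\alpha_1+\alpha_2$, where it is nontrivial exactly as $\eta$ is, and it vanishes on $U_{3\alpha_1+\alpha_2}$ and on $G(k)_{y,++}$. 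Over $\FF_2$ a nonzero functional on a line is unique, so the two restrictions agree. For the tuple $(\alpha_1+\alpha_2,2\alpha_1+\alpha_2)$, $\eta$ must be nonzero on both $V_{\alpha_1+\alpha_2}$ and $V_{2\alpha_1+\alpha_2}$; this is consistent with $\lambda_i$ factoring through the quotient $V_s$, since over $\FF_2$ that forces equal values.

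\textbf{Step 3: decomposition.} We have $G(k)_{x,0}/G(k)_{y,+}\cong$ Borel of $\GG_y(\FF_2)$. Since $\TT_y(\FF_2)=1$, this is $U_{\alpha_1}(\FF_2)$, of order $2$. Hence $\ind_{G(k)_{y,+}}^{G(k)_{x,+}}\chi_\eta$ is the sum of the two extensions of $\chi_\eta$. These extensions are exactly $\chi_{\lambda_1},\chi_{\lambda_2}$, which differ on $U_{\alpha_1}$. By transitivity of induction,
$$\pi_\eta=\pi_1\oplus\pi_2.$$
The $\pi_i$ are irreducible by the preceding proposition, so they are the irreducible constituents of $\pi_\eta$. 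It remains to show $\pi_1\not\cong\pi_2$. I would run the same argument as in the previous proof: apply Lemma \ref{lem:weylsupport}, with $\supp(a\cdot\lambda)$ replaced by $\supp(\lambda_2)$, together with the sum relation $w(1-\alpha_0)+w(\alpha_1+\alpha_2)+w(2\alpha_1+\alpha_2)=1$. This forces any $w$ intertwining $\chi_{\lambda_1}$ with $\chi_{\lambda_2}$ to be trivial, and $w=1$ fails because $\lambda_1\ne\lambda_2$ on $U_{\alpha_1}$. Alternatively, read this off from multiplicity-freeness in Proposition \ref{prop:RY}, since $\cH_\eta$ is $2$-dimensional and commutative.

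\textbf{Main obstacle.} I expect the main difficulty to be Step 2: confirming the identification of $V(x)$ (with the tuple and the vanishing on $T_1$) precisely enough that the restriction really lands in $V_y^\vee$ and equals $\eta$. The stability check in Step 1 is secondary.
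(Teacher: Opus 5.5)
Your proof is correct, but it goes from the key restriction identity to the conclusion by a different route from the paper's.

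\textbf{The shared input.} Both proofs rest on your Step 2: $\chi_{\lambda_i}$ and $\chi_\eta$ agree on $G(k)_{y,+}=G(k)_{x,+}\cap G(k)_{y,+}$, essentially because $U_{\alpha_1}\not\le G(k)_{y,+}$.

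\textbf{The paper's route.} It feeds this identity into Frobenius reciprocity and Mackey theory at the identity double coset. That only yields $\Hom_{G(k)}(\pi_i,\pi_\eta)\neq 0$. To conclude, the paper must import from \cite[Section 7.5]{RY14} that $\pi_\eta$ has exactly two irreducible constituents, and combine this with $\pi_1\not\cong\pi_2$.

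\textbf{Your route.} You observe that $[G(k)_{x,+}:G(k)_{y,+}]=2$: the image of the Iwahori subgroup in $\GG_y(\FF_2)$ is $U_{\alpha_1}(\FF_2)$, since $\TT_y(\FF_2)=1$. Hence $\ind_{G(k)_{y,+}}^{G(k)_{x,+}}\chi_\eta=\chi_{\lambda_1}\oplus\chi_{\lambda_2}$, and transitivity of induction gives the explicit isomorphism $\pi_\eta\cong\pi_1\oplus\pi_2$.

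\textbf{What your route buys.} It is stronger and more self-contained. The constituent count is recovered rather than cited. Stability of $\eta$ plays no role in the decomposition itself; it is only needed to place $\eta$ in the setting of \cite[Proposition 2.4]{RY14}. And $\pi_1\not\cong\pi_2$ is needed only to say the two constituents are distinct; your Weyl-group argument for this is exactly the paper's.

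\textbf{Your alternative for non-isomorphism.} The route via $\dim\cH_\eta=2$ also works, but it needs $|A_\eta|=2$, which you assert without proof. Note that $U_{\alpha_1}(\FF_2)\le A_\eta$ follows from your own Step 3, because $\chi_\eta$ extends to the Iwahori subgroup. What you still need is that the stabiliser of $\eta$ in $\GG_y(\FF_2)$, a group of order $6$, is no larger. Once that is checked, $G(k)_{y,\eta}=G(k)_{x,+}$, so the $\pi_i$ are literally the representations $\pi(\eta,\rho)$.

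\textbf{On Step 1.} Your fallback of checking Hilbert--Mumford against the $\TT_y$-weights of $\eta$ alone would not prove stability, since one must control $g\cdot\eta$ for every $g\in\GG_y(\overline\FF_2)$. It is the binary-cubic argument, or the citation of \cite[Section 7.5]{RY14} as in the paper, that does the work.
\begin{itemize}
\item In that argument the weight vectors match monomials only up to nonzero scalars over $\overline\FF_2$. Nothing is lost: $X(aX^2+bXY+cY^2)$ with $abc\neq 0$ still has three distinct roots in characteristic $2$.
\item The role of the $1-\alpha_0$ line is really closedness of the orbit: the central cocharacter $\omega_2^\vee$ acts with weights of opposite sign on the two summands of $V_y^\vee$. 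Finiteness of the stabiliser already follows from the cubic alone.
\end{itemize}
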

\begin{proof}
    The fact that $\eta$ is stable follows from \cite[Section 7.5]{RY14} or \cite[Section 4.2]{Rom16}. The fact that $\pi_\eta$ has two irreducible constituents follows from \cite[Section 7.5, Proposition 2.4]{RY14}. It suffices to show that $\pi_1,\pi_2$ are non-isomorphic, and that 
    $$\Hom_{G(k)}(\ind_{G(k)_{x,+}}^{G(k)}\chi_{\lambda_i}, \ind_{G(k)_{y,+}}^{G(k)} \chi_\eta) \neq 0.$$
    By Frobenius reciprocity and Mackey theory, the left hand side is isomorphic to 
    $$
    \bigoplus\limits_{g \in G(k)_{x,+} \backslash G(k) / G(k)_{y,+}}\Hom_{G(k)_{x,+} \cap \prescript{g}{}{G(k)_{y,+}}}(\chi_{\lambda_i} \mid_{G(k)_{x,+} \cap \prescript{g}{}{G(k)_{y,+}}}, \prescript{g}{}{\chi_\eta}\mid_{G(k)_{x,+} \cap \prescript{g}{}{G(k)_{y,+}}} ).
    $$
    We have $G(k)_{x,+} \cap G(k)_{y,+}=G(k)_{y,+}$. As $U_{\alpha_1} \not\leq G(k)_{y,+}$, we see that $\lambda_1,\lambda_2$ both agree with $\eta$ on $G(k)_{y,+}$, so that the above term is nonzero. Finally, we show that $\pi_1 \not\cong \pi_2$. If not, then
    $$\Hom_{G(k)}(\ind_{G(k)_{x,+}}^{G(k)}\chi_{\lambda_1}, \ind_{G(k)_{x,+}}^{G(k)} \chi_{\lambda_2}) \neq 0.$$
    The same argument, together with the affine Bruhat decomposition and the fact that $G(k)_{x,+}=G(k)_{x,0}$, implies that there exists $w\in W(G)$ such that $\chi_{\lambda_1},\prescript{w}{}{\chi_{\lambda_2}}$ agree on $G(k)_{x,+} \cap \prescript{w}{}{G(k)_{x,+}}$. Suppose $\psi \in \supp(\lambda_1)$. If $w^{-1}(\psi) \in \Psi_x^+$, then $U_{\psi} = \prescript{w}{}{U_{w^{-1}(\psi)}} \leq G(k)_{x,+} \cap \prescript{w}{}{G(k)_{x,+}}$. So we must have $w^{-1}(\psi) \in \supp(\lambda_2)$. In other words, $$w^{-1}(\supp(\lambda_1)) \cap \Psi_x^+ \subset \supp(\lambda_2).$$
    As in the previous proposition, this implies that $w$ is trivial. But $\lambda_1 \neq \lambda_2$, so the $\pi_i$ are distinct.
\end{proof}

\subsection{$G=B_n$, $q=2$}

We take $q$ to be 2 and $G$ to be of type $B_n$ with affine Dynkin diagram labelled as follows:

%%%%%%%%%%%%%%%% TYPE BN %%%%%%%%%%%%%%%%%%%%%%%

\begin{center}

\begin{tikzpicture}[
  scale=1.5,
  vertex/.style={fill=black, circle, inner sep=0pt, minimum size=6pt},
  empty vertex/.style={circle, draw=black, inner sep=0pt, minimum size=6pt},
  every path/.style={thick},
  label style/.style={below=4pt},
]

% Coordinates: symmetric above at 135 and 225 degrees
\coordinate (2) at (0,0);        % alpha_2 center
\coordinate (0) at ($(2)+(135:1)$);   % -alpha_0 at 135°
\coordinate (1) at ($(2)+(225:1)$);   % alpha_1 at 225°
\coordinate (3) at ($(2)+(1,0)$);     % alpha_3 to the right
\coordinate (dots) at ($(3)!0.5!(4,0)$); % ellipsis between alpha_3 and alpha_{n-1}
\coordinate (n-2) at (4,0);
\coordinate (n-1) at (5,0);
\coordinate (n) at (6,0);
\coordinate (arrowtip) at ($(n-1)!0.5!(n)$);

% Nodes with labels
\node[empty vertex] (0) at (0) {};
\node[label style] at (0) {$-\alpha_0$};

\node[vertex] (1) at (1) {};
\node[label style] at (1) {$\alpha_1$};

\node[vertex] (2) at (2) {};
\node[label style] at (2) {$\alpha_2$};

\node[vertex] (3) at (3) {};
\node[label style] at (3) {$\alpha_3$};

\node[vertex] (n-2) at (n-2) {};
\node[label style] at (n-2) {$\alpha_{n-2}$};

\node[vertex] (n-1) at (n-1) {};
\node[label style] at (n-1) {$\alpha_{n-1}$};

\node[vertex] (n) at (n) {};
\node[label style] at (n) {$\alpha_n$};

% Edges - uniform length ~1 unit
\draw (0) -- (2);
\draw (1) -- (2);
\draw (2) -- (3);
\draw (3) -- ++(1,0);         % partial edge toward ellipsis start
\draw (n-2) -- ++(-1,0);      % partial edge from ellipsis end
\draw (n-2) -- (n-1);

% Ellipsis centered properly
\node at (dots) {$\cdots$};

% Double bond lines close together
\draw (n-1)++(0,0.04) -- ++(1,0);
\draw (n-1)++(0,-0.04) -- ++(1,0);

% Custom ">" shaped arrowhead overlaid on double bond midpoint
\coordinate (Vmid) at (arrowtip);
\coordinate (Vtip) at ($(Vmid)+(0.15,0)$);
\coordinate (Vtop) at ($(Vmid)+(0,0.1)$);
\coordinate (Vbottom) at ($(Vmid)+(0,-0.1)$);
\draw[thick] (Vtop) -- (Vtip) -- (Vbottom);

\end{tikzpicture}
\end{center}

We have $\alpha_0 = \alpha_1+2(\alpha_2+\dots \alpha_{n})$. We take $x$ to be the barycentre of $\cC$. We find that 
$$V(x) \cong \bigoplus\limits_{\psi \in \cS(x)}V_\psi$$
for $\cS(x)= \Pi \sqcup \{(\alpha_{n-1}+\alpha_{n},\alpha_{n-1}+2\alpha_{n})\}$. Take $\lambda \in V(x)^\vee$ to be the unique $\FF_2$-linear functional with support $\cS(x)$. Then $\lambda$ is an $\FF_2$-stable functional. Indeed the action of $\GG_x(\FF_2)$ is trivial, and if $\gamma = \sum_i r_i\omega_i^\vee$ is a cocharacter of $\TT_x$, where $r_i \in \ZZ$ and $\omega_i^\vee$ are the fundamental coweights, such that 
$$\pair{\dot\psi}{\gamma} \leq 0 \text{ for all $\psi \in \Pi \sqcup \{\alpha_{n-1}+\alpha_{n},\alpha_{n-1}+2\alpha_{n}\}$},$$ then we have the following inequalities:
\begin{align*}
    \pair{-\alpha_0}{\gamma} \leq 0 &\iff r_1+2(r_2+\dots r_{n}) \geq 0 \\
    \pair{\alpha_i}{\gamma} \leq 0 &\iff r_i \leq 0 \text{ for each $i$ } \\
\end{align*}
These inequalities imply $r_i=0$ for all $i$, as required. Then, by Theorem \ref{thm:main}, letting $\chi$ be the unique nontrivial additive character $\chi:  \FF_2 \to \CC^\times$, the compactly induced representation $\pi_\lambda := \ind_{G(k)_{x,+}}^{G(k)} \chi_\lambda$ is a finite direct sum of irreducible supercuspidal representations of $G(k)$.

\begin{rem}
    A similar construction works for split groups of type $C_n$.
\end{rem}

\begin{prop}\label{prop:Bn}
    Each irreducible constituent of $\pi_\lambda$ has depth at least $\frac{1}{2(n-1)}$. At least one of them is epipelagic of depth $\frac{1}{2(n-1)}$.
\end{prop}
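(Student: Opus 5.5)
The plan is to prove the two assertions separately. For the lower bound I would redo the depth argument from the proof of Proposition \ref{prop:epipelagic}, but optimise the positive linear relation among the support. For the existence of an epipelagic constituent I would use an auxiliary point $y \in \overline\cC$ with Kac coordinate $0$ at $\alpha_n$ and $1$ elsewhere. The Kac labels of $B_n$ are $1,1,2,\dots,2$, so every simple affine root other than $\alpha_n$ takes the value $1/(2(n-1))$ at $y$, while $\alpha_n(y)=0$. At $x$ every simple affine root takes the value $1/(2n)$.

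\emph{Lower bound.} Suppose a constituent of $\pi_\lambda$, and hence $\pi_\lambda$ itself, has a nonzero vector fixed by $G(k)_{z,r+}$. As in the proof of Proposition \ref{prop:epipelagic}, use Mackey theory, Frobenius reciprocity and Proposition \ref{prop:bruhat}. This lets us assume $z \in \cA$ and that $b\cdot\lambda$ is trivial on $G(k)_{x,+}\cap G(k)_{z,r+}$ for some $b \in G(k)_{x,0}$. Since $\GG_x(\FF_2)$ is trivial we may take $b=1$. Every $\psi \in \supp(\lambda)=\Pi\cup\{\alpha_{n-1}+\alpha_n,\alpha_{n-1}+2\alpha_n\}$ is positive at $x$, so it must satisfy $\psi(z)\le r$. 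Otherwise $U_\psi \le G(k)_{x,+}\cap G(k)_{z,r+}$ and $\chi_\lambda$ would be nontrivial there. (For a member of the tuple we use that $\nu_\psi$ is surjective onto $V_s$.)

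Now take any relation $\sum_{\psi\in\supp(\lambda)} a_\psi\psi = c$ with $a_\psi\ge 0$ and $c>0$. Evaluating at $z$ gives $r \ge c/\sum a_\psi$, so it suffices to exhibit one good relation. The only relation among $\Pi$ is $\sum b_\psi\psi=1$, where the $b_\psi$ are the Kac labels and sum to $2n$. Writing $a',a''$ for the coefficients of the two non-simple roots, one computes $\sum a_\psi = 2nc - a' - 2a''$. The constraint is that the coefficient of $\alpha_n$, namely $2c-a'-2a''$, is nonnegative. The choice $a'=0$, $a''=c$, i.e. $(\alpha_{n-1}+2\alpha_n) + (\text{the remaining simple roots, }\alpha_{n-1}\text{ with coefficient }1) = 1$, gives total mass $2n-2$. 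Hence $r\ge 1/(2(n-1))$.

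\emph{Epipelagic constituent.} Since $y\in\overline\cC$, we have $G(k)_{y,+}\le G(k)_{x,+}$. Here $G(k)_{y,+}$ is generated by $T_{1/(2(n-1))}$ and the $U_\psi$ with $\psi(y)>0$, all of which satisfy $\psi(x)>0$. The key check is that $\chi_\lambda$ is trivial on $G(k)_{y,++}$. This group is generated by $T_1$, which lies in the kernel of $\nu$, and by the $U_\psi$ with $\psi(y)\ge 2/(2(n-1))$. For such $\psi$, the root $\psi$ does not appear in $\cS(x)$: the roots appearing in $\cS(x)$ all take value exactly $1/(2(n-1))$ at $y$, except $\alpha_n$, which takes value $0$. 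So $\nu_\psi=0$ for these $\psi$. Hence the function in $\pi_\lambda$ supported on $G(k)_{x,+}$ with value $1$ at the identity is fixed by $G(k)_{y,++}=G(k)_{y,r(y)+}$, where $r(y)=1/(2(n-1))$. Since $\pi_\lambda$ is a finite direct sum (Theorem \ref{thm:main}), some irreducible constituent has a nonzero $G(k)_{y,r(y)+}$-fixed vector. Its depth is therefore at most $r(y)$, and by the lower bound it equals $r(y)$, so it is epipelagic.

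The step I expect to need the most care is the reduction in the lower bound, in particular handling the tuple $(\alpha_{n-1}+\alpha_n,\alpha_{n-1}+2\alpha_n)$ correctly: nontriviality of $\chi_\lambda$ on $G(k)_{x,+}\cap G(k)_{z,r+}$ should follow from a single $U_\psi$ lying in that intersection. The optimisation itself is a short linear-programming check. As a cross-check, restricting $\lambda$ to $G(k)_{y,+}$ gives an $\FF_2$-stable but unstable functional on $V_y$, coming from the $\SL_2(\FF_2)$-invariant $X^2+XY+Y^2$, which matches the preceding Remark.
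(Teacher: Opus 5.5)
Your proposal is correct and follows the paper's proof. You use the same reduction from Proposition \ref{prop:epipelagic}, with $b=1$ because $\GG_x(\FF_2)$ is trivial, and a positive relation of total mass $2n-2$ among $\supp(\lambda)$ (you use $\alpha_{n-1}+(\alpha_{n-1}+2\alpha_n)$ where the paper uses $2(\alpha_{n-1}+\alpha_n)$, which works equally well); for the epipelagic constituent you take the same point $y$ with $\alpha_n(y)=0$, and your check that every root in $\cS(x)$ takes value at most $\frac{1}{2(n-1)}$ at $y$ is a more explicit version of the paper's verification that $\chi_\lambda$ is trivial on $G(k)_{y,\frac{1}{2(n-1)}+}$.
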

\begin{proof}
    By the proof of Proposition \ref{prop:epipelagic}, if $\pi_\lambda$ has a nontrivial fixed vector under $G(k)_{y',r+}$ for some $y' \in \cB$, then there exists $y \in \cA$ and $g \in \GG_x(\FF_q)$ such that $g\cdot \lambda$ is trivial on $G(k)_{x,+} \cap G(k)_{y,r+}$. In our case $\GG_x(\FF_2)$ is trivial. Let $y = \sum_i r_i \omega_i^\vee$ for $r_i \in \RR$. Then $\lambda$ is trivial on $G(k)_{x,+} \cap G(k)_{y,r+}$ if and only if the following inequalities hold:
$$
    1-(r_1+2(r_2+\dots r_{n})) \leq r, \hspace{1em}
    r_i \leq r \text{ for each $i$, } 
$$
$$
    r_{n-1}+r_{n} \leq r, \hspace{1em}
    r_{n-1}+2r_{n} \leq r.
$$
That $r \geq \frac{1}{2(n-1)}$ follows from the equation 
$$1-(r_1+2(r_2+\dots r_{n})) + r_1 + 2r_2+\dots +2r_{n-2} + 2(r_{n-1}+r_n) = 1.$$
When $r=\frac{1}{2(n-1)}$, a solution to the above inequalities is given by $r_1=\dots=r_{n-2}=r_{n-1} = \frac{1}{2(n-1)}$ and $r_n=0$. Let $y \in \overline \cC$ be the point with $\alpha_1(y)=\dots=\alpha_{n-1}(y)=\frac{1}{2(n-1)}$ and $\alpha_n(y)=0$. Then $\lambda$ is trivial on $G(k)_{y,\frac{1}{2(n-1)}+}$ and so $\pi_\lambda$ has a nonzero fixed vector under $G(k)_{y,\frac{1}{2(n-1)}+}$. Therefore, at least one irreducible constituent of $\pi_\lambda$ is epipelagic of depth $\frac{1}{2(n-1)}$.
\end{proof}

\begin{cor}\label{cor:Bn}
    When $n \geq 3$, at least one irreducible constituent of $\pi_\lambda$ does not arise from the construction of \cite{RY14}.
\end{cor}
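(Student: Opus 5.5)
Since Proposition \ref{prop:Bn} shows at least one constituent of $\pi_\lambda$ is epipelagic of depth $\frac{1}{2(n-1)}$, the argument is by depth. We show that no Reeder--Yu epipelagic representation of a split group of type $B_n$ has depth $\frac{1}{2(n-1)}$ once $n\geq 3$.

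\textbf{Step 1: Reeder--Yu depths are of the form $1/m$.} Each representation $\pi(\eta,\rho)$ of \cite[Proposition 2.4]{RY14} comes from a point $y$ admitting a stable functional $\eta \in V_y^\vee(\FF_q)$. By Proposition \ref{prop:epipelagic} (via \cite[Theorem 4.2]{FR17}), $y$ is, up to conjugacy, the barycentre of a facet in $\overline\cC$ and $\pi(\eta,\rho)$ has depth $r(y)=\delta(y)$. For such a barycentre, $\delta(y)=1/m$, where $m$ is the sum of the labels $b_\psi$ (from $\sum_{\psi\in\Pi} b_\psi\psi=1$) over $\psi \in \Pi(y)$. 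In the paper's normalisation the Kac coordinates of $y$ are $0$ or $1$, and $\delta(y)=1/m$.

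\textbf{Step 2: identify the admissible values of $m$.} By \cite[Corollary 5.1]{RY14}, for a split group admitting stable vectors, the possible $m$ are exactly the regular elliptic numbers of the Weyl group, i.e.\ the orders of its elliptic $\ZZ$-regular elements (the Kac-coordinate description of \cite{RLYG12} is independent of $p$ by \cite{FR17}). For type $B_n$ (and $C_n$) the Weyl group is the hyperoctahedral group. Its regular elliptic numbers are the even divisors $m$ of $2n$. Concretely, these are $m=2n/d$ with $m$ even, where $d$ is the number of equal negative cycles of length $m/2$ in a regular elliptic element. In particular the Coxeter number $2n$ is the largest.

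\textbf{Step 3: the arithmetic check.} We need $2(n-1)$ not to be an even divisor of $2n$ when $n\geq 3$. This holds because $2(n-1)\mid 2n$ forces $(n-1)\mid n$, hence $n-1=1$, i.e.\ $n=2$. So for $n\geq 3$ no Reeder--Yu epipelagic representation has depth $\frac{1}{2(n-1)}$.

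\textbf{Step 4: conclude, and handle the remaining case.} Depth is an invariant of the isomorphism class, so the constituent found in Proposition \ref{prop:Bn} is not isomorphic to any $\pi(\eta,\rho)$. The proof must also rule out that some non-epipelagic-looking construction of Reeder--Yu could produce this constituent. It cannot, because all of their representations are epipelagic of depth $\delta(y)$ by Proposition \ref{prop:epipelagic}.

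\textbf{Main obstacle.} The delicate step is Step 2: extracting from \cite[Corollary 5.1]{RY14} the precise list of stable points in type $B_n$ and their values $\delta(y)$, uniformly in small $p$, including $p=2$. If the direct citation is unclear, the fallback is to test candidate facets with $\delta(y)=\frac{1}{2(n-1)}$ directly. For each such facet, find a cocharacter violating the Hilbert--Mumford criterion, using the positive-combination argument of Lemma \ref{lem:pos_span}.
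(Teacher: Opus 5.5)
Your proposal is correct and takes essentially the same route as the paper. Both compare depths, using the classification of points admitting stable functionals together with Proposition \ref{prop:epipelagic}; the paper's $\delta = m/2n$ with $m \mid n$, from \cite[Theorem 4.1]{RY14} and \cite[Section 8.2.2]{RLYG12}, is exactly your list of depths $1/m$ with $m$ an even divisor of $2n$, and both conclude from $2(n-1) \nmid 2n$ for $n \geq 3$ (your explicit appeal to \cite{FR17} to cover $p=2$ adds precision but does not change the argument).
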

\begin{proof}
    Combining \cite[Theorem 4.1]{RY14} and \cite[Section 8.2.2]{RLYG12}, the points $x \in \cA$ at which $V_x^\vee$ admits an $\FF_q$-rational stable functional satisfy $\delta(x) = m/2n$, where $m \mid n$. By Proposition \ref{prop:epipelagic}, the corresponding supercuspidal representations constructed by \cite[Proposition 2.4]{RY14} have depth $m/2n$. When $n \geq 3$ this does not take the value $\frac{1}{2(n-1)}$.
\end{proof}

We also compute the intertwining set $I(G,G(k)_{x,+},\chi_\lambda)$. Let $\sigma \in W(\Psi)$ be the unique order 2 element of the extended affine Weyl group that stabilises $\cC$. The action on $\Pi$ is given by the unique order 2 symmetry of the affine Dynkin diagram of type $B_n$. We have that $\sigma \in W(G)$ if $G = \mathrm{Spin}_{2n+1}$ (so $G$ is simply connected) and not if $G=\mathrm{SO}_{2n+1}$ (so $G$ is the adjoint group). In particular, $\sigma \in G(k)_{x,0}=G(k)_{x,+}$ if $G = \mathrm{Spin}_{2n+1}$ and not if $G=\mathrm{SO}_{2n+1}$. Let $w_{\alpha_n}$ denote the simple reflection in $W(G)$ corresponding to $\alpha_n$. Then $\sigma w_{\alpha_n}=w_{\alpha_n}\sigma$. 

\begin{prop}\label{prop:BnI}
    Let $n \geq 5$. The intertwining set $I(G,G(k)_{x,+},\chi_\lambda)$ is the union of four (not necessarily disjoint) $G(k)_{x,+}$-double cosets represented by lifts to $N(k)$ of $1,w_{\alpha_n}, \sigma$ and $\sigma w_{\alpha_n}$.
\end{prop}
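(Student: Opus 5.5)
Since $x$ is the barycentre of $\cC$ we have $G(k)_{x,+}=G(k)_{x,0}$ (as $q=2$ and $\GG_x$ is a split torus), so by Proposition \ref{prop:bruhat} every double coset is represented by a lift $n$ of some $w\in W(G)$, with $a=b=1$. The plan has two halves. First, I will use the necessary condition of Lemma \ref{lem:weylsupport} to cut the candidates down to $\{1,w_{\alpha_n},\sigma,\sigma w_{\alpha_n}\}$ (intersected with $W(G)$). Second, I will verify directly that each of these intertwines $\chi_\lambda$.

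\textbf{Reduction.} Write $\supp(\lambda)=\Pi\sqcup\{\alpha_{n-1}+\alpha_n,\alpha_{n-1}+2\alpha_n\}$. If $w$ intertwines, then
$$w(\supp(\lambda))\cap\Psi_x^+\subset\supp(\lambda).$$
Moreover, by Lemma \ref{lem:zeros} the images $w(\psi)$ cannot all become non-positive on a common face. I will use the relation $(1-\alpha_0)+\alpha_1+2(\alpha_2+\dots+\alpha_{n-2})+2(\alpha_{n-1}+\alpha_n)=1$, as in the proof of Lemma \ref{lem:zeros}. Applying $w$ gives a positive combination of elements of $w(\supp\lambda)$ summing to $1$. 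All values at $x$ are multiples of $\delta(x)=1/(2n-1)$, and each $w(\psi)$ with positive value lies in $\supp(\lambda)$, whose elements have value $\delta$ or $2\delta$ or $3\delta$. From this I aim to show that $w$ permutes the set $\{1-\alpha_0,\alpha_1,\dots,\alpha_{n-2},\alpha_{n-1}+\alpha_n\}$ up to the $B_2$-part. This is a counting argument: the total value is $1=(2n-1)\delta$, and non-positive images would force the remaining images to exceed what $\supp(\lambda)$ allows.

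\textbf{Main obstacle.} The main obstacle is turning this into a clean classification. Concretely, I will show that $w$ must preserve the "chain" $1-\alpha_0,\alpha_1,\alpha_2,\dots,\alpha_{n-2}$ of the affine diagram, up to the diagram symmetry swapping $1-\alpha_0$ and $\alpha_1$, and must preserve the rank-two subsystem spanned by $\alpha_{n-1},\alpha_n$. I expect to use that $n\ge5$ is needed so that the long chain $\alpha_2,\dots,\alpha_{n-2}$ is rigid: it cannot be mapped onto the short pieces $\alpha_{n-1}+\alpha_n$, $\alpha_{n-1}+2\alpha_n$. Once the chain is fixed pointwise or swapped by $\sigma$, $w$ (composed with $\sigma$ if needed) fixes the hyperplane data except in the $\alpha_n$-direction. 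The only remaining freedom is then the reflection $w_{\alpha_n}$. This reflection swaps $\alpha_{n-1}$ and $\alpha_{n-1}+2\alpha_n$, fixes $\alpha_{n-1}+\alpha_n$, and sends $\alpha_n$ to $-\alpha_n\notin\Psi_x^+$, so it is consistent with the support condition. Any other $w$ sends some element of the support to a positive affine root outside $\supp(\lambda)$ (e.g. $2\alpha_{n-1}+2\alpha_n+\dots$ or $\alpha_{n-2}+\alpha_{n-1}$), which contradicts Lemma \ref{lem:weylsupport}.

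\textbf{Sufficiency.} For $w=w_{\alpha_n}$, the group $G(k)_{x,+}\cap G(k)_{x,+}^n$ is generated by $T_0$ and the $U_\psi$ with $\psi,w(\psi)\in\Psi^+$, i.e. all positive affine roots except $\alpha_n$. I will check that $\chi_\lambda$ and $\chi_\lambda^n$ agree on each such generator modulo $G(k)_{x,1}$ using the explicit $\nu$. The key input is that $w_{\alpha_n}$ swaps $\alpha_{n-1}$ and $\alpha_{n-1}+2\alpha_n$, both in the support. It also preserves the tuple $(\alpha_{n-1}+\alpha_n,\alpha_{n-1}+2\alpha_n)$ up to the relation defining $V_s$; here I must track signs of $u_\psi$ under $n$, which are irrelevant over $\FF_2$. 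For $\sigma$: it preserves $\cC$ and $\Pi$ and fixes the $B_2$ tail, so it preserves $\supp(\lambda)$. Hence $\chi_\lambda^\sigma=\chi_\lambda$, since $\lambda$ is the unique functional with that support. Closure under products then gives $\sigma w_{\alpha_n}$.
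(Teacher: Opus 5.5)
\textbf{Sufficiency.} Your sufficiency half is fine and agrees with the paper, which simply notes that these four elements send every element of $\supp(\lambda)$ that stays positive on $\cC$ back into $\supp(\lambda)$. One small correction: $I(G,G(k)_{x,+},\chi_\lambda)$ is not closed under products in general. Passing to $\sigma w_{\alpha_n}$ works here only because a lift of $\sigma$ normalises $G(k)_{x,+}$ and fixes $\chi_\lambda$.

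\textbf{The gap is in the necessity half.} Evaluating $w$ of your positive relation at $x$ gives a single linear equation in the values of the images, and this cannot determine $w$. The values $\delta(x),2\delta(x),3\delta(x)$ available in $\supp(\lambda)$ leave slack. For instance, send $\alpha_1$ to a long root of value $-\delta(x)$, some $\alpha_i$ with $2\le i\le n-2$ to $\alpha_{n-1}+2\alpha_n$ (value $3\delta(x)$), and $\alpha_{n-1}+\alpha_n$ to $\alpha_n$. This leaves the total unchanged and even respects root lengths. So the claim that non-positive images ``would force the remaining images to exceed what $\supp(\lambda)$ allows'' does not follow from counting. Also, $\delta(x)=1/(2n)$, not $1/(2n-1)$: the coefficients $b_\psi$ sum to the Coxeter number $2n$. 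Your ``main obstacle'' paragraph then states the classification you need without giving a mechanism. It also misidentifies the danger. $\dot w$ is an isometry, so lengths are automatically respected (and $\alpha_{n-1}+2\alpha_n$ is long, not short). The real threat is a reversal of the chain, with $w(\alpha_{n-2})=\alpha_2$.

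\textbf{What the paper supplies.}
\begin{enumerate}
\item Apply Lemma \ref{lem:zeros} at the vertices of $\overline\cC$. This shows that $A:=w(\cS(x))\cap\cS(x)$ contains each of $1-\alpha_0,\alpha_1,\dots,\alpha_{n-2}$. If one were missing, the vertex at which all other simple affine roots vanish would be a common zero of $A$, since $\alpha_{n-1}+\alpha_n$ and $\alpha_{n-1}+2\alpha_n$ vanish there too. The same lemma also forces enough of the tail into $A$.
\item Since $\dot w$ preserves the pairing, count negative pairings. The only elements of $\cS(x)$ whose gradients pair negatively with at least three other elements of $\cS(x)$ are $\alpha_2$ and $\alpha_{n-2}$. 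The partners $1-\alpha_0,\alpha_1,\alpha_3$ of $\alpha_2$ all lie in $A$; this is where $n\ge 5$ is used. Hence $w^{-1}(\alpha_2)\in\{\alpha_2,\alpha_{n-2}\}$.
\item Rule out $w(\alpha_{n-2})=\alpha_2$. This would force $w$ to carry the long negative partners $\alpha_{n-3},\alpha_{n-1},\alpha_{n-1}+2\alpha_n$ of $\alpha_{n-2}$ onto $1-\alpha_0,\alpha_1,\alpha_3$. That is impossible, because $(\alpha_{n-1}+2\alpha_n)-\alpha_{n-1}$ is twice an affine root and no difference of two of $1-\alpha_0,\alpha_1,\alpha_3$ is.
\item Only then does $w$ fix $\alpha_2,\dots,\alpha_{n-2}$ and fix or swap $1-\alpha_0,\alpha_1$. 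After composing with $\sigma$, a case analysis on which tail roots lie in $A$, using Lemma \ref{lem:weylsupport}, leaves $w\in\{1,w_{\alpha_n}\}$.
\end{enumerate}
Without an argument of this kind, your reduction is a statement of the desired outcome rather than a proof.
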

\begin{proof}
    As we are working in the setting $q=2$, we have that $G(k)_{x,+}=G(k)_{x,0}$. Note that in the simply-connected case, the $G(k)_{x,+}$-double cosets of $1$ and $\sigma$ agree, and also of $w_{\alpha_n}$ and $\sigma w_{\alpha_n}$ agree. By the affine Bruhat decomposition \ref{prop:bruhat}, it suffices to show that the elements of $W(G)$ that intertwine $\chi_\lambda$ are $1,w_{\alpha_n}, \sigma$ and $\sigma w_{\alpha_n}$. These certainly intertwine as their action on $\Psi$ takes $\supp(\lambda)$ to itself so that $\lambda=\lambda^w$ on $G(k)_{x,+}\cap G(k)_{x,+}^w$ for any of $w= 1,w_{\alpha_n}, \sigma$ and $\sigma w_{\alpha_n}$.

    On the other hand, by Lemma \ref{lem:weylsupport}, if $w \in W(G)$ intertwines $\chi_\lambda$, then $$w(\supp(\lambda)) \cap \Psi_x^+ \subset \supp(\lambda), \text{ where } \supp(\lambda) = \cS(x).$$
    Moreover, by Lemma \ref{lem:zeros}, the set $w(\cS(x)) \cap \cS(x)$ has no zeros in $\overline \cC$. This set must therefore contain $1-\alpha_0,\alpha_1,\dots,\alpha_{n-2}$ and also $\{\alpha_{n-1}+\alpha_n\}$, $\{\alpha_{n-1},\alpha_n\}$, $\{\alpha_{n-1},\alpha_{n-1}+2\alpha_n\}$ or $\{\alpha_{n},\alpha_{n-1}+2\alpha_n\}$. The only two elements of $\cS(x)$ whose gradients pair negatively with the gradients of at least three other elements of $\cS(x)$ are $\alpha_2$ and $\alpha_{n-2}$. For $\alpha_2$, these three elements are $1-\alpha_0,\alpha_1$ and $\alpha_3$. As these are all in $w(\cS(x)) \cap \cS(x)$ (as $n \geq 5$), it follows that $w(\alpha_2)=\alpha_2$ or $w(\alpha_{n-2})=\alpha_2$. Here we use the fact that the Weyl group acts by isometries on the roots. 
    
    The three affine roots in $\cS(x)$ whose gradients are long roots that pair negatively with $\alpha_{n-2}$ are $\alpha_{n-3}$, $\alpha_{n-1}$, $\alpha_{n-1}+2\alpha_n$. In the case that $w(\alpha_{n-2})=\alpha_2$, we must have, again by the fact that the Weyl group acts by isometries, that 
    $$w(\{\alpha_{n-3}, \alpha_{n-1}, \alpha_{n-1}+2\alpha_n\}) = \{1-\alpha_0,\alpha_1,\alpha_3\}.$$
    But $\alpha_{n-1}+2\alpha_n - \alpha_{n-1}$ is twice an affine root, while the difference of any two of $1-\alpha_0,\alpha_1,\alpha_3$ is not twice an affine root, so such $w$ does not exist.

    Given $w(\alpha_2)=\alpha_2$, again using combinatorics of the affine Dynkin diagram, $w$ must fix $\alpha_3,\dots, \alpha_{n-2}$ and either fix or swap $1-\alpha_0,\alpha_1$. Composing with $\sigma$, which swaps $1-\alpha_0,\alpha_1$, we may assume that $w$ fixes them. We must show that $w=1$ or $w=w_{\alpha_n}$. Out of the remaining elements $\alpha_{n-1}$, $\alpha_n$, $\alpha_{n-1}+\alpha_n$ and $\alpha_{n-1}=2\alpha_n$, the only one to pair with $\alpha_{n-2}$ to 0 is $\alpha_n$. So if $\alpha_n \in w(\cS(x)) \cap \cS(x)$ then $w(\alpha_n)=\alpha_n$ and the only possibility is $w=1$. Similarly, $\alpha_{n-1}+\alpha_n$ is the only short root of the four that pairs negatively with $\alpha_{n-2}$. If $\alpha_{n-1}+\alpha_n \in w(\cS(x))\cap \cS(x)$ then $w(\alpha_{n-1}+\alpha_n)=\alpha_{n-1}+\alpha_n$. One of $\alpha_{n-1}$ or $\alpha_{n-1}+2\alpha_n$ must then map under $w$ to a positive affine root at $x$, necessarily in $\cS(x)$ by Lemma \ref{lem:weylsupport}. The only possibility is that $$w(\{\alpha_{n-1},\alpha_{n-1}+2\alpha_n\})=\{\alpha_{n-1},\alpha_{n-1}+2\alpha_n\}.$$
    This gives $w=1$ or $w=w_{\alpha_{n}}$. In the last case that $$w(\cS(x))\cap \cS(x) = \{1-\alpha_0,\alpha_1,\dots,\alpha_{n-2},\alpha_{n-1},\alpha_{n-1}+2\alpha_n\}$$ we similarly find that $$w(\{\alpha_{n-1},\alpha_{n-1}+2\alpha_n\})=\{\alpha_{n-1},\alpha_{n-1}+2\alpha_n\}$$
    and so $w=1$ or $w=w_{\alpha_n}$. This exhausts all cases and completes the proof.
\end{proof}

We have seen that $\pi_\lambda$ is a direct sum of irreducible supercuspidal representations of $G(k)$. In fact we have:

\begin{cor}
    If $G=\mathrm{Spin}_{2n+1}$ with $n \geq 5$, then $\pi_\lambda$ is a direct sum of two non-isomorphic irreducible supercuspidal representations of $G(k)$. If $G=\mathrm{SO}_{2n+1}$ with $n \geq 5$, then $\pi_\lambda$ is a direct sum of four non-isomorphic irreducible supercuspidal representations of $G(k)$.
\end{cor}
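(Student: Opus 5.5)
The plan is to apply Lemma \ref{lem:mackey} with $J=G(k)_{x,+}$ and $H=G(k)_x$, the stabiliser of $x$. Then $H_{\chi_\lambda}$ is the stabiliser of $\chi_\lambda$ in $G(k)_x$. We must verify the hypothesis $I(G,J,\chi_\lambda)=H_{\chi_\lambda}$ and compute the algebra $\cH_{\chi_\lambda}$.

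First we locate $G(k)_x$. Since $q=2$, we have $G(k)_{x,0}=G(k)_{x,+}$. The quotient $G(k)_x/G(k)_{x,0}$ is the subgroup $\Omega_x\subset\Omega$ stabilising $\cC$.
\begin{itemize}
\item For $\mathrm{Spin}_{2n+1}$, $\Omega\cong\ZZ/2$ acts trivially on the facets, because $\sigma$ already lies in $W(\Psi)$. Hence $G(k)_x=G(k)_{x,0}$.
\item For $\mathrm{SO}_{2n+1}$, $\Omega\cong\ZZ/2$ is generated by $\sigma$, so $G(k)_x=G(k)_{x,0}\sqcup \dot\sigma G(k)_{x,0}$.
\end{itemize}
By Proposition \ref{prop:BnI}, $I(G,J,\chi_\lambda)$ is the union of the double cosets of $1,w_{\alpha_n},\sigma,\sigma w_{\alpha_n}$. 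The element $w_{\alpha_n}$ does not stabilise $x$, so the double coset of $w_{\alpha_n}$ is not contained in $G(k)_x$. Therefore the hypothesis of Lemma \ref{lem:mackey} fails for $H=G(k)_x$, and I need a larger $H$.

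Instead, I take $H$ to be the stabiliser in $G(k)$ of the facet spanned by $x$ and $w_{\alpha_n}(x)$. Concretely, this is the parahoric at the point $z$ with $\alpha_n(z)=0$, together with $\Omega$-elements; $z$ is the barycentre of the codimension-one facet $\alpha_n=0$. Then:
\begin{itemize}
\item $G(k)_{z,0}$ contains $G(k)_{x,0}$ with index $|\mathbb{P}^1(\FF_2)|=3$, and $\GG_z(\FF_2)\cong \SL_2(\FF_2)$ or $\mathrm{PGL}_2(\FF_2)$.
\item $J=G(k)_{x,0}$ is \emph{not} normal in $G(k)_{z,0}$.
\end{itemize}
So Lemma \ref{lem:mackey} does not apply verbatim. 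I would replace it with the general intertwining criterion. By Mackey theory and Proposition \ref{prop:bruhat},
$$\End_{G(k)}(\pi_\lambda)\cong \cH(G(k),J,\chi_\lambda),$$
the Hecke algebra of functions supported on $I(G,J,\chi_\lambda)$. By Proposition \ref{prop:BnI}, this algebra has dimension equal to the number of distinct double cosets.
\begin{itemize}
\item For $\mathrm{Spin}_{2n+1}$ there are $2$ distinct double cosets, since those of $1,\sigma$ coincide and those of $w_{\alpha_n},\sigma w_{\alpha_n}$ coincide.
\item For $\mathrm{SO}_{2n+1}$ there are $4$.
\end{itemize}
Each double coset supports a one-dimensional space of functions, because the intertwining is by a character.

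The key step is to show this Hecke algebra is commutative. Then $\dim\End(\pi_\lambda)$ equals the number of constituents, and since $\pi_\lambda$ is a finite direct sum of irreducibles (Theorem \ref{thm:main}), commutativity of the endomorphism algebra forces multiplicity one. This gives $2$ (respectively $4$) pairwise non-isomorphic summands.

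To prove commutativity, I would produce explicit generators and relations.
\begin{itemize}
\item Let $T_w$ be the function supported on $J\dot wJ$. The double cosets are $1,w_{\alpha_n},\sigma,\sigma w_{\alpha_n}$, and $\sigma$ commutes with $w_{\alpha_n}$.
\item The element $T_\sigma$ is invertible with $T_\sigma^2$ a scalar. This holds because $\sigma$ normalises $J$ and fixes $\chi_\lambda$: its action on $\Pi$ preserves $\supp(\lambda)=\cS(x)$, and over $\FF_2$ the functional is determined by its support.
\item The element $T_{w_{\alpha_n}}$ satisfies a quadratic relation coming from the rank-one group $\langle U_{\alpha_n},U_{-\alpha_n}\rangle$.
\end{itemize}
The relation $\sigma w_{\alpha_n}=w_{\alpha_n}\sigma$ should then give $T_\sigma T_{w_{\alpha_n}}=c\,T_{w_{\alpha_n}}T_\sigma$ for some scalar $c$. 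The main obstacle is checking that $c=1$, or, if not, that the algebra is still semisimple commutative of the stated dimension. I would try to compute $c$ directly from the cocycle $\chi_\lambda(\dot\sigma\dot w\dot\sigma^{-1}\dot w^{-1})$. This quantity lies in $T_{\cO_k}(\cO_k)\subset J$, and $\chi_\lambda$ is trivial on $T_1$ and on $T(\FF_2)=1$, which suggests $c=1$.

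Finally, $T_{w_{\alpha_n}}$ satisfies a quadratic relation over $\CC$ with distinct roots, so it splits into two eigenspaces. Combined with $T_\sigma$ this gives $2$ or $4$ distinct characters of the algebra, hence that many non-isomorphic constituents.
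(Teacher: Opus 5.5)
Your proposal is correct. Once the abandoned attempt via Lemma~\ref{lem:mackey} is set aside, it is essentially the paper's argument: identify $\End_{G(k)}(\pi_\lambda)$ with the $\chi_\lambda$-spherical Hecke algebra, read off its dimension ($2$ or $4$) from Proposition~\ref{prop:BnI}, and obtain multiplicity one from commutativity, which comes from $\sigma$ normalising the Iwahori subgroup and commuting with $w_{\alpha_n}$. Your check that the twisting scalar $c=\chi_\lambda(\dot\sigma\dot w\dot\sigma^{-1}\dot w^{-1})$ equals $1$ (because $T_{\cO_k}(\cO_k)=T_1\subset\ker\chi_\lambda$ when $q=2$) makes explicit a point the paper passes over; the quadratic-relation paragraph is superfluous, and in the $\mathrm{Spin}_{2n+1}$ case dimension $2$ alone already forces two non-isomorphic summands, as the paper notes.
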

\begin{proof}
    By combining \cite[Lemma 11.2]{BH1} and \cite[Proposition 11.3]{BH1}, the endomorphism algebra of $\pi_\lambda = \ind_{G(k)_{x,+}}^{G(k)} \chi_\lambda$ is isomorphic to the $\chi_\lambda$-spherical Hecke algebra of $G(k)$, and this has basis given by the characteristic functions of the distinct $G(k)_{x,+}$-double cosets that make up $I(G,G(k)_{x,+},\chi_\lambda)$ described above. In the simply-connected case there are two such double cosets so that $\mathrm{End}_{G(k)}(\pi_\lambda)$ is 2-dimensional. By Schur's lemma, $\pi_\lambda$ is a direct sum of two non-isomorphic irreducible supercuspidal representations of $G(k)$. In the adjoint case, $\mathrm{End}_{G(k)}(\pi_\lambda)$ is 4-dimensional. Then $\pi_\lambda$ is either a direct sum of four non-isomorphic irreducible supercuspidals or a direct sum of two isomorphic irreducible supercuspidals. We are in the former case if and only if the $\chi_\lambda$-spherical Hecke algebra $\cH(G,\chi_\lambda)$ is commutative. As $\sigma$ normalises the Iwahori subgroup $G(k)_{x,0}=G(k)_{x,+}$, we find that in $\cH(G,\chi_\lambda)$, writing $K$ for $G(k)_{x,+}$,
    $$\mathbbm{1}_{K\sigma K} * \mathbbm{1}_{K w_{\alpha_n}K} = \mathbbm{1}_{K\sigma w_{\alpha_n} K} = \mathbbm{1}_{K w_{\alpha_n}\sigma K} = \mathbbm{1}_{K w_{\alpha_n}K} * \mathbbm{1}_{K\sigma K}$$
    by \cite[Section 3.1]{IM1}. Similarly, each of the basis elements $\mathbbm{1}_K$, $\mathbbm{1}_{K\sigma K}$, $\mathbbm{1}_{Kw_{\alpha_n} K}$ and $\mathbbm{1}_{K\sigma w_{\alpha_n} K}$ of $\cH(G,\chi_\lambda)$ commute, so $\cH(G,\chi_\lambda)$ is commutative.
\end{proof}

Assume now that $G=\mathrm{Spin}_{2n+1}$ with $n \geq 5$. We will explicitly describe the two irreducible supercuspidal representations above as compactly induced representations. As in Proposition \ref{prop:Bn}, take $y \in \overline \cC$ to be the point with $\alpha_1(y)=\dots =\alpha_{n-1}(y)=\frac{1}{2(n-1)}$ and $\alpha_n(y)=0$. Then 
$$\Delta(y) = \{1-\alpha_0,\alpha_1,\dots,\alpha_{n-1}, \alpha_{n-1}+\alpha_n, \alpha_{n-1}+2\alpha_n\}.$$
Over $\FF_2$, noting that $\GG_m(\FF_2)=1$, the reductive quotient at $y$ has $\FF_2$-points $\GG_y(\FF_2)=\SL_2(\FF_2)$. Let $\lambda'$ denote the $\FF_2$-linear functional on $G(k)_{y,+:++}$ with support $\Delta(y)$. This is not stable for the action of $\GG_y$, as we know from the classification of stable points in \cite{RLYG12} (see Corollary \ref{cor:Bn}). However, it is $\FF_2$-stable as $\GG_y(\FF_2)=\SL_2(\FF_2)$ acts trivially on $\lambda'$. The argument of \cite{RY14} continues to imply that the intertwining set satisfies $I(G,G(k)_{y,+},\chi_{\lambda'}) \subset G(k)_y=G(k)_{y,0}$. This could otherwise be seen using the same argument as Proposition \ref{prop:BnI}, where this time any lift to $N(k)$ of $w_{\alpha_n}$ is in $G(k)_y$. In fact we have 
$$I(G,G(k)_{y,+},\chi_{\lambda'}) = G(k)_y$$
because $\GG_y(\FF_2)$ acts trivially on $\lambda'$. By \cite[Lemma 2.2, Remark 1]{RY14}, the representation $\ind_{G(k)_{y,+}}^{G(k)}\chi_{\lambda'}$ decomposes as a direct sum of three non-isomorphic supercuspidals, indexed by the three irreducible representations of $\SL_2(\FF_2) \cong S_3$. The image of the Iwahori subgroup $G(k)_{x}=G(k)_{x,+} \subset G(k)_{y}$ in $\GG_y(\FF_2)$ is a subgroup of order 2, given by $U_{\alpha_n}/U_{\alpha_n+1}$. The two nontrivial irreducible representations $\rho_1,\rho_2$ of $\SL_2(\FF_2)$ are nontrivial on any such order 2 subgroup. In the notation of Section 3.1, it follows that 
$$
\Hom_{G(k)}\left(\ind_{G(k)_{x}}^{G(k)} \chi_\lambda, \ind_{G(k)_{y}}^{G(k)} (\chi_{\lambda'})_{\rho_i} \right) \neq 0
$$ for $i=1,2$, because $\lambda$ is nontrivial on $U_{\alpha_n}/U_{\alpha_n+1}$. Therefore 
$$\pi_\lambda \cong \ind_{G(k)_y}^{G(k)}(\chi_{\lambda'})_{\rho_1} \oplus \ind_{G(k)_y}^{G(k)}(\chi_{\lambda'})_{\rho_2}.$$

The proof of Proposition \ref{prop:epipelagic} applies here to show that both of these irreducible supercuspidals are epipelagic of depth $\frac{1}{2(n-1)}$.

\iffalse
\begin{rem}
    When considering extensions of $k$, the construction of $\pi_\lambda$ via compact induction from $G(k)_{x,+}$ cannot be extended beyond the case $q=2$, as only in this case is $G(k)_{x,+:1}^\ab$ larger than $G(k)_{x,+:++}$. At the point $y$, one could replace $\lambda'$ by $\FF_q$-linear functionals, but for large $q$ these can no longer be stable.....
\end{rem}
\fi

\subsection{$G=G_2$, $p=3$}

We take $p$ to be 3 and $G$ to be of type $G_2$. Let $x$ be the barycentre of the facet of $\cC$ defined by $\alpha_2=0$, so $x$ has Kac coordinates $10\Rrightarrow 1$. The root system of $\GG_x$ is the sub-root system of $\Phi$ generated by $\{\alpha_2\}$. Therefore, $\GG_x$ is a quotient of $\SL_2 \times \GG_m$ by a finite central subgroup. One may calculate that 
$$V(x) \cong \bigoplus\limits_{\psi \in \cS(x)} V_\psi$$
for $\cS(x) = \Delta(x) \sqcup \{3\alpha_1+\alpha_2, 3\alpha_1+2\alpha_2\}$ and $\Delta(x) = \{1-\alpha_0,1-\alpha_0+\alpha_2,\alpha_1,\alpha_1+\alpha_2\}$. The irreducible subrepresentations of $V(x)$ under the action of $\GG_x$ are $V_{1-\alpha_0} \oplus V_{1-\alpha_0+\alpha_2}$, $V_{\alpha_1}\oplus V_{\alpha_1+\alpha_2}$ and $V_{3\alpha_1+\alpha_2}\oplus V_{3\alpha_1+2\alpha_2}$.

Define an $\FF_q$-linear functional $\lambda \in V(x)^\vee$ as follows. Decompose $\lambda$ as 
$$\lambda = \bigoplus\limits_{\psi \in \cS(x)} \lambda_{-\dot\psi}$$ with $\lambda_{-\dot\psi} \in V_\psi^\vee$. Set $\lambda_{-(\alpha_1+\alpha_2)}=0, \lambda_{-(3\alpha_1+\alpha_2)}=0$. Take $\lambda_{\alpha_0}$, $\lambda_{\alpha_0-\alpha_2}$, $\lambda_{-\alpha_1}$ and $\lambda_{-(3\alpha_1+2\alpha_2)}$ to be any nonzero $\FF_q$-linear functionals in the respective $V_\psi^\vee$. We claim that this gives an $\FF_q$-stable functional.

Let $g \in \GG_x(\FF_q)$. The factor $\GG_m$ of $\SL_2 \times \GG_m$ acts by nonzero scaling on each summand of $V(x)^\vee$. Identify $V_{1-\alpha_0}^\vee \oplus V_{1-\alpha_0+\alpha_2}^\vee$, $V_{\alpha_1}^\vee\oplus V_{\alpha_1+\alpha_2}^\vee$ and $V_{3\alpha_1+\alpha_2}^\vee\oplus V_{3\alpha_1+2\alpha_2}^\vee$ as the standard representation of $\SL_2$. As $(\lambda_{\alpha_0},\lambda_{\alpha_0-\alpha_2})$, $(\lambda_{-\alpha_1}, 0)$ and $(0, \lambda_{-(3\alpha_1+2\alpha_2)})$ are pairwise linearly independent under this identification, so are their images under the action of $g$. Therefore, $\supp(g\cdot \lambda)$ contains at least two elements of each of $\{1-\alpha_0,\alpha_1,3\alpha_1+\alpha_2\}$ and $\{1-\alpha_0+\alpha_2,\alpha_1+\alpha_2,3\alpha_1+2\alpha_2\}$. It also contains at least one element of each of $\{1-\alpha_0,1-\alpha_0+\alpha_2\}$, $\{\alpha_1,\alpha_1+\alpha_2\}$ and $\{3\alpha_1+\alpha_2,3\alpha_1+2\alpha_2\}$. 

Let $$\gamma =  r_1\omega_1^\vee + r_2\omega_2^\vee$$ be a cocharacter of $\TT_x$, where $r_i \in \ZZ$. Suppose that for some $g \in \GG_x(\FF_q)$ we have that 
$$\pair{\dot\psi}{\gamma} \leq 0 \text{ for all $\psi \in \supp(g\cdot\lambda)$.}$$
We have the following inequalities:
\begin{align*}
    \pair{-\alpha_0}{\gamma} \leq 0 &\iff 3r_1+2r_2 \geq 0 \\
    \pair{-\alpha_0+\alpha_2}{\gamma} \leq 0 &\iff 3r_1+r_2 \geq 0 \\
    \pair{\alpha_1}{\gamma} \leq 0 &\iff r_1 \leq 0 \\
    \pair{\alpha_1+\alpha_2}{\gamma} \leq 0 &\iff r_1+r_2 \leq 0 \\
    \pair{3\alpha_1+\alpha_2}{\gamma} \leq 0 &\iff 3r_1+r_2 \leq 0 \\
    \pair{3\alpha_1+2\alpha_2}{\gamma} \leq 0 &\iff 3r_1+2r_2 \leq 0 
\end{align*}

From the above discussion, $\supp(g\cdot \lambda)$ contains one of the following sets:
\begin{equation}\label{eqn:quad}
\{1-\alpha_0,\alpha_1,1-\alpha_0+\alpha_2,3\alpha_1+2\alpha_2\}, \{1-\alpha_0,\alpha_1,\alpha_1+\alpha_2,3\alpha_1+2\alpha_2\},
\end{equation}
$$
\{1-\alpha_0,3\alpha_1+\alpha_2,1-\alpha_0+\alpha_2,\alpha_1+\alpha_2\}, \{1-\alpha_0,3\alpha_1+\alpha_2,3\alpha_1+2\alpha_2,\alpha_1+\alpha_2\},
$$
$$
\{\alpha_1,3\alpha_1+\alpha_2,1-\alpha_0+\alpha_2,\alpha_1+\alpha_2\}, \{\alpha_1,3\alpha_1+\alpha_2,1-\alpha_0+\alpha_2,3\alpha_1+2\alpha_2\}.
$$

In the first two cases, the inequalities force $3r_1+2r_2=0$, $r_1 \leq 0$ and $3r_1+r_2 \geq 0$ or $r_1+r_2 \leq 0$. The only solution is $r_1=r_2=0$. In the last two cases, the inequalities force $3r_1+r_2=0$, $r_1 \leq 0$ and $r_1+r_2 \leq 0$ or $3r_1+2r_2 \leq 0$. The only solution is again $r_1=r_2=0$. In the third case, $3r_1+r_2=0$, $r_1+r_2 \leq 0$ and $3r_1+2r_2 \geq 0$. In the fourth case, $3r_1+2r_2=0$, $r_1+r_2 \leq 0$ and $3r_1+r_2 \leq 0$. Once again, the only solution is $r_1=r_2=0$.

This verifies the $\FF_q$-stability of $\lambda$. By Theorem \ref{thm:main}, fixing a nontrivial additive character $\chi: \FF_q \to \CC^\times$, the compactly induced representation $\pi_\lambda := \ind_{G(k)_{x,+}}^{G(k)} \chi_\lambda$ is a finite direct sum of irreducible supercuspidal representations of $G(k)$.

Let $G(k)_{x,\lambda}$ denote the stabiliser of $\chi_\lambda$ in $G(k)_{x,0}$.

\begin{prop}
    We have $I(G,G(k)_{x,+},\chi_\lambda)=G(k)_{x,\lambda}$.
\end{prop}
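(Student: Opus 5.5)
By the affine Bruhat decomposition (Proposition \ref{prop:bruhat}), every $g \in G(k)$ can be written as $g=a^{-1}nb$ with $a,b \in G(k)_{x,0}$ and $n \in N(k)$ a lift of some $w \in W(G)$. The inclusion $G(k)_{x,\lambda} \subseteq I(G,G(k)_{x,+},\chi_\lambda)$ is immediate, since $G(k)_{x,+}$ is normal in $G(k)_{x,0}$ and elements of $G(k)_{x,\lambda}$ fix $\chi_\lambda$. For the reverse inclusion, note that $g$ intertwines $\chi_\lambda$ if and only if $n$ intertwines $\chi_{b\cdot\lambda}$ with $\chi_{a\cdot\lambda}$. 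So it suffices to show two things: the image $w$ is trivial (modulo elements of $W(G)$ fixing $x$, which lift into $G(k)_{x,0}$), and then an element of $G(k)_{x,0}$ that intertwines $\chi_\lambda$ in fact stabilises it.

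The second point is formal. If $h \in G(k)_{x,0}$, then $G(k)_{x,+} \cap G(k)_{x,+}^h = G(k)_{x,+}$, so intertwining means $\chi_\lambda^h=\chi_\lambda$, that is, $h \in G(k)_{x,\lambda}$. The real work is therefore to show that any $w \in W(G)$ arising from an intertwining element lies in $W(x)$, and so has a lift in $G(k)_{x,0}$.

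To do this, I will combine Lemma \ref{lem:weylsupport} and Lemma \ref{lem:zeros} with the support analysis already carried out for the $\FF_q$-stability check:
\begin{itemize}
\item Lemma \ref{lem:weylsupport} gives $w(\supp(b\cdot\lambda)) \cap \Psi_x^+ \subset \supp(a\cdot\lambda) \subset \cS(x)$.
\item By the discussion preceding the proposition, $\supp(b\cdot\lambda)$ contains one of the six four-element sets in (\ref{eqn:quad}).
\item Each of those sets has a positive linear combination equal to a positive constant. For example, $(1-\alpha_0)+\alpha_1 + \dots$; these come from the relation $\alpha_0=3\alpha_1+2\alpha_2$.
\end{itemize}

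Applying $w$ preserves such a relation. By Lemma \ref{lem:zeros}, $w(\supp(b\cdot\lambda))\cap\Psi_x^+$ has no common zero on $\overline{\cF}$, the segment from $x_0$ direction where $\alpha_2=0$. Here I would use the values of the elements of $\cS(x)$ at $x$:
\begin{itemize}
\item the elements of $\Delta(x)$ take value $\delta(x)=1/4$;
\item $3\alpha_1+\alpha_2$ and $3\alpha_1+2\alpha_2$ take value $3/4$.
\end{itemize}
Evaluating the relation at $x$ then bounds which affine roots $w$ can send into $\cS(x)$. Elements of the set not sent into $\Psi_x^+$ must be sent to affine roots that are non-positive at $x$, and the constant in the relation stays fixed. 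I expect to conclude that $w$ permutes the relevant subset of $\cS(x)$ up to the gradients preserved by $W(x)=\langle w_{\alpha_2}\rangle$, which forces $w \in W(x)$.

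The main obstacle is this finite combinatorial case analysis over $W(G)$. I will handle it by first using the constant-sum relation together with the values at $x$ to restrict $w(x)$ to finitely many points: $\langle \dot\psi, w^{-1}x - x\rangle$ is bounded above on the support, as in Proposition \ref{prop:finite_intertwine}. I will then eliminate the translations and rotations one by one, checking against each of the six sets in (\ref{eqn:quad}) that some root is mapped into $\Psi_x^+\setminus\cS(x)$, or into an element of $\cS(x)$ not in $\supp(a\cdot\lambda)$. The latter is controlled because $a\cdot\lambda$ has at most one zero in each $\SL_2$-block.
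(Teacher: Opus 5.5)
Your reduction is correct and is the same set-up the paper uses. You write an intertwiner as $a^{-1}nb$ via Proposition \ref{prop:bruhat} and observe that an element of $G(k)_{x,0}$ intertwines $\chi_\lambda$ only if it stabilises it. You then reduce to showing that $w(x)=x$, using that $W(x)=\langle w_{\alpha_2}\rangle$ lifts into $G(k)_{x,0}=G(k)_x$ because $G$ is simply connected.

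The gap is that this last step is the entire content of the proposition, and you do not prove it. You say you ``expect to conclude'' that $w\in W(x)$, and you defer to an elimination of translations and rotations that you never carry out. No specific $w$ is actually ruled out.

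Your finiteness input is also misattributed. Proposition \ref{prop:finite_intertwine} concerns $\lambda$ vanishing on an intersection, not intertwining. The bound you want comes from Lemma \ref{lem:weylsupport}: elements of $\cS(x)$ take values $1/4$ or $3/4$ at $x$, so $w(\psi)(x)\le 3/4$ for every $\psi\in\supp(b\cdot\lambda)$. Combined with Lemma \ref{lem:lin_alg}, this gives only finitely many candidate $w$. Even so, you do not indicate how any of them is eliminated.

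The missing idea, which makes the check short, is that each of the six sets in (\ref{eqn:quad}) contains a complementary pair $\{\psi,1-\psi\}$ with $\psi=\alpha_0$ or $\alpha_0-\alpha_2$.
\begin{enumerate}
\item Since $w(\psi)+w(1-\psi)=1$, one of the two images lies in $\Psi_x^+$, hence in $\cS(x)$ by Lemma \ref{lem:weylsupport}. Because $\cS(x)$ takes only the values $1/4$ and $3/4$ at $x$, the other image is then positive at $x$ as well.
\item So $w$ maps the pair onto one of the only two complementary pairs in $\cS(x)$, namely $\{1-\alpha_0,\alpha_0\}$ and $\{1-\alpha_0+\alpha_2,\alpha_0-\alpha_2\}$.
\item A lift $n_{\alpha_2}\in G(k)_{x,0}$ swaps these two pairs. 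Multiplying $n$ by it on the right (changing $b$) and then on the left (changing $a$), you may assume $w(\alpha_0)\in\{\alpha_0,1-\alpha_0\}$. This leaves $\dot w\in\{1,w_{\alpha_1}\}$ or $\dot w\in\{w_{\alpha_0},-1\}$.
\item The remaining pair in the quadruple is one of $\{\alpha_1,1-\alpha_0+\alpha_2\}$, $\{\alpha_1,\alpha_1+\alpha_2\}$ or $\{\alpha_1+\alpha_2,3\alpha_1+\alpha_2\}$. It has a positive combination equal to $c+m\phi$ with $c\geq 0$, $m>0$ and $\phi\in\{1-\alpha_0,\alpha_0\}$. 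Since $w(\phi)$ is positive at $x$, $w$ sends one member of this pair into $\cS(x)$.
\item Comparing gradients then settles each case. The case $\dot w=w_{\alpha_1}$ is impossible outright. For $\dot w=w_{\alpha_0}$ or $-1$ you get a non-integral constant term, for example $w(\alpha_2)=\tfrac12-3\alpha_1-\alpha_2$ or $w(\alpha_1)=\tfrac13-\alpha_1$. For $\dot w=1$ the translation must fix an affine root whose gradient is independent of $\alpha_0$ and must fix $\alpha_0$, so it is trivial.
\end{enumerate}
This is the argument your proposal needs to supply.
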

\begin{proof}
    Let $n \in N(k)$ with image $w\in W(G)$ and $a,b \in G(k)_{x,0}$ such that $a^{-1}nb \in I(G,G(k)_{x,+},\chi_\lambda)$. It suffices to show that $w(x)=x$, as then this implies $a^{-1}nb \in G(k)_{x,\lambda}$. Any lift $n_{\alpha_2} \in N(k)$ of the reflection in $\alpha_2=0$ lies in $G(k)_{x,0}=G(k)_x$ as it fixes $x$, where this last equality holds because $G$ is simply connected. By replacing $n$ by $nn_{\alpha_2}^{-1}$ and $b$ by $n_{\alpha_2}b$ if necessary, we may assume that $\supp(b\cdot \lambda) \supset \{1-\alpha_0,\alpha_0\}$. Since $w(1-\alpha_0)+w(\alpha_0)=1$ is positive on $x$, at least one of $w(1-\alpha_0)$ and $w(\alpha_0)$ is in $\Psi_x^+$. By Lemma \ref{lem:weylsupport}, this implies that 
    $$\{w(1-\alpha_0),w(\alpha_0)\} = \{1-\alpha_0,\alpha_0\} \text{ or } \{1-\alpha_0+\alpha_2,\alpha_0-\alpha_2=3\alpha_1+\alpha_2\}.$$
    By replacing $n$ by $n_{\alpha_2}n$ and $a$ by $n_{\alpha_2}a$ if necessary, we may assume that $\{w(1-\alpha_0),w(\alpha_0)\} = \{1-\alpha_0,\alpha_0\}$. The support of $b\cdot \lambda$ is larger than $\{1-\alpha_0,\alpha_0\}$, and in fact must also contain one of the pairs 
    $$\{\alpha_1,1-\alpha_0+\alpha_2\}, \{\alpha_1,\alpha_1+\alpha_2\}, \{\alpha_1+\alpha_2, 3\alpha_1+\alpha_2\}$$
    as we have seen in (\ref{eqn:quad}). In each pair, a positive linear combination is $1-\alpha_0$ or $\alpha_0$, and it follows that $w$ must map at least one element $\psi$ of any pair contained in $\supp(b\cdot \lambda)$ to an element of $\Psi_x^+$. By Lemma \ref{lem:weylsupport}, such an element must also be in $\supp(a\cdot \lambda) \subset \cS(x)$. Let $\dot w$ denote the image of $w$ under the projection $W(G) \to W(\Phi)$. In particular, we must have that 
    \begin{equation}\label{eqn:g2weyl}
    \dot w(\dot \psi) \in \{\alpha_1,\alpha_1+\alpha_2,3\alpha_1+\alpha_2,3\alpha_1+\alpha_2,-\alpha_0,-\alpha_0+\alpha_2\}.
    \end{equation}

    Suppose that $w(\alpha_0)=1-\alpha_0$. Then $\dot w$ is either $w_{\alpha_0}$ or the long element of the Weyl group, $w_0=-1$. In the case $\dot w = w_{\alpha_0}$, in order for (\ref{eqn:g2weyl}) to hold, and for $w(\psi) \in \cS(x)$ for such $\psi$, we must have that $\psi=\alpha_1$ and $w(\alpha_1)=\alpha_1$. Together with the equations $w(\alpha_0)=1-\alpha_0$ and $\alpha_0=3\alpha_1+2\alpha_2$, this implies that 
    $$w(\alpha_2) = \frac{1}{2}-3\alpha_1-\alpha_2,$$
    which is impossible. In the case $\dot w = -1$, in order for (\ref{eqn:g2weyl}) to hold, and for $w(\psi) \in \cS(x)$ for such $\psi$, we must have that $w(1-\alpha_0+\alpha_2)=3\alpha_1+\alpha_2=\alpha_0-\alpha_2$. Together with the equations $w(\alpha_0)=1-\alpha_0$ and $\alpha_0=3\alpha_1+2\alpha_2$, this implies that $w(\alpha_2)=-\alpha_2$ and so 
    $$w(\alpha_1) = \frac{1}{3}-\alpha_1,$$
    which is again impossible.

    Suppose that $w(\alpha_0)=\alpha_0$. Then $\dot w$ is either $w_{\alpha_1}$ or the identity element of the Weyl group. In the case $\dot w=w_{\alpha_1}$, (\ref{eqn:g2weyl}) cannot occur. If $\dot w=1$, then by Lemma \ref{lem:weylsupport}, we find that $w$ must fix an element of one of the three pairs of affine roots, so that $w$ is trivial and indeed $w(x)=x$.
\end{proof}

We now describe $G(k)_{x,\lambda}$. Firstly, we claim that $\GG_x \cong \GL_2$. Our argument follows \cite[Section 4.2]{Rom16}. We know that $\GG_x$ is a quotient of $\SL_2 \times \GG_m$ by a finite central subgroup and so is isomorphic to one of $\SL_2 \times \GG_m$, $\mathrm{PGL}_2 \times \GG_m$ or $\GL_2$ by \cite[Lemma 8.1]{GKM04}. The map $\GG_m \to \GG_x$ on the second factor is given by $\omega_1^\vee \in \Hom_{\FF_q}(\GG_m,\TT_x)$. If $\GG_x \cong \SL_2 \times \GG_m$ or $\mathrm{PGL}_2\times \GG_m$, then there would be a character $\beta \in \Hom_{\FF_q}(\TT_x,\GG_m) \cong X^*(T)$ that is trivial on $\SL_2 \cap \TT_x$ or $\mathrm{PGL}_2 \cap \TT_x$ respectively (so $\pair{\beta}{\alpha_2^\vee}=0$), and satisfies $\pair{\beta}{\omega_1^\vee}=1$. But no such character $\beta$ exists. So $\GG_x \cong \GL_2$.

The restrictions of each of $V_{1-\alpha_0}^\vee \oplus V_{1-\alpha_0+\alpha_2}^\vee$, $V_{\alpha_1}^\vee\oplus V_{\alpha_1+\alpha_2}^\vee$ and $V_{3\alpha_1+\alpha_2}^\vee\oplus V_{3\alpha_1+2\alpha_2}^\vee$ to $\SL_2 \leq \GG_x$ are the standard representation of $\SL_2$. The action of the centre of $\GG_x$, given by $\omega_1^\vee(\GG_m)$, is determined by the following equations
$$\pair{\alpha_0}{\omega_1^\vee}=3, \pair{-\alpha_1}{\omega_1^\vee}=-1, \pair{-3\alpha_1-\alpha_2}{\omega_1^\vee}=-3.$$
Identifying $\omega_1^\vee(t)$ with $\begin{psmallmatrix}
    t&0\\0&t
\end{psmallmatrix}\in \GL_2$, we find 
$$V_{1-\alpha_0}^\vee \oplus V_{1-\alpha_0+\alpha_2}^\vee \cong \mathrm{std}\otimes \det$$
$$V_{\alpha_1}^\vee\oplus V_{\alpha_1+\alpha_2}^\vee \cong \mathrm{std} \otimes \det^{-1}$$
and 
$$V_{3\alpha_1+\alpha_2}^\vee\oplus V_{3\alpha_1+2\alpha_2}^\vee \cong \mathrm{std}\otimes \det^{-2}$$
where $\mathrm{std}$ denotes the standard representation of $\GL_2$. Suppose $g \in \GL_2(\FF_q)$ stabilises $\lambda$. We previously set $\lambda_{-(\alpha_1+\alpha_2)}=0$ and $\lambda_{-(3\alpha_1+\alpha_2)}=0$, so $g$ must stabilise $\begin{psmallmatrix}
    1\\0
\end{psmallmatrix} \in \mathrm{std}\otimes \det^{-1}$ and $\begin{psmallmatrix}
    0\\1
\end{psmallmatrix} \in \mathrm{std}\otimes \det^{-2}$. The only such matrix is the identity. We conclude that $G(k)_{x,\lambda}=G(k)_{x,+}$.

\begin{cor}
    Fix a nontrivial additive character $\chi: \FF_q \to \CC^\times$. The compactly induced representation $\pi_\lambda = \ind_{G(k)_{x,+}}^{G(k)} \chi_\lambda$ is an irreducible supercuspidal representation of $G(k)$.
\end{cor}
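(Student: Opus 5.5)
The plan is to combine the two facts just established: the intertwining computation $I(G,G(k)_{x,+},\chi_\lambda)=G(k)_{x,\lambda}$, and the stabiliser computation $G(k)_{x,\lambda}=G(k)_{x,+}$. Together they give
$$I(G,G(k)_{x,+},\chi_\lambda)=G(k)_{x,+}.$$
We then apply Lemma \ref{lem:mackey_basic} with $K=G(k)_{x,+}$ and $\tau=\chi_\lambda$. This lemma needs $K$ to contain $Z(k)$. Here $G$ is of type $G_2$, so it is simply connected and adjoint, and $Z$ is trivial; the hypothesis therefore holds automatically. Since $\chi_\lambda$ is a character, it is irreducible, and the lemma shows that $\pi_\lambda=\ind_{G(k)_{x,+}}^{G(k)}\chi_\lambda$ is irreducible.

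For supercuspidality we use Theorem \ref{thm:main}. The functional $\lambda$ was shown above to be $\FF_q$-stable, so $\pi_\lambda$ is a finite direct sum of irreducible supercuspidal representations. Since $\pi_\lambda$ is irreducible, it is itself one irreducible supercuspidal. Alternatively, one can invoke the second bullet of Lemma \ref{lem:mackey} with $J=G(k)_{x,+}$ and $H=G(k)_x$. Then $H_\chi=G(k)_{x,\lambda}=J$, so $\cH_\chi$ is one-dimensional and $\chi_\rho=\chi_\lambda$.

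The only step needing care is the identification $G(k)_{x,\lambda}=G(k)_{x,+}$. This rests on the claim that the stabiliser of $\chi_\lambda$ in $G(k)_{x,0}$ is detected by the action of $\GG_x(\FF_q)=G(k)_{x,0:0+}$ on $V(x)^\vee$. That claim holds because $G(k)_{x,+}$ acts trivially on $V(x)^\vee$: conjugation by $G(k)_{x,+}$ changes an element of $G(k)_{x,+}$ only by a commutator, and commutators are killed by $\chi_\lambda$. We also use $G(k)_x=G(k)_{x,0}$, which holds since $G$ is simply connected. With these points checked, the corollary follows directly.
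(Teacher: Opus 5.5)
Your proposal is correct and matches the paper's argument, which the paper leaves implicit. You combine $I(G,G(k)_{x,+},\chi_\lambda)=G(k)_{x,\lambda}$ with the stabiliser computation $G(k)_{x,\lambda}=G(k)_{x,+}$ and apply Lemma \ref{lem:mackey_basic} to get irreducibility, with supercuspidality following from Theorem \ref{thm:main} or equivalently from Lemma \ref{lem:mackey} with $H_\chi=J$.
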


\begin{prop}
    The representation $\pi_\lambda$ is epipelagic of depth $1/2$.
\end{prop}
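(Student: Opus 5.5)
The plan is to bound the depth of the irreducible representation $\pi_\lambda$ from both sides by $1/2$, and to exhibit a point $y$ with $r(y)=1/2$ at which $\pi_\lambda$ has nonzero $G(k)_{y,1/2+}$-fixed vectors. Both halves rest on the Mackey/Frobenius reciprocity computation from the proof of Proposition \ref{prop:epipelagic}.

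\emph{Lower bound.} Suppose $\pi_\lambda$ has a nonzero vector fixed by $G(k)_{y',r+}$ for some $y'\in\cB$ and $r\ge 0$. Run the argument of Proposition \ref{prop:epipelagic}: the Mackey decomposition, followed by the affine Bruhat decomposition (Proposition \ref{prop:bruhat}). This produces $y\in\cA$ and $g\in\GG_x(\FF_q)$ such that $g\cdot\lambda$ is trivial on $G(k)_{x,+}\cap G(k)_{y,r+}$. In particular, $\psi(y)\le r$ for every $\psi\in\supp(g\cdot\lambda)$. We showed above that $\supp(g\cdot\lambda)$ contains one of the six sets in (\ref{eqn:quad}). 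I will check that each of these sets contains either the pair $\{1-\alpha_0,\,3\alpha_1+2\alpha_2\}$ or the pair $\{1-\alpha_0+\alpha_2,\,3\alpha_1+\alpha_2\}$. Since $\alpha_0=3\alpha_1+2\alpha_2$, each pair sums to the constant function $1$. Evaluating at $y$ then gives $1\le 2r$, so $r\ge 1/2$. Hence $\pi_\lambda$ has depth at least $1/2$.

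\emph{Upper bound and the epipelagic point.} Take $y=\omega_2^\vee/2$, the vertex of $\overline\cC$ with $\alpha_1(y)=0$ and $\alpha_2(y)=1/2$. Since $\alpha_0(y)=1$, every affine root takes values in $\frac12\ZZ$ at $y$. Some affine root (for instance $\alpha_2$) takes the value $1/2$ there, so $\delta(y)=1/2$ and $r(y)=1/2$: that is, $G(k)_{y,+}=G(k)_{y,1/2}$ and $G(k)_{y,++}=G(k)_{y,1/2+}$. Directly, $\lambda$ itself is not trivial enough at $y$, since $(3\alpha_1+2\alpha_2)(y)=1$. Instead I will twist by a lift $n_{\alpha_2}\in G(k)_{x,0}$ of the reflection in $\alpha_2$. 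This reflection swaps $1-\alpha_0\leftrightarrow1-\alpha_0+\alpha_2$, $\alpha_1\leftrightarrow\alpha_1+\alpha_2$ and $3\alpha_1+\alpha_2\leftrightarrow3\alpha_1+2\alpha_2$. Therefore $\supp(n_{\alpha_2}\cdot\lambda)=\{1-\alpha_0+\alpha_2,\,1-\alpha_0,\,\alpha_1+\alpha_2,\,3\alpha_1+\alpha_2\}$, and all four roots take values $\le 1/2$ at $y$. Now $G(k)_{x,+}\cap G(k)_{y,1/2+}$ is generated by $T_1$ together with the $U_\psi$ satisfying $\psi(x)>0$ and $\psi(y)>1/2$. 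The character $\chi_{n_{\alpha_2}\cdot\lambda}$ is trivial on $T_1$ by construction (it factors through $V(x)$), and it is trivial on each such $U_\psi$ because $\cS(x)$ consists only of singletons here. So $\chi_\lambda$ is trivial on $G(k)_{x,+}\cap G(k)_{y,1/2+}^{\,n_{\alpha_2}}$. Reversing the Mackey computation then gives a nonzero $G(k)_{y,1/2+}$-fixed vector in $\pi_\lambda$.

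Combining the two bounds, and using the irreducibility of $\pi_\lambda$ from the preceding corollary, $\pi_\lambda$ has depth exactly $1/2=r(y)$ and a nonzero $G(k)_{y,r(y)+}$-fixed vector, so it is epipelagic. The step needing most care is the upper bound. There I must keep the conjugation conventions straight: the Mackey term is indexed by the double coset of $n_{\alpha_2}$, and I must confirm the triviality on $T_1$ and on the relevant root groups exactly as in the proof of Proposition \ref{prop:epipelagic}. I also need to double-check that no Moy--Prasad jump at $y$ occurs strictly between $0$ and $1/2$.
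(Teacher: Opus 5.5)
Your proposal is correct and follows the paper's proof: the lower bound uses exactly the pairs $\{\psi,1-\psi\}$ with $\psi=\alpha_0$ or $\psi=\alpha_0-\alpha_2$ extracted from the sets in (\ref{eqn:quad}), and for the upper bound the paper takes the point $z=\frac{1}{2}(\omega_1^\vee-\omega_2^\vee)$ with the untwisted $\lambda$. Since $z=w_{\alpha_2}(y)$ for your vertex $y=\frac{1}{2}\omega_2^\vee$, your twist by $n_{\alpha_2}$ is the same argument transported back into $\overline{\cC}$.
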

\begin{proof}
    By the proof of Proposition \ref{prop:epipelagic}, if $\pi_\lambda$ has depth $r$, then there exists $y \in \cA$ and $g \in \GG_x(\FF_q)$ such that $g\cdot \lambda$ is trivial on $G(k)_{x,+} \cap G(k)_{y,r+}$. For any $g$, by (\ref{eqn:quad}), the support of $g\cdot \lambda$ contains $\{\psi,1-\psi\}$ for one of $\psi=\alpha_0,\alpha_0-\alpha_2$. But for $g\cdot \lambda$ to be trivial on $G(k)_{x,+} \cap G(k)_{y,r+}$, we must then have that $\psi(y) \leq r$ and $(1-\psi)(y) \leq r$ so that $r \geq 1/2$.

    On the other hand, setting $z=\frac{1}{2}(\omega_1^\vee-\omega_2^\vee)$, we have $1-\alpha_0(z)=\alpha_0(z) = 1/2$, $\alpha_1(z)=1/2$ and $1-(3\alpha_1+\alpha_2)(z)=0 \leq 1/2$. Thus $\lambda$ is trivial on $G(k)_{z,\frac{1}{2}+}$ and so $\pi_\lambda$ has a nonzero fixed vector under $G(k)_{z,\frac{1}{2}+}$. Therefore, $\pi_\lambda$ is epipelagic of depth $1/2$.
\end{proof}

\begin{prop}
    The representation $\pi_\lambda$ does not arise from the construction of \cite{RY14}.
\end{prop}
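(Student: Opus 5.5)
The plan is to compare formal degrees, as announced in the introduction. First I will compute the formal degree of $\pi_\lambda$. We showed $I(G,G(k)_{x,+},\chi_\lambda)=G(k)_{x,\lambda}=G(k)_{x,+}$, so $\pi_\lambda=\ind_{G(k)_{x,+}}^{G(k)}\chi_\lambda$ is irreducible, compactly induced from a character of a compact open subgroup. By the standard formula (see \cite{BH1}), its formal degree with respect to a Haar measure $\mu$ is $1/\mu(G(k)_{x,+})$.

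Next I will list the Reeder--Yu representations of the same depth. By Proposition \ref{prop:epipelagic}, every $\pi(\mu',\rho)$ from \cite[Proposition 2.4]{RY14} attached to a point $y$ has depth $r(y)=\delta(y)$. Since $\pi_\lambda$ has depth $1/2$, we need $\delta(y)=1/2$ with $V_y^\vee$ admitting an $\FF_q$-rational stable functional. By \cite[Corollary 5.1]{RY14} and the $G_2$ classification (cf.\ \cite[Section 4.2]{Rom16}), the stable points of $G_2$ have $\delta(y)\in\{1/6,1/3,1/2\}$. Up to conjugacy, the point with $\delta(y)=1/2$ is the point $z$ with Kac coordinates $01\Rrightarrow 0$ (i.e.\ $\alpha_2(z)=1/2$, $\alpha_1(z)=0$), where $\GG_z$ is of type $A_1\times A_1$, i.e.\ $\mathrm{SO}_4$. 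I would double-check this against the point $z$ used in the depth computation above.

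I will then compare formal degrees. Proposition \ref{prop:RY}(3) gives
\[\deg_\mu\pi(\mu',\rho)=\frac{\dim\chi_{\mu',\rho}}{|A_{\mu'}|}\cdot\frac{1}{\mu(G(k)_{z,+})}.\]
Setting this equal to $1/\mu(G(k)_{x,+})$ requires
\[\frac{\dim\chi_{\mu',\rho}}{|A_{\mu'}|}=\frac{\mu(G(k)_{z,+})}{\mu(G(k)_{x,+})}.\]
Both measures can be computed from $\mu(G(k)_{y,0})=|\GG_y(\FF_q)|\,\mu(G(k)_{y,+})$, together with the fact that the Iwahori has index a product of Weyl-type $q$-polynomials in each parahoric. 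Here $\GG_x\cong\GL_2$ and $\GG_z$ is of type $A_1\times A_1$. Writing $I$ for the Iwahori, the ratio becomes $\mu(G(k)_{z,+})/\mu(G(k)_{x,+}) = |\GL_2(\FF_q)|/|\GG_z(\FF_q)|$ up to the Iwahori indices. Concretely, $\mu(G(k)_{x,+})=\mu(I)/q$ and $\mu(G(k)_{z,+})=\mu(I)/q^2$, so the ratio is $q^{-1}$, with $q$ a power of $3$.

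Finally, I need to show the left-hand side cannot equal this ratio. Since $\chi_{\mu',\rho}$ is $\chi_{\mu'}$-isotypic and appears in $\ind_{G(k)_{z,+}}^{G(k)_{z,\mu'}}\chi_{\mu'}$, we have $\dim\chi_{\mu',\rho}\ge 1$. The quotient $A_{\mu'}$ embeds into the finite stabiliser of $\mu'$ in $\GG_z(\FF_q)$ times $G(k)_z/G(k)_{z,0}$. So it suffices to show that $|A_{\mu'}|$ is prime to $3$, or at least never equals $q\cdot\dim\chi_{\mu',\rho}$. By \cite[Section 7]{RY14}, the stabilisers of stable vectors here are subgroups of the finite group attached to the regular elliptic class, of order dividing $|W|=12$, with $\dim\chi_{\mu',\rho}$ dividing $|A_{\mu'}|$. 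Hence $\dim\chi/|A|\ge 1/12$, and it is of the form $1/m$ with $m$ dividing $12$. Since $q\ge 3$ is a power of $3$, the only possibility is $q=3$ with $\dim\chi/|A|=1/3$, which I would have to rule out by checking the explicit stabiliser order from \cite{RY14}/\cite{Rom16} (whether $3$ can divide $|A_{\mu'}|$ with $\dim\chi=|A|/3$).

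The main obstacle is this last step: pinning down the exact stabiliser $A_{\mu'}$ and the possible $\dim\chi_{\mu',\rho}$ at the depth-$1/2$ point in residue characteristic $3$, and confirming that the measure-ratio computation is exactly $q^{-1}$.
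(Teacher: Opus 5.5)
Your setup matches the paper's. The formal degree of $\pi_\lambda$ is $1/\mu(G(k)_{x,+})$. The only competing Reeder--Yu point of depth $1/2$ is, up to conjugacy, the point $y$ with Kac coordinates $01\Rrightarrow 0$ (your $z$). And $\mu(G(k)_{y,+})=q^{-1}\mu(G(k)_{x,+})$; both indices should be measured inside the pro-unipotent radical $\cI_+$ rather than the Iwahori itself, but the ratio is unaffected. So an isomorphism $\pi_\lambda\cong\pi(\lambda',\rho)$ forces $|A_{\lambda'}|=q\cdot\dim\chi_{\lambda',\rho}$.

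However, your argument stops exactly where the content lies: you never show this equation is impossible. Your reduction to the case $q=3$ rests on the claim that $|A_{\lambda'}|$ divides $|W|=12$, imported from \cite[Section 7]{RY14}. That is not justified in residue characteristic $3$, a bad prime for $G_2$, where the characteristic-zero description of stabilisers of stable vectors cannot simply be assumed. Even granting it, $3$ divides $12$, so the case you leave open is precisely the one that must be excluded. As written, the proposal does not prove the proposition.

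The missing idea is to show that no element of order $3$ can stabilise a stable vector at $y$. This handles every $q$ at once and needs no bound on $|A_{\lambda'}|$.
\begin{enumerate}
\item From $|A_{\lambda'}|=q\cdot\dim\chi_{\lambda',\rho}$, the prime $3$ divides $|A_{\lambda'}|$.
\item Since $G(k)_y=G(k)_{y,0}$ for $G_2$, the group $A_{\lambda'}$ is a subgroup of the stabiliser of $\lambda'$ in $\GG_y(\FF_q)$. Cauchy's theorem then gives an element of order $3$ fixing the stable vector $\lambda'$.
\item $\GG_y$ is isogenous to $\SL_2\times\SL_2$ with kernel a $2$-group, so an order-$3$ element lifts to one of order $3$. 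You may therefore work in $\SL_2(\overline{\FF}_3)\times\SL_2(\overline{\FF}_3)$.
\item There, order-$3$ elements are unipotent and conjugate to $(1,u)$, $(u,1)$ or $(u,u)$, with $u=\begin{psmallmatrix}1&1\\0&1\end{psmallmatrix}$.
\item Use the identification $V_y^\vee\cong P_1\boxtimes P_3$ of \cite[Proposition 4.2]{Rom16}, and write $F=(aZ+bW)X^3+(cZ+dW)X^2Y+(eZ+fW)XY^2+(gZ+hW)Y^3$. A direct computation in characteristic $3$ shows that if $F$ is fixed by one of these three elements, then $d=f=0$ or $e=f=0$.
\item The discriminant factors as $\Delta(F)=H_6(F)^3G_6(F)$, with $H_6(F)\equiv c^3f^3-d^3e^3 \pmod 3$. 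So $H_6(F)$, and hence $\Delta(F)$, vanishes for such $F$. By \cite[Proposition 2.15]{Rom16}, $F$ is not stable.
\end{enumerate}
This contradiction, or an equivalent explicit determination of stabilisers of stable vectors in characteristic $3$, is what completes the proof.
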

\begin{proof}
    When $G=G_2$, by \cite[Theorem 4.2]{FR17}, the points $y \in \cA$ at which $V_y^\vee$ admits stable vectors are conjugate under the affine Weyl group to $\frac{1}{m} (\omega_1^\vee+\omega_2^\vee)$ for $m=2,3,6$. Under the construction of \cite{RY14}, these stable vectors give rise to epipelagic representations of depth $1/2,1/3$ and $1/6$ respectively, by Proposition \ref{prop:epipelagic}. The point $y\in \cA$ with Kac coordinates $01\Rrightarrow 0$ is conjugate to $\frac{1}{2}(\omega_1^\vee+\omega_2^\vee)$. As $\pi_\lambda$ has depth $1/2$, it suffices to show that $\pi_\lambda$ is not one of the representations $\pi_y(\lambda',\rho)$ where $\lambda' \in V_y^\vee(\FF_q)$ is a stable vector and $\rho \in \mathrm{Irr}(\cH_{\lambda'})$, as defined in Section \ref{sec:epipelagic}.

    Fix a Haar measure $\mu$ on $G(k)$. We have the containments 
    $$\cI_+ \supset G(k)_{x,+}, G(k)_{y,+} \supset \cI_{++}$$ where $\cI \supset \cI_+ \supset \cI_{++}$ is the Moy--Prasad filtration corresponding to the barycentre of $\cC$. The images of $G(k)_{x,+}$ and $G(k)_{y,+}$ in 
    $$\cI_+/\cI_{++} \cong \bigoplus\limits_{\psi \in\Pi} V_\psi$$ are given by $V_{\alpha_1} \oplus V_{1-\alpha_0}$ and $V_{\alpha_2}$. It follows that 
    $$\mu(G(k)_{y,+}) = q^{-1}\mu(G(k)_{x,+}) = q^{-2}\mu(\cI_+).$$
    Recall (for example, \cite[Section 7]{GR10}) that the formal degree with respect to $\mu$ of a compactly induced representation $\ind_J^{G(k)}V$ is $\dim V/\mu(J)$. By Proposition \ref{prop:RY}, if $\pi_\lambda$ is isomorphic to some $\pi(\lambda',\rho)$, then we must have $$\frac{|A_{\lambda'}|}{\dim \chi_{\lambda',\rho}} = q.$$
    In particular, 3 divides $|A_{\lambda'}|$. It suffices to prove that there is no order 3 element of $\GG_y(\overline\FF_q)$ that stabilises a stable vector in $V_y^\vee$. The reductive quotient $\GG_y$ is isogenous to $\SL_2 \times \SL_2$, whose centre is a 2-group, so we may replace $\GG_y$ by $\SL_2 \times \SL_2$. The order 3 elements of $\SL_2(\overline \FF_3) \times \SL_2(\overline \FF_3)$ are all conjugate to one of 
    \begin{equation}\label{eqn:order3}
        \left(\begin{pmatrix}
        1&0\\0&1
    \end{pmatrix}, \begin{pmatrix}
        1&1\\0&1
    \end{pmatrix}\right), \left(\begin{pmatrix}
        1&1\\0&1
    \end{pmatrix}, \begin{pmatrix}
        1&0\\0&1
    \end{pmatrix}\right), \left(\begin{pmatrix}
        1&1\\0&1
    \end{pmatrix}, \begin{pmatrix}
        1&1\\0&1
    \end{pmatrix}\right).
    \end{equation}
    As a representation of $\SL_2(\overline \FF_3) \times \SL_2(\overline \FF_3)$, $V_y^\vee$ is isomorphic to $(P_1 \boxtimes P_3)(\overline \FF_3)$, by \cite[Proposition 4.2]{Rom16}, where for any commutative ring $A$, $P_n(A)$ is the space of homogeneous degree-$n$ polynomials over $A$ in two variables, with $\SL_2(A)$ acting by 
    $$\begin{pmatrix}
        s&t\\u&w
    \end{pmatrix} \cdot f(X,Y) = f(sX+uY,tX+wY).$$
    Take an arbitrary element 
    $$F(X,Y,Z,W) = (aZ+bW)\otimes X^3 + (cZ+dW)\otimes X^2Y + (eZ+fW)\otimes XY^2 + (gZ+hW)\otimes Y^3 \in (P_1 \boxtimes P_3)(\overline \FF_3)$$
    where $a,b,c,d,e,f,g,h \in \overline \FF_3$. If $F$ is stabilised by one of the elements of (\ref{eqn:order3}), one checks that $d=f=0$ or $e=f=0$. By \cite[Proposition 2.15]{Rom16}, there is a polynomial $\Delta(F)$ in $a,\dots,h$ such that $F$ is stable for the action of $\SL_2(\overline \FF_3) \times \SL_2(\overline \FF_3)$ if and only if $\Delta(F)\neq 0$. Explicitly, there are polynomials $H_6(F)$ and $G_6(F)$ such that $\Delta(F)=H_6(F)^3G_6(F)$ and $H_6(F)$ reduces in characteristic 3 to $c^3f^3-d^3e^3$. We see that this is 0 for any $F$ stabilised by an element of order 3, so that such $F$ is not a stable vector.

\end{proof}
% using Jordan normal form

\begin{rem}
    One is naturally interested in describing the corresponding Langlands parameter $\varphi$ to $\pi_\lambda$. As the dual reductive group to $G_2$ is again $G_2$, $\varphi$ is a continuous homomorphism $\varphi: \cW_k \times \SL_2(\CC) \to G_2(\CC)$, where $\cW_k$ denotes the Weil group of $k$. The formal degree conjecture as reformulated in \cite[Conjecture 7.1]{GR10} (and originally due to \cite{HII08}) predicts that the formal degree of $\pi_\lambda$ matches numerical invariants attached to $\varphi$. In this remark we deduce some facts about $\varphi$, assuming the formal degree conjecture. For more details, see the forthcoming thesis of the author.

    Let $\pi_s$ denote a simple supercuspidal for $G_2(k)$ as constructed by \cite{GR10}, coming from a choice $\chi$ of `affine generic character', and let $\varphi_s$ be the corresponding parameter under the formal degree conjecture. In \cite[Section 9.5]{GR10}, Gross--Reeder show that $\varphi_s$ is a simple wild parameter, meaning that $\varphi_s$ is trivial on the $\SL_2$ factor, it has minimal adjoint Swan conductor given by the rank of $G_2$ (so $b(\varphi_s)=2$), and the image of the inertia subgroup of $\cW_k$ has finite centraliser in $G_2(\CC)$. Their argument relies on a compatibility assumption of unramified base change of the simple supercuspidals and restriction of the Langlands parameters. More precisely, if $k_m$ denotes the degree $m$ unramified extension of $k$, one can pull back $\chi$ under the trace map $\FF_{q^m} \to \FF_q$ to an affine generic character $\chi_m$ for $G_2(k_m)$, and hence produce a simple supercuspidal $\pi_{s,m}$ of $G_2(k_m)$. They make the assumption that the corresponding parameter to this pullback is the restriction of $\varphi_s$ to $\cW_{k_m}$.

    In our setting, we may similarly pullback an $\FF_q$-stable functional $\lambda$ to an $\FF_{q^m}$-stable functional $\lambda_m$ on $G_2(k_m)_{x,+:1}^\ab$ using the trace map $\FF_{q^m} \to \FF_q$. More precisely, $G_2(k_m)_{x,+:1}^\ab \cong V(x) \otimes_{\FF_q}\FF_{q^m}$ is a direct sum of distinguished lines $V_\psi \otimes_{\FF_q} \FF_{q^m}$ over $\psi \in \cS(x)$, and we take the sum of trace maps $V_\psi \otimes_{\FF_q} \FF_{q^m}\to V_\psi$. As the trace map on finite fields is nonzero, the support of $\lambda_m$ is the same as the support of $\lambda$. We therefore produce irreducible supercuspidals $\pi_{\lambda_m}$ where $\lambda_1=\lambda$. Replacing $\pi_{s,m}$ by $\pi_{\lambda_m}$ has the effect of multiplying the formal degree of $\pi_{s,m}$ by $q^m$. This comes from the fact that $G(k)_{x,+}$ is index $q$ in $\cI_+$ and that $G_2$ has trivial centre. We may then apply the same argument as in \cite[Section 9.5]{GR10} to deduce, under the analogous assumption that the corresponding parameter to $\pi_{\lambda_m}$ is the restriction of $\varphi$ to $\cW_{k_m}$, that $\varphi$ is trivial on the $\SL_2$ factor, the image of inertia has finite centraliser in $G_2(\CC)$, and the Swan conductor of $\varphi$ is given by $b(\varphi)=b(\varphi_s)+2=4$.
\end{rem}

\newpage

\bibliography{paper_references.bib}

@article{BT2,
author = {Bruhat, F. and Tits, J.},
journal = {Inst. Hautes \'{E}tudes Sci. Publ. Math.},
pages = {197-376},
title = {Groupes r\'{e}ductifs sur un corps local. II. Sch'{e}mas en groupes. Existence d'une donn\'{e}e radicielle valu\'{e}e},
volume = {60},
year = {1984},
}

@article {Bus90,
    AUTHOR = {Bushnell, Colin J.},
     TITLE = {Induced representations of locally profinite groups},
   JOURNAL = {J. Algebra},
  FJOURNAL = {Journal of Algebra},
    VOLUME = {134},
      YEAR = {1990},
    NUMBER = {1},
     PAGES = {104--114},
      ISSN = {0021-8693,1090-266X},
   MRCLASS = {22E50 (22D30)},
  MRNUMBER = {1068417},
MRREVIEWER = {Joe\ Repka},
       DOI = {10.1016/0021-8693(90)90213-8},
       URL = {https://doi.org/10.1016/0021-8693(90)90213-8},
}

@book{BH1,
      author        = "Bushnell, C. J. and Henniart, G.",
      title         = "{The Local Langlands Conjecture for GL(2)}",
      publisher     = "Springer Berlin",
      series        = "Grundlehren der mathematischen Wissenschaften",
      year          = "2006",
}

@book {BK93,
    AUTHOR = {Bushnell, Colin J. and Kutzko, Philip C.},
     TITLE = {The admissible dual of {${\rm GL}(N)$} via compact open
              subgroups},
    SERIES = {Annals of Mathematics Studies},
    VOLUME = {129},
 PUBLISHER = {Princeton University Press, Princeton, NJ},
      YEAR = {1993},
     PAGES = {xii+313},
      ISBN = {0-691-03256-4; 0-691-02114-7},
   MRCLASS = {22E50 (22-02)},
  MRNUMBER = {1204652},
MRREVIEWER = {Mark\ Reeder},
       DOI = {10.1515/9781400882496},
       URL = {https://doi.org/10.1515/9781400882496},
}

@article {BK94,
    AUTHOR = {Bushnell, Colin J. and Kutzko, Philip C.},
     TITLE = {The admissible dual of {${\rm SL}(N)$}. {II}},
   JOURNAL = {Proc. London Math. Soc. (3)},
  FJOURNAL = {Proceedings of the London Mathematical Society. Third Series},
    VOLUME = {68},
      YEAR = {1994},
    NUMBER = {2},
     PAGES = {317--379},
      ISSN = {0024-6115,1460-244X},
   MRCLASS = {22E50 (22E35)},
  MRNUMBER = {1253507},
MRREVIEWER = {Leticia\ Barchini},
       DOI = {10.1112/plms/s3-68.2.317},
       URL = {https://doi.org/10.1112/plms/s3-68.2.317},
}

@book{Car1,
      author        = "Carter, R. W.",
      title         = "{Finite Groups of Lie Type}",
      publisher     = "John Wiley and Sons",
      series        = "Pure and Applied Mathematics",
      year          = "1985",
}

@article {Dav54,
    AUTHOR = {Davis, Chandler},
     TITLE = {Theory of positive linear dependence},
   JOURNAL = {Amer. J. Math.},
  FJOURNAL = {American Journal of Mathematics},
    VOLUME = {76},
      YEAR = {1954},
     PAGES = {733--746},
      ISSN = {0002-9327,1080-6377},
   MRCLASS = {09.0X},
  MRNUMBER = {64011},
MRREVIEWER = {L.\ M.\ Blumenthal},
       DOI = {10.2307/2372648},
       URL = {https://doi.org/10.2307/2372648},
}

@article {Fin21,
    AUTHOR = {Fintzen, Jessica},
     TITLE = {Types for tame {$p$}-adic groups},
   JOURNAL = {Ann. of Math. (2)},
  FJOURNAL = {Annals of Mathematics. Second Series},
    VOLUME = {193},
      YEAR = {2021},
    NUMBER = {1},
     PAGES = {303--346},
      ISSN = {0003-486X,1939-8980},
   MRCLASS = {22E50},
  MRNUMBER = {4199732},
MRREVIEWER = {Corina\ Ciobotaru},
       DOI = {10.4007/annals.2021.193.1.4},
       URL = {https://doi.org/10.4007/annals.2021.193.1.4},
}

@article {FR17,
    AUTHOR = {Fintzen, Jessica and Romano, Beth},
     TITLE = {Stable vectors in {M}oy-{P}rasad filtrations},
   JOURNAL = {Compos. Math.},
  FJOURNAL = {Compositio Mathematica},
    VOLUME = {153},
      YEAR = {2017},
    NUMBER = {2},
     PAGES = {358--372},
      ISSN = {0010-437X,1570-5846},
   MRCLASS = {22E50 (11S37 14L24 20G25)},
  MRNUMBER = {3705228},
MRREVIEWER = {U.\ K.\ Anandavardhanan},
       DOI = {10.1112/S0010437X16008228},
       URL = {https://doi.org/10.1112/S0010437X16008228},
}

@misc{FS25,
      title={Construction of tame supercuspidal representations in arbitrary residue characteristic}, 
      author={Jessica Fintzen and David Schwein},
      year={2025},
      eprint={2501.18553},
      archivePrefix={arXiv},
      primaryClass={math.RT},
      url={https://arxiv.org/abs/2501.18553},
      note = {arXiv:2501.18553} 
}

@misc{Gas20,
      title={Shallow Characters and Supercuspidal Representations}, 
      author={Stella Sue Gastineau},
      year={2020},
      eprint={2011.00049},
      archivePrefix={arXiv},
      primaryClass={math.RT},
      url={https://arxiv.org/abs/2011.00049}, 
      note = {arXiv:2011.00049}
}

@article {GKM04,
    AUTHOR = {Goresky, Mark and Kottwitz, Robert and Macpherson, Robert},
     TITLE = {Homology of affine {S}pringer fibers in the unramified case},
   JOURNAL = {Duke Math. J.},
  FJOURNAL = {Duke Mathematical Journal},
    VOLUME = {121},
      YEAR = {2004},
    NUMBER = {3},
     PAGES = {509--561},
      ISSN = {0012-7094,1547-7398},
   MRCLASS = {14M15 (14L30)},
  MRNUMBER = {2040285},
MRREVIEWER = {Shrawan\ Kumar},
       DOI = {10.1215/S0012-7094-04-12135-9},
       URL = {https://doi.org/10.1215/S0012-7094-04-12135-9},
}

@article {GR10,
    AUTHOR = {Gross, Benedict H. and Reeder, Mark},
     TITLE = {Arithmetic invariants of discrete {L}anglands parameters},
   JOURNAL = {Duke Math. J.},
  FJOURNAL = {Duke Mathematical Journal},
    VOLUME = {154},
      YEAR = {2010},
    NUMBER = {3},
     PAGES = {431--508},
      ISSN = {0012-7094,1547-7398},
   MRCLASS = {11S37 (11S15 22E50)},
  MRNUMBER = {2730575},
MRREVIEWER = {Joshua\ M.\ Lansky},
       DOI = {10.1215/00127094-2010-043},
       URL = {https://doi.org/10.1215/00127094-2010-043},
}

@article {HII08,
    AUTHOR = {Hiraga, Kaoru and Ichino, Atsushi and Ikeda, Tamotsu},
     TITLE = {Formal degrees and adjoint {$\gamma$}-factors},
   JOURNAL = {J. Amer. Math. Soc.},
  FJOURNAL = {Journal of the American Mathematical Society},
    VOLUME = {21},
      YEAR = {2008},
    NUMBER = {1},
     PAGES = {283--304},
      ISSN = {0894-0347,1088-6834},
   MRCLASS = {22E50},
  MRNUMBER = {2350057},
       DOI = {10.1090/S0894-0347-07-00567-X},
       URL = {https://doi.org/10.1090/S0894-0347-07-00567-X},
}

@article {IM1,
    AUTHOR = {Iwahori, N. and Matsumoto, H.},
     TITLE = {On some {B}ruhat decomposition and the structure of the
              {H}ecke rings of {${\mathfrak p}$}-adic {C}hevalley groups},
   JOURNAL = {Inst. Hautes \'Etudes Sci. Publ. Math.},
  FJOURNAL = {Institut des Hautes \'Etudes Scientifiques. Publications
              Math\'ematiques},
    NUMBER = {25},
      YEAR = {1965},
     PAGES = {5--48},
      ISSN = {0073-8301,1618-1913},
   MRCLASS = {20.70 (14.50)},
  MRNUMBER = {185016},
MRREVIEWER = {Rimhak\ Ree},
       URL = {http://www.numdam.org/item?id=PMIHES_1965__25__5_0},
}

@article {Kim07,
    AUTHOR = {Kim, Ju-Lee},
     TITLE = {Supercuspidal representations: an exhaustion theorem},
   JOURNAL = {J. Amer. Math. Soc.},
  FJOURNAL = {Journal of the American Mathematical Society},
    VOLUME = {20},
      YEAR = {2007},
    NUMBER = {2},
     PAGES = {273--320},
      ISSN = {0894-0347,1088-6834},
   MRCLASS = {22E50 (20G25 22E35)},
  MRNUMBER = {2276772},
MRREVIEWER = {U.\ K.\ Anandavardhanan},
       DOI = {10.1090/S0894-0347-06-00544-3},
       URL = {https://doi.org/10.1090/S0894-0347-06-00544-3},
}

@article{Kut77,
 ISSN = {00029939, 10886826},
 URL = {http://www.jstor.org/stable/2041005},
 author = {P. C. Kutzko},
 journal = {Proceedings of the American Mathematical Society},
 number = {1},
 pages = {173--175},
 publisher = {American Mathematical Society},
 title = {Mackey's Theorem for Nonunitary Representations},
 urldate = {2025-01-28},
 volume = {64},
 year = {1977}
}

@article {Mor99,
    AUTHOR = {Morris, Lawrence},
     TITLE = {Level zero {$\bf G$}-types},
   JOURNAL = {Compositio Math.},
  FJOURNAL = {Compositio Mathematica},
    VOLUME = {118},
      YEAR = {1999},
    NUMBER = {2},
     PAGES = {135--157},
      ISSN = {0010-437X,1570-5846},
   MRCLASS = {22E50},
  MRNUMBER = {1713308},
MRREVIEWER = {Goran\ Mui\'c},
       DOI = {10.1023/A:1001019027614},
       URL = {https://doi.org/10.1023/A:1001019027614},
}

@article {Mum77,
    AUTHOR = {Mumford, David},
     TITLE = {Stability of projective varieties},
   JOURNAL = {Enseign. Math. (2)},
  FJOURNAL = {L'Enseignement Math\'ematique. Revue Internationale. 2e
              S\'erie},
    VOLUME = {23},
      YEAR = {1977},
    NUMBER = {1-2},
     PAGES = {39--110},
      ISSN = {0013-8584},
   MRCLASS = {14D20},
  MRNUMBER = {450272},
MRREVIEWER = {P.\ E.\ Newstead},
}

@article{MP1,
author = {Moy, A. and Prasad, G.},
journal = {Invent. Math.},
pages = {393-408},
title = "{Unrefined minimal $K$-types for $p$-adic groups}",
volume = {116},
year = {1994},
}

@article{MP2,
author = {Moy, A. and Prasad, G.},
journal = {Comment. Math. Helv.},
pages = {98-121},
title = "{Jacquet functors and unrefined minimal $K$-types}",
volume = {71},
year = {1996},
}

@article {RLYG12,
    AUTHOR = {Reeder, Mark and Levy, Paul and Yu, Jiu-Kang and Gross,
              Benedict H.},
     TITLE = {Gradings of positive rank on simple {L}ie algebras},
   JOURNAL = {Transform. Groups},
  FJOURNAL = {Transformation Groups},
    VOLUME = {17},
      YEAR = {2012},
    NUMBER = {4},
     PAGES = {1123--1190},
      ISSN = {1083-4362,1531-586X},
   MRCLASS = {17B20 (17B70)},
  MRNUMBER = {3000483},
MRREVIEWER = {Rutwig\ Campoamor-Stursberg},
       DOI = {10.1007/s00031-012-9196-3},
       URL = {https://doi.org/10.1007/s00031-012-9196-3},
}

@book {Rom16,
    AUTHOR = {Romano, Beth},
     TITLE = {On the local {L}anglands correspondence: {N}ew examples from
              the epipelagic zone},
      NOTE = {Thesis (Ph.D.)--Boston College},
 PUBLISHER = {ProQuest LLC, Ann Arbor, MI},
      YEAR = {2016},
     PAGES = {95},
      ISBN = {978-1339-68032-3},
   MRCLASS = {99-05},
  MRNUMBER = {3517860},
       URL =
              {http://gateway.proquest.com/openurl?url_ver=Z39.88-2004&rft_val_fmt=info:ofi/fmt:kev:mtx:dissertation&res_dat=xri:pqm&rft_dat=xri:pqdiss:10104498},
}

@article {RY14,
    AUTHOR = {Reeder, Mark and Yu, Jiu-Kang},
     TITLE = {Epipelagic representations and invariant theory},
   JOURNAL = {J. Amer. Math. Soc.},
  FJOURNAL = {Journal of the American Mathematical Society},
    VOLUME = {27},
      YEAR = {2014},
    NUMBER = {2},
     PAGES = {437--477},
      ISSN = {0894-0347,1088-6834},
   MRCLASS = {22E50 (11S15)},
  MRNUMBER = {3164986},
MRREVIEWER = {Marko\ Tadi\'c},
       DOI = {10.1090/S0894-0347-2013-00780-8},
       URL = {https://doi.org/10.1090/S0894-0347-2013-00780-8},
}

@article {SS08,
    AUTHOR = {S\'echerre, V. and Stevens, S.},
     TITLE = {Repr\'esentations lisses de {${\rm GL}_m(D)$}. {IV}.
              {R}epr\'esentations supercuspidales},
   JOURNAL = {J. Inst. Math. Jussieu},
  FJOURNAL = {Journal of the Institute of Mathematics of Jussieu. JIMJ.
              Journal de l'Institut de Math\'ematiques de Jussieu},
    VOLUME = {7},
      YEAR = {2008},
    NUMBER = {3},
     PAGES = {527--574},
      ISSN = {1474-7480,1475-3030},
   MRCLASS = {22E50},
  MRNUMBER = {2427423},
       DOI = {10.1017/S1474748008000078},
       URL = {https://doi.org/10.1017/S1474748008000078},
}

@book {Ste16,
    AUTHOR = {Steinberg, Robert},
     TITLE = {Lectures on {C}hevalley groups},
    SERIES = {University Lecture Series},
    VOLUME = {66},
   EDITION = {corrected},
      NOTE = {Notes prepared by John Faulkner and Robert Wilson,
              With a foreword by Robert R. Snapp},
 PUBLISHER = {American Mathematical Society, Providence, RI},
      YEAR = {2016},
     PAGES = {xi+160},
      ISBN = {978-1-4704-3105-1},
   MRCLASS = {20G15 (51F15)},
  MRNUMBER = {3616493},
       DOI = {10.1090/ulect/066},
       URL = {https://doi.org/10.1090/ulect/066},
}

@article {Ste08,
    AUTHOR = {Stevens, Shaun},
     TITLE = {The supercuspidal representations of {$p$}-adic classical
              groups},
   JOURNAL = {Invent. Math.},
  FJOURNAL = {Inventiones Mathematicae},
    VOLUME = {172},
      YEAR = {2008},
    NUMBER = {2},
     PAGES = {289--352},
      ISSN = {0020-9910,1432-1297},
   MRCLASS = {22E50},
  MRNUMBER = {2390287},
MRREVIEWER = {Anne-Marie\ H.\ Aubert},
       DOI = {10.1007/s00222-007-0099-1},
       URL = {https://doi.org/10.1007/s00222-007-0099-1},
}

@incollection {Tit79,
    AUTHOR = {Tits, J.},
     TITLE = {Reductive groups over local fields},
 BOOKTITLE = {Automorphic forms, representations and {$L$}-functions
              ({P}roc. {S}ympos. {P}ure {M}ath., {O}regon {S}tate {U}niv.,
              {C}orvallis, {O}re., 1977), {P}art 1},
    SERIES = {Proc. Sympos. Pure Math.},
    VOLUME = {XXXIII},
     PAGES = {29--69},
 PUBLISHER = {Amer. Math. Soc., Providence, RI},
      YEAR = {1979},
      ISBN = {0-8218-1435-4},
   MRCLASS = {20G25 (20G10)},
  MRNUMBER = {546588},
}

@article {Yu01,
    AUTHOR = {Yu, Jiu-Kang},
     TITLE = {Construction of tame supercuspidal representations},
   JOURNAL = {J. Amer. Math. Soc.},
  FJOURNAL = {Journal of the American Mathematical Society},
    VOLUME = {14},
      YEAR = {2001},
    NUMBER = {3},
     PAGES = {579--622},
      ISSN = {0894-0347,1088-6834},
   MRCLASS = {22E50},
  MRNUMBER = {1824988},
MRREVIEWER = {Bertrand\ Lemaire},
       DOI = {10.1090/S0894-0347-01-00363-0},
       URL = {https://doi.org/10.1090/S0894-0347-01-00363-0},
}
\bibliographystyle{amsalpha}

\end{document}